\tikzset{
  symbol/.style={
    draw=none,
    every to/.append style={
      edge node={node [sloped, allow upside down, auto=false]{$#1$}}}
  }
}
\tikzset{
  LA/.style = {line width=#1, -{Straight Barb[length=3pt]}},
         LA/.default=1pt
}
\definecolor{verydarkblue}{rgb}{0,0,0.4}
\newcommand{\noproof}{\hfill\qedsymbol}
\renewcommand{\bar}{\overline}
\newcommand{\cv}{Chevalley-Bruhat\xspace}
\newcommand{\into}{\hookrightarrow}
\newcommand{\dbar}{\bar{\partial}}
\newcommand{\R}{\mathbb{R}}
\newcommand{\C}{\mathbb{C}}
\newcommand{\B}{\mathrm{B}}
\newcommand{\F}{\mathcal{F}}
\newcommand{\A}{\mathcal{A}}
\newcommand{\uA}{\underline{\A}}
\newcommand{\W}{\mathcal{W}}
\newcommand{\KS}{\mathcal{KS}}
\newcommand{\lv}{\prescript{v\!}{}{}}
\newcommand{\lh}{\prescript{h\!}{}{}}
\renewcommand{\L}{\mathcal{L}}
\DeclareMathOperator{\coker}{coker}
\DeclareMathOperator{\Aut}{Aut}
\DeclareMathOperator{\hdim}{Hdim}
\DeclareMathOperator{\Diff}{Diff}
\DeclareMathOperator{\Mod}{Mod}
\renewcommand{\O}{\mathcal{O}}
\newcommand{\CP}{\mathbb{P}_\C}
\newcommand{\T}{\mathcal{T}}
\newcommand{\X}{\mathcal{X}}
\newcommand{\Y}{\mathcal{Y}}
\newcommand{\uY}{\underline{\mathcal{Y}}}
\renewcommand{\tilde}[1]{\widetilde{#1}}
\renewcommand{\Tilde}{\tilde}
\renewcommand{\sl}{\mathfrak{sl}}
\renewcommand{\phi}{\varphi}
\newcommand*{\shifttext}[2]{%
  \settowidth{\@tempdima}{#2}%
  \makebox[\@tempdima]{\hspace*{#1}#2}%
}
\newsavebox{\foobox}
\newcommand{\g}{\mathfrak{g}}
\newcommand{\z}{\mathfrak{z}}
\renewcommand{\leq}{\leqslant}
\renewcommand{\geq}{\geqslant}
\renewcommand{\rho}{\varrho}
\newcommand{\SO}{\mathrm{SO}}
\newcommand{\PSL}{\mathrm{PSL}}
\newcommand{\Hom}{\mathrm{Hom}}
\newcommand{\Lie}{\mathrm{Lie}}
\newcommand{\Ad}{\mathrm{Ad}}
\renewcommand{\hat}{\widehat}
\newcommand{\colorboxed}[1]{%
  \begingroup
    \colorlet{cb@saved}{.}%
    \color{red}%
    \boxed{%
      \color{cb@saved}%
      #1%
    }%
  \endgroup
}
\newcommand{\param}{*}
\numberwithin{equation}{section}
\theoremstyle{plain}
\newtheorem{thm}{Theorem}[section]
\newtheorem{cor}[thm]{Corollary}
\newtheorem{prop}[thm]{Proposition}
\newtheorem{defn}[thm]{Definition}
\theoremstyle{definition}
\newtheorem{example}{Example}
\theoremstyle{definition}
\newtheorem*{remark}{Remark}
\newtheorem*{remarksenv}{Remarks}
\newenvironment{rmenumerate}{\begin{enumerate}}{\end{enumerate}}
\begin{document}

\title{Uniformization of compact complex manifolds by Anosov homomorphisms}

\author{David Dumas and Andrew Sanders}

\date{June 4, 2021\footnote{Release history: v1: October 10, 2020.  v2 (this version): June 4, 2021.}}

\maketitle

\section{Introduction}

In this paper we study complex structures on compact manifolds that arise as quotients of domains in complex flag varieties by Anosov subgroups.
The central question we consider is one of \emph{uniformization}, that is, determining which complex structures on these smooth manifolds arise as a quotient of such a domain in the flag variety.  We obtain infinitesimal and local uniformization results in cases where the Anosov subgroups have limit sets of sufficiently small Riemannian Hausdorff dimension.

The prototype for this program is the Bers Simultaneous Uniformization Theorem \cite{BER60}---a global result with no Hausdorff dimension hypothesis---and so we begin by recalling its statement.

Let $S$ be a closed, connected, oriented $2$-dimensional manifold, and let $\Gamma = \pi_1(S)$.
A discrete injective homomorphism $\rho: \Gamma\rightarrow \PSL(2, \C)$ is \emph{quasi-Fuchsian} if there exists a $\rho$-equivariant continuous injective map $\xi_{\rho}: \partial \Gamma\rightarrow \CP^{1}$.
The complement of the Jordan curve $\xi_{\rho}(\partial \Gamma)$ is a pair of $\rho(\Gamma)$-invariant topological disks, which yields a pair of quotient Riemann surfaces $Y_\rho^\pm$.
The pair $(Y_\rho^+, Y_\rho^-)$ determines a point in the Teichm\"uller space $\T(S \cup \bar{S}) = \T(S) \times \T(\bar{S})$, where $\bar{S}$ denotes $S$ with the opposite orientation.

The set of quasi-Fuchsian homomorphisms $\mathcal{QF}_{S}\subset \Hom(\Gamma, \PSL(2, \C))$ is a non-empty smooth subset that is invariant under the action of $\PSL(2, \C)$ by conjugation.
The action of $\PSL(2, \C)$ on $\mathcal{QF}_{S}$ is free and proper, giving a quotient complex manifold $\underline{\mathcal{QF}}_{S}$, and the map $\rho \mapsto (Y_\rho^+,Y_\rho^-)$ descends to give a \emph{classifying map} $\underline{\mathcal{QF}}_{S} \to \T(S \cup \bar{S})$.

Bers' Simultaneous Uniformization Theorem asserts that this classifying map is a biholomorphism.
Equivalently, working at the level of homomorphisms, the map $\mathcal{QF}_{S} \to \mathcal{T}(S \cup \bar{S})$, $\rho \mapsto (Y_\rho^+,Y_\rho^-)$ is a surjective submersion whose fibers are $\PSL(2,\C)$-orbits.
It follows that any deformation of complex structure on $(Y_\rho^+,Y_\rho^-)$ can be lifted to a deformation of quasi-Fuchsian homomorphisms.

Replacing $\PSL(2,\C) = \Aut(\CP^1)$ with the automorphism group $G$ of a higher dimensional complex flag variety, the set of \emph{Anosov homomorphisms} provides a natural generalization of quasi-Fuchsian homomorphisms (see e.g. \cite{LAB06}\cite{GW12}\cite{KLP13}).
Precisely, given a complex semisimple Lie group $G$, a symmetric parabolic subgroup $P_A<G,$ and a torsion-free word hyperbolic group $\Gamma,$ there exists a (potentially empty) open subset $\A\subset \Hom(\Gamma, G)$ of \emph{$P_A$-Anosov homomorphisms}.
Every $\rho\in \A$ is discrete, injective, and admits a unique $\rho$-equivariant continuous injective \emph{limit map} $\xi_{\rho}: \partial \Gamma\rightarrow G/P_A.$

Now, denoting by $\F=G/P_{D}$ a complex flag variety of $G$, where $P_{D}<G$ is a parabolic subgroup,
let $\rho: \Gamma\rightarrow G$ be a $P_{A}$-Anosov homomorphism.
Given a subset $I$ of the Weyl group $W = W(G)$ satisfying certain conditions (elaborated on in \autoref{subsec:domains}), the work of Kapovich-Leeb-Porti \cite{KLP13} yields a domain $\Omega_{\rho}^{I}\subset \F$ on which $\rho(\Gamma)$ acts freely, properly discontinuously, and cocompactly.  The subset $I\subset W$ satisfying these properties is called a \emph{balanced ideal of type $(P_{A}, \F)$}.

Allowing $\rho$ to vary, and restricting attention to the set $\A_I$ of representations where the domain $\Omega_\rho^I$ is nonempty, the quotients of these domains give a family of compact complex manifolds $\W^{I}\rightarrow \A_I$, which we call the \emph{Anosov family}.
When $G=\PSL(2, \C)$ and $\Gamma=\pi_{1}(S)$ is a surface group, then $\A=\mathcal{QF}_{S}$ and there is a unique choice for $I;$ the resulting family consists of the pairs $(Y_\rho^+,Y_\rho^-)$ for $\rho \in \mathcal{QF}_{S}$.

For $G$ of higher rank, there are many possible choices for $\F$ and $I$, and hence many (a priori) distinct Anosov families that can be seen as generalizations of the quasi-Fuchsian case.
In light of Bers' theorem, it is natural to ask whether these families also realize all possible deformations of complex structure of their fibers, either infinitesimally, locally, or globally.

Given a $P_{A}$-Anosov homomorphism $\rho: \Gamma\rightarrow G$ and $I \subset W$ a balanced ideal of type $(P_{A}, \F),$ we call the pair $(\rho, I)$ a \emph{thickened Anosov homomorphism}.  
If the associated domain $\Omega_{\rho}^{I}\subset \F$ is the complement of a closed set with Riemannian Hausdorff codimension at least $4$,
we say that the thickened Anosov homomorphism $(\rho,I)$ is \emph{$4$-small}; see \autoref{sec:prelim} for a detailed definition.

In this setting, we first establish an infinitesimal analogue of Bers' theorem.
This is most naturally phrased in terms of surjectivity of the Kodaira-Spencer map, which measures first-order changes in the complex structure of $\W_{\rho}^{I}$ via the first cohomology group of the tangent sheaf $\Theta_{\W_\rho^I}$.

\begin{thm}
\label{thm:main-infinitesimal}
Let $\W^I \to \A_I$ denote the Anosov family of compact complex manifolds associated to ($\mathcal{F}$, $G$, $P_A$, $I$) as above, and suppose $G = \Aut(\F)$.
If $(\rho,I)$ is $4$-small, then:

\begin{rmenumerate}
\item The Anosov family realizes all first-order deformations: The Kodaira-Spencer map $\KS_{\rho}: T_{\rho}\A\rightarrow H^{1}(\W_{\rho}^{I}, \Theta_{\W_{\rho}^{I}})$ is surjective.

\item Infinitesimal $G$-conjugations are the only trivial first-order deformations in the Anosov family: Under the natural isomorphism $T_\rho\A \simeq Z^1(\Gamma,\g_\rho)$, where $\g_\rho$ denotes $\g=\Lie(G)$ with the $\Gamma$-module structure from $\Ad \circ \rho$, the kernel of $\KS_\rho$ is isomorphic to $B^{1}(\Gamma, \g_{\rho})$.

\item All infinitesimal automorphisms of $\W_\rho^I$ arise from the infinitesimal centralizer of $\rho$:  There is an isomorphism $H^{0}(\W_{\rho}^{I}, \Theta_{\W_{\rho}^{I}}) \simeq H^{0}(\Gamma, \g_{\rho}) = \Lie(Z_G(\rho(\Gamma))$.
\end{rmenumerate}
\end{thm}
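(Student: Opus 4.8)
The plan is to compute $H^{\bullet}(\W_\rho^I, \Theta_{\W_\rho^I})$ by means of a Cartan--Leray spectral sequence for the Galois covering $\Omega_\rho^I \to \W_\rho^I$ (deck group $\Gamma$ acting through $\rho$), to collapse the coefficients to constant sheaves using $4$-smallness together with standard facts about flag varieties, and then to identify the Kodaira--Spencer map of the Anosov family with an edge homomorphism of that spectral sequence. I would begin by noting that, since $\F$ is simply connected and the closed complement $Z_\rho := \F \setminus \Omega_\rho^I$ has real codimension at least $4$, the domain $\Omega_\rho^I$ is connected and simply connected; hence $\Omega_\rho^I$ is the universal cover of $\W_\rho^I$ and $\pi_1(\W_\rho^I) \cong \Gamma$ via $\rho$. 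The sheaf $\Theta_{\W_\rho^I}$ is the quotient of $\Theta_\F|_{\Omega_\rho^I}$, carrying the $\Gamma$-equivariant structure induced by $\rho$ and the $G$-action on $\Theta_\F$, so there is a spectral sequence
\[
E_2^{p,q} \;=\; H^p\!\left(\Gamma,\, H^q\!\left(\Omega_\rho^I,\, \Theta_\F|_{\Omega_\rho^I}\right)\right) \;\Longrightarrow\; H^{p+q}\!\left(\W_\rho^I,\, \Theta_{\W_\rho^I}\right).
\]

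Next I would compute the coefficient cohomology. Applying the removable-singularity / extension results of \autoref{sec:prelim} to the holomorphic vector bundle $\Theta_\F$ across the codimension-$\geq 4$ set $Z_\rho$ yields restriction isomorphisms $H^q(\F, \Theta_\F) \xrightarrow{\,\sim\,} H^q(\Omega_\rho^I, \Theta_\F|_{\Omega_\rho^I})$ for $q = 0, 1$. Then $H^0(\F, \Theta_\F) = \Lie \Aut^0(\F) = \g$ since $G = \Aut(\F)$ (Demazure), and as a $\Gamma$-module it is exactly $\g_\rho$; while $H^1(\F, \Theta_\F) = 0$ by the infinitesimal rigidity of flag varieties (Bott). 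Thus $E_2^{p,0} = H^p(\Gamma, \g_\rho)$ and $E_2^{p,1} = 0$. The corner term is never touched by differentials, so $H^0(\W_\rho^I, \Theta_{\W_\rho^I}) = H^0(\Gamma, \g_\rho) = \g^{\Ad \rho(\Gamma)} = \Lie Z_G(\rho(\Gamma))$, which is (iii). Since the row $q = 1$ vanishes, the five-term exact sequence degenerates and the edge homomorphism gives an isomorphism $H^1(\Gamma, \g_\rho) = E_2^{1,0} = E_\infty^{1,0} \xrightarrow{\,\sim\,} H^1(\W_\rho^I, \Theta_{\W_\rho^I})$.

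It then remains to identify $\KS_\rho$. I would argue that, under $T_\rho\A \cong Z^1(\Gamma, \g_\rho)$, the composite of $\KS_\rho$ with the isomorphism just obtained is the tautological projection $Z^1(\Gamma, \g_\rho) \twoheadrightarrow H^1(\Gamma, \g_\rho)$; granting this, (i) is the surjectivity of that projection and (ii) says its kernel is $B^1(\Gamma, \g_\rho)$. The input is that every fiber $\W_{\rho'}^I$ of the Anosov family is a quotient of a domain in the \emph{fixed} flag variety $\F$ by $\Gamma$ acting through $\rho'$, and that structural stability of Anosov homomorphisms trivializes the family smoothly near $\rho$; a tangent vector represented by a cocycle $\dot\rho \in Z^1(\Gamma, \g_\rho)$ then deforms the $\Gamma$-action on $\Omega_\rho^I$ infinitesimally by $\gamma \mapsto$ the holomorphic vector field on $\F$ associated to $\dot\rho(\gamma)\rho(\gamma)^{-1} \in \g = H^0(\F, \Theta_\F)$. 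The corresponding $\Gamma$-cocycle valued in $H^0(\Omega_\rho^I, \Theta_\F|_{\Omega_\rho^I})$ is $\dot\rho$ itself under the identification $\g_\rho = H^0(\F, \Theta_\F)$, and by the standard description of the Kodaira--Spencer class of a family of quotients by a varying group action, $\KS_\rho(\dot\rho)$ is the image of this cocycle under the edge homomorphism $H^1(\Gamma, H^0(\Omega_\rho^I, \Theta_\F|_{\Omega_\rho^I})) \to H^1(\W_\rho^I, \Theta_{\W_\rho^I})$ --- which is precisely projection to $H^1(\Gamma, \g_\rho)$ followed by the isomorphism above.

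I expect this last identification to be the main obstacle: one must construct the Anosov family and a smooth trivialization near $\rho$ carefully enough to match the analytically-defined Kodaira--Spencer map with the algebraically-defined edge map, and in particular check that the $H^0(\F,\Theta_\F)$-valued cocycle extracted from a deformation of $\rho$ really is $\dot\rho$, with no inversion or sign discrepancy in the $\Gamma$-action. The remaining ingredients --- the Cartan--Leray spectral sequence, the values $H^0(\F, \Theta_\F) = \g$ and $H^1(\F, \Theta_\F) = 0$, and the removable-singularity statements across codimension-$4$ sets --- are standard or already established.
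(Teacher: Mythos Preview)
Your proposal is correct and follows essentially the same route as the paper: the Cartan--Leray/Grothendieck spectral sequence for the covering $\Omega_\rho^I \to \W_\rho^I$, collapse of the $q=0,1$ coefficients via Harvey's extension theorem together with $H^0(\F,\Theta_\F)=\g$ and Bott's $H^1(\F,\Theta_\F)=0$, and identification of $\KS_\rho$ with the edge map. The paper resolves precisely the ``main obstacle'' you flag---matching the analytically defined Kodaira--Spencer map with the edge homomorphism---by constructing an explicit morphism of double complexes from the two-column complex arising from the Kodaira--Spencer short exact sequence into the Grothendieck double complex $C^p(\Gamma,A^{0,q}(\Omega_\rho^I,\Theta_{\Omega_\rho^I}))$, using the horizontal/vertical splitting of $\Theta_{\Omega^I}$ coming from $\Omega^I \subset \F \times \A_I$; this is the content of \autoref{thm:pent}. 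Two small points: the extension results you need (Harvey) appear in \autoref{sec:small}, not \autoref{sec:prelim}; and the paper does not use that $\Omega_\rho^I$ is simply connected, only that it is a regular $\Gamma$-cover of $\W_\rho^I$.
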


In essence, \autoref{thm:main-infinitesimal} asserts that when $(\rho, I)$ is $4$-small, the first order deformation theory of the homomorphism $\rho$ is equivalent to the first order deformation theory of the associated complex manifold $\W_{\rho}^{I}$; complex-analytic deformation questions thus correspond to representation-theoretic ones.

In particular this gives a characterization of Anosov representations with complex-analytically rigid quotient manifolds:
\begin{cor}
\label{cor:main-infinitesimal}
Let $(\rho, I)$ be a thickened Anosov homomorphism of type $(P_{A}, \F)$ such that $(\rho, I)$ is $4$-small. Then the complex manifold $\W_{\rho}^{I}$ is infinitesimally rigid if and only if the  homomorphism $\rho: \Gamma\rightarrow G$ is infinitesimally rigid modulo conjugation.
\end{cor}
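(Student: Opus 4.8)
The plan is to deduce this directly from \autoref{thm:main-infinitesimal}, since the corollary is essentially a restatement of parts (i) and (ii) in rigidity language. First I would pin down the two notions of rigidity involved. By definition $\W_\rho^I$ is \emph{infinitesimally rigid} exactly when $H^1(\W_\rho^I, \Theta_{\W_\rho^I}) = 0$. On the other side, $\rho$ is \emph{infinitesimally rigid modulo conjugation} exactly when every infinitesimal deformation of $\rho$ is tangent to the $G$-conjugation orbit of $\rho$; under the identification $T_\rho\A \simeq Z^1(\Gamma,\g_\rho)$ the conjugation-tangent deformations are precisely the coboundaries $B^1(\Gamma,\g_\rho)$, so this condition reads $Z^1(\Gamma,\g_\rho) = B^1(\Gamma,\g_\rho)$, that is, $H^1(\Gamma,\g_\rho) = 0$.

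Next I would combine the two parts of \autoref{thm:main-infinitesimal}. The hypothesis that $(\rho,I)$ is $4$-small is exactly what is assumed in the corollary, so part (i) gives surjectivity of the linear map $\KS_\rho : T_\rho\A \to H^1(\W_\rho^I, \Theta_{\W_\rho^I})$, and part (ii) identifies $\ker\KS_\rho$ with $B^1(\Gamma,\g_\rho)$ under $T_\rho\A \simeq Z^1(\Gamma,\g_\rho)$. The first isomorphism theorem applied to $\KS_\rho$ then yields a canonical isomorphism
\[
H^1(\W_\rho^I, \Theta_{\W_\rho^I}) \;\simeq\; T_\rho\A \big/ \ker\KS_\rho \;\simeq\; Z^1(\Gamma,\g_\rho)\big/B^1(\Gamma,\g_\rho) \;=\; H^1(\Gamma,\g_\rho).
\]

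Finally I would read off the equivalence: the left-hand side vanishes if and only if the right-hand side does, and by the first paragraph this says precisely that $\W_\rho^I$ is infinitesimally rigid if and only if $\rho$ is infinitesimally rigid modulo conjugation. There is no real obstacle here, as the corollary is a formal consequence of \autoref{thm:main-infinitesimal}; the only point that warrants care is the bookkeeping in the first paragraph, namely verifying that the subspace of $T_\rho\A$ consisting of deformations that merely conjugate $\rho$ corresponds, under the natural isomorphism $T_\rho\A \simeq Z^1(\Gamma,\g_\rho)$, to exactly the coboundary subspace $B^1(\Gamma,\g_\rho)$ appearing in part (ii), so that ``infinitesimally rigid modulo conjugation'' is translated faithfully as $H^1(\Gamma,\g_\rho) = 0$.
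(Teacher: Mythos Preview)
Your proposal is correct and follows essentially the same approach as the paper: the paper's proof simply invokes the isomorphism $H^1(\Gamma,\g_\rho)\simeq H^1(\W_\rho^I,\Theta_{\W_\rho^I})$ from \autoref{thm:completeness} (equivalently, parts (i) and (ii) of \autoref{thm:main-infinitesimal}) and observes that one side vanishes if and only if the other does. Your write-up is slightly more detailed in spelling out the translation between the rigidity conditions and the vanishing of the relevant cohomology groups, but the argument is the same.
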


Using \autoref{cor:main-infinitesimal} in conjunction with some rigidity results in the theory of discrete groups, in \autoref{ex:rigid} we produce new examples of rigid complex manifolds.  
To place this result in context, we note that it was a long standing conjecture of Kodaira that if a compact K\"{a}hler manifold $X$ is rigid, then $X$ is projective.  This was eventually disproved by Voisin \cite{VOI06}.
In \cite{DS20}, the authors showed that the manifolds studied here are \emph{not} K\"{a}hler, hence our examples are of a complementary nature.

Next we move from infinitesimal to local statements, working with the generalized Teichm\"uller spaces of the complex manifolds $\W_{\rho}^{I}$.  Recall that if $Y$ is a closed, oriented smooth manifold, the Teichm\"{u}ller space $\mathcal{T}(Y)$ of $Y$ is the space of all complex 
structures on $Y$ compatible with its orientation, modulo the group of diffeomorphisms isotopic to the identity.
In contrast to the Riemann surface case, the Teichm\"uller spaces of higher-dimensional manifolds are often pathological (e.g.~not locally Hausdorff).
Given a family of compact complex manifolds $\Y\rightarrow B$ whose (smooth) structure group is reduced to $\textnormal{Diff}_{0}$ and any $b\in B,$ there is a natural \emph{classifying map} $B\rightarrow \mathcal{T}(\Y_{b})$ which records the marked complex structures of the fibers.

In studying this map for the Anosov family, it is natural to consider the quotient of the set $\A$ of Anosov representations by the conjugation action of $G$.
This introduces significant complications, as the action is in general neither free nor proper.
Nevertheless, we find reasonable conditions (including the $4$-small condition) under which the quotient of $\A$ is locally isomorphic to Teichm\"uller space.
When $\Gamma$ is a surface group, we obtain the following result:

\begin{thm}
\label{thm:main-local}
Let $S$ be a closed orientable surface and $\Gamma=\pi_{1}S.$ Let $G$ be a complex simple adjoint Lie group not of type $A_{1}, A_{2}, A_{3}$ or $B_{2},$ and $I$ any balanced ideal of type $(P_{A}, \F)$ such that that $G$-quasi-Fuchsian and $G$-Hitchin representations give rise to nonempty domains $\Omega_\rho^I$.  Then there exists a non-empty connected open $G$-invariant manifold $\mathcal{U}\subset \A\subset \Hom(\Gamma, G)$ such that
\begin{rmenumerate}
\item  The set $\mathcal{U}$ contains all $G$-quasi-Fuchsian homomorphisms.  If we further omit types $F_{4}, E_{6}, E_{7}, E_{8},$ then $\mathcal{U}$ also contains all $G$-Hitchin homomorphisms.
\item The $G$-action on $\mathcal{U}$ is free and proper, with quotient manifold $\underline{\mathcal{U}}\subset \uA$ that is $1$-connected.  
\item For any $\rho\in \mathcal{U},$ the classifying map $f: \underline{\mathcal{U}}\rightarrow \T(\W_{\rho}^{I})$ of the family $\W^I$
is open, locally surjective, and is a homeomorphism onto its image.
\item There is a commutative diagram of holomorphic maps
\begin{center}
\begin{tikzcd}
\underline{\mathcal{QF}}_{S} \arrow{r} \arrow{d}
&\T(S) \times \T(\overline{S}) \arrow{d} \\
\underline{\mathcal{U}} \arrow{r} 
& \T(\W_{\rho}^{I}).
\end{tikzcd}
\end{center}
where the top horizontal arrow is the simultaneous uniformization map.
\end{rmenumerate}
\end{thm}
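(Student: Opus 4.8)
The plan is to treat part~(iii) as the core of the theorem and derive it by combining \autoref{thm:main-infinitesimal} with a Torelli-type injectivity statement for the classifying map, with parts~(i) and~(ii) providing the open set $\mathcal{U}$ on which the infinitesimal theorem applies and the $G$-action is tame, and part~(iv) following essentially by construction.

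To establish (i) and (ii): the $G$-quasi-Fuchsian locus is open, $G$-invariant, and connected in $\A$ (it fibres over $\T(S)\times\T(\bar S)$), and likewise for the $G$-Hitchin locus; since $G$-Fuchsian homomorphisms lie both in the closure of the quasi-Fuchsian set and in the Hitchin component, and the two loci overlap in a connected set, their union sits in a single connected component of $\A$. The crucial point is that quasi-Fuchsian and Hitchin homomorphisms are $4$-small: the complement $\F\setminus\Omega_\rho^I$ is a union of Schubert-type cells indexed by $I$, fibred over the limit set $\xi_\rho(\partial\Gamma)\subset G/P_A$, and since this set is the continuous image of a circle with controlled Hausdorff dimension---using that planar quasicircles have dimension $<2$ in the quasi-Fuchsian case, and bounds on the Hausdorff dimension of Hitchin limit curves in the Hitchin case---a dimension count reduces $4$-smallness to a combinatorial lower bound on the codimension of these cells, which holds exactly under the stated restrictions on the type of $G$, the weaker dimension bound for Hitchin limit curves accounting for the further exclusion of $F_4,E_6,E_7,E_8$. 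Since $4$-smallness is an open, $G$-invariant condition, and since triviality of the centralizer (hence, as $G$ is adjoint, freeness of the $G$-action) together with properness of the $G$-action holds on a neighbourhood of the irreducible reductive locus, one may choose a connected open $G$-invariant $\mathcal{U}\subset\A$ containing the quasi-Fuchsian locus---and, away from $F_4,E_6,E_7,E_8$, the Hitchin locus---on which all of these hold; $\underline{\mathcal{U}}$ is made $1$-connected by arranging it to deformation-retract onto a $1$-connected neighbourhood of these contractible loci, using the local smoothness of the character variety along them.

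For (iii): the surjectivity of $\KS_\rho\colon T_\rho\A\to H^1(\W_\rho^I,\Theta_{\W_\rho^I})$ (part~(i) of \autoref{thm:main-infinitesimal}) shows $df$ surjects onto the Zariski tangent space of $\T(\W_\rho^I)$ at $[\W_\rho^I]$, while the identification $\ker\KS_\rho=B^1(\Gamma,\g_\rho)$ with the tangent space of the $G$-orbit (part~(ii)) shows $df$ descends to an \emph{isomorphism} $T_{\underline\rho}\underline{\mathcal{U}}\xrightarrow{\sim}H^1(\W_\rho^I,\Theta_{\W_\rho^I})$. Since the image of the classifying map lies in the Kuranishi space, which is cut out inside a ball in $H^1$ by the obstruction map, the inverse function theorem then forces the Kuranishi space to be smooth of dimension $\dim H^1$ near $[\W_\rho^I]$; moreover $H^0(\W_\rho^I,\Theta_{\W_\rho^I})=\Lie(Z_G(\rho(\Gamma)))=0$ on $\mathcal{U}$ (part~(iii)), so $\Aut(\W_\rho^I)$ is finite and any automorphism isotopic to the identity lifts (by the extension argument below) to an element of $\rho(\Gamma)$ and hence is trivial, so $\Aut(\W_\rho^I)\cap\Diff_0=1$ and $\T(\W_\rho^I)$ is locally identified with the Kuranishi space. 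Thus $f$ is a local biholomorphism near $[\W_\rho^I]$, in particular open and locally surjective. For injectivity, if $\rho_1,\rho_2\in\mathcal{U}$ satisfy $f(\underline{\rho_1})=f(\underline{\rho_2})$, a biholomorphism $\W_{\rho_1}^I\to\W_{\rho_2}^I$ isotopic to the identity lifts to a biholomorphism $\Omega_{\rho_1}^I\to\Omega_{\rho_2}^I$ equivariant for an isomorphism $\rho_1(\Gamma)\to\rho_2(\Gamma)$ inducing an inner automorphism of $\Gamma$; since $\F\setminus\Omega_{\rho_i}^I$ has codimension $\geq 4$, a Hartogs-type extension promotes this to a biholomorphism of $\F$, which lies in $\Aut(\F)=G$ and conjugates $\rho_1$ to $\rho_2$, so $\underline{\rho_1}=\underline{\rho_2}$. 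Being open, continuous, and injective, $f$ is a homeomorphism onto its open image.

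Finally, for (iv): the Anosov family $\W^I\to\A_I$ is a holomorphic family over the smooth locus of $\A_I$, so $f$ is holomorphic by Kuranishi theory, and defining the right-hand vertical arrow as $f$ composed with the inverse of the simultaneous uniformization biholomorphism $\underline{\QF}_S\xrightarrow{\sim}\T(S)\times\T(\bar S)$ (legitimate since $\underline{\QF}_S\subset\underline{\mathcal{U}}$) makes the diagram commute tautologically, with all four arrows holomorphic. I expect the main obstacle to lie not in part~(iii)---where the hard analysis is absorbed into \autoref{thm:main-infinitesimal}---but in parts~(i) and~(ii): proving $4$-smallness of quasi-Fuchsian and Hitchin representations demands a uniform control of the Hausdorff dimension of their limit sets together with a case-by-case analysis of the Schubert cells cut out by balanced ideals across all the flag varieties, which is exactly what produces the list of excluded types, and assembling a single $\mathcal{U}$ that is connected, $G$-invariant, contains the relevant loci, carries a free proper $G$-action, and has $1$-connected quotient requires genuine control of the local topology of the character variety near these representations.
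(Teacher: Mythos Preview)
Your approach matches the paper's: reduce part~(iii) to the infinitesimal statement (\autoref{thm:main-infinitesimal}) for completeness/effectiveness of the Anosov family, invoke Kuranishi theory and the vanishing of $H^0$ for openness and local surjectivity, and prove injectivity of $f$ by a Hartogs-type extension of a lifted biholomorphism $\Omega_{\rho_1}^I\to\Omega_{\rho_2}^I$ across the small limit set to an automorphism of $\F$ (this is the paper's \autoref{prop:flag-extension}, which uses Chirka's meromorphic extension, Shiffman, and an Ivashkovich-type lemma). Part~(iv) is indeed tautological.

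Two factual corrections, both in part~(i)/(ii). First, the exclusion of $F_4,E_6,E_7,E_8$ in the Hitchin case is \emph{not} due to a weaker dimension bound---$G$-Hitchin limit curves have $\hdim(\Xi_\rho)=1$, which is sharper than the quasi-Fuchsian bound $<2$. The exclusion is because the Pozzetti--Sambarino--Wienhard continuity result for $\rho\mapsto\hdim(\Xi_\rho)$, needed to propagate the bound to an open neighbourhood, is not (yet) available for those types. Second, and relatedly, the assertion that ``$4$-smallness is an open, $G$-invariant condition'' is not justified as stated: Hausdorff dimension of a continuously varying compact set need not be semicontinuous in the right direction. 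The paper does not claim openness of $4$-smallness directly; rather, it quotes the PSW theorem that $\hdim(\Xi_\rho)$ is continuous and $<2$ on a specific open set, and then applies the combinatorial inequality $\hdim(\Lambda_\rho^I)\leq 2\ell(I)+\hdim(\Xi_\rho)$ together with $\ell(I)\leq N-3$ (from \cite{DS20}, valid away from $A_1,A_2,A_3,B_2$) to deduce $4$-smallness throughout. This is precisely the ``uniform control'' you flag at the end, but it is doing the work of \emph{openness}, not just the base case.
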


The notions of $G$-quasi-Fuchsian and $G$-Hitchin homomorphisms are recalled in \autoref{sec: flag}.

We emphasize that \autoref{thm:main-local}(iii) shows that Anosov homomorphisms satisfying these hypotheses realize all small deformations of the complex structure on $\W_{\rho}^{I}$, and with (iv) this shows that the Bers simultaneous uniformization isomorphism admits a local analytic continuation to the setting of higher-rank complex Anosov homomorphisms.

The proof of \autoref{thm:main-local} hinges on the recent work of Pozzetti-Sambarino-Wienhard \cite{PSW19} which implies that the condition of $4$-smallness persists throughout an open set in $\A$.

For general $\Gamma$ (not a surface group), an analogue of \autoref{thm:main-local} is proved in \autoref{thm:uniformization}, but the assumptions become significantly more technical since we must require certain additional hypotheses that are automatic for surface groups.

\subsection{Extensions and complements}

Due to the presence of singularities in $\Hom(\Gamma, G),$ our discussion of spaces of Anosov homomorphisms uses the language of complex analytic spaces.  However, to obtain a theory that fully incorporates the $G$-action on $\Hom(\Gamma, G)$, it may be necessary to use the even more general framework of complex analytic stacks.  This is made especially clear by the existence of non-reductive Anosov homomorphisms (see e.g.~\cite{GGKW15}).

Regardless of the formalism, the results in the paper show that the naive generalization of Bers' Simultaneous Uniformization Theorem to Anosov homomorphisms will be false in general. Indeed, we exhibit in \autoref{ex:GHYS} a case where the Anosov homomorphism is rigid, but the corresponding complex manifold can be deformed.
But in some cases, for instance surface groups, there is still hope for a general statement. 
We will formulate one question along these lines which has some hope of being true.

Let $\Gamma=\pi_{1}(S)$ be a closed surface group and $G$ be a complex simple Lie group of adjoint type.  Let $\A\subset \Hom(\Gamma, G)$ denote the connected component of $P_{A}$-Anosov homomorphisms containing the $G$-Fuchsian homomorphisms.  Then there is a well-defined quotient stack $[\A/G]$.

Now, let $I$ be a balanced ideal of type $(P_{A}, \F)$ and $\W^{I}\rightarrow \A$ the corresponding family.  Let $\rho \in \A$ and consider the \emph{Teichm\"{u}ller stack} $\mathcal{T}(\W_{\rho}^{I})$ as recently defined by Meersseman \cite{MEE19}.
Is there an isomorphism  $[\A/G]\simeq \mathcal{T}(\W_{\rho}^{I})$ of complex analytic stacks extending Bers' Simultaneous Uniformization Theorem?  

An affirmative answer to this question would be a first step to understanding non-Anosov limits of Anosov homomorphisms via complex analysis, in the spirit of results in the theory of Kleinian groups such as Thurston's double limit theorem and the ending lamination theorem.  

Finally, the prominent role of homological algebra in this paper can be seen as a substitute for the special role of the Beltrami equation and the Measurable Riemann Mapping Theorem in the theory of Kleinian groups in $\PSL(2, \C)$; a similar phenomenon is seen in the theory of higher-dimensional Kleinian groups, as discussed by Kapovich in \cite{KAP08}.
To wit, the Bers Simultaneous Uniformization Theorem is a straightforward consequence of the Measurable Riemann Mapping Theorem, and the lack of a satisfactory theory of quasi-conformal maps for higher dimensional flag varieties forces one to turn to the techniques of homological algebra in attacking analogous questions.

\subsection{Outline}
\autoref{sec:prelim} collects preliminaries on Lie theory, flag varieties, Anosov homomorphisms, and domains of discontinuity.  For readers familiar with the theory of Anosov representations and their actions on flag manifolds, this section serves only to set notation and conventions.

\autoref{sec:anosov} discusses complex analytic families and deformation theory, and defines the families of complex manifolds over the set of Anosov homomorphisms which are the main objects of study in the paper.

\autoref{sec:ks} contains a key technical result (\autoref{thm:pent}) that identifies the Kodaira-Spencer map of the families introduced in \autoref{sec:anosov} with an edge map in a certain spectral sequence.  The proofs of the main results use this theorem.

\autoref{sec:small} contains the general theorems which allow a comparison between the deformation theory of Anosov homomorphisms and the deformation theory of complex manifolds appearing in the associated families, including the proofs of \autoref{thm:main-infinitesimal} and \autoref{cor:main-infinitesimal}.  It is in this section where the hypothesis of being $4$-small plays a critical role.

\autoref{sec:teich} and \autoref{sec:small-teich} introduce Teichm\"{u}ller spaces and use the results of \autoref{sec:small} to deduce the promised local uniformization results (including \autoref{thm:main-local}) extending the Bers Simultaneous Uniformization Theorem.

Finally, \autoref{sec:son1} contains a brief discussion of cocompact lattices in $\SO(n,1)$ in the context of this paper.  Here, we highlight the relationship between deformations of convex real projective manifolds and deformations of the associated complex manifolds.

\subsection{Acknowledgements}

The authors thank Beatrice Pozzetti and Anna Wienhard for helpful conversations related to this work, and an anonymous referee for helpful suggestions and corrections.
The first author was supported by the U.S.~National Science Foundation, through award DMS 1709877.  The second author was supported by the Deutsche Forschungsgemeinschaft within the RTG 2229 “Asymptotic
invariants and limits of groups and spaces" and by Deutsche Forschungsgemeinschaft (DFG, German Research Foundation) under Germany’s Excellence Strategy EXC-2181/1 - 390900948 (the Heidelberg STRUCTURES Cluster of Excellence).

\section{Preliminaries}
\label{sec:prelim}

\subsection{Lie theory and flag varieties}\label{sec: flag}

References for the preliminary material discussed below include \cite{Hum75}\cite{Helgason01} (Lie theory), \cite{BE89}\cite{BGG82} (flag varieties), \cite{bjorner-brenti} (Bruhat order), and \cite[Section 4.5]{GW12} (symmetric parabolic subgroups).
Sections 2.1 and 3 of \cite{DS20} discuss most of the material from this section with additional detail and examples.

Let $\hat{G}$ be a connected complex semisimple Lie group.
If $\hat{G}$ acts transitively on a connected compact complex manifold $\F,$ then $\F$ is called a \emph{complex flag variety}.  If $G:=\textnormal{Aut}_{0}(\F)$ is the connected component of the identity in the automorphism group of $\F,$ then $G$ is a connected complex semisimple Lie group with trivial center and there is a $G$-equivariant isomorphism $G/P_{D}\simeq \F$ where $P_{D}<G$ is a parabolic subgroup.
Every complex flag variety $\F$ is a smooth complex projective variety.

While we will usually consider the automorphism group $G$ of a flag variety as a complex manifold, it can also be given a compatible linear algebraic group structure (over $\C$).
This follows from the existence of a faithful linear representation ($\Ad$) and \cite[Corollary~7.9]{Borel91}.

Fix Cartan and Borel subgroups $H < B < G$.
Without loss of generality, we can assume that the parabolic subgroup $P_{D}$ satisfies $B<P_{D}$.
Furthermore, associated to these data is the Weyl group $W:=N_{G}(H)/H$, where $N_G(H)$ denotes the normalizer of $H$ in $G$.
The Weyl group is a finite Coxeter group, and we will now recall how its  structure relates to the geometry of the flag variety $\F$.

The Borel subgroup $B<G$ acts on the flag variety $\F$ with finitely many orbits, each of which is bi-holomorphic to a complex affine space.  The parabolic subgroup $P_D < G$ determines a subgroup $W_{D} := ( N_G(H) \cap P_{D})/H$ of the Weyl group $W$ which has the property that the $B$-orbits on $\F$ are in bijection with the coset space $W/W_{D}$: $\F=\sqcup_{w\in W/W_{D}} \mathcal{O}_{w}$.
The closure of an orbit $\overline{\mathcal{O}_{w}}$ is a projective algebraic subvariety of $\F$ which is a union of $\mathcal{O}_{w}$ and other orbits of strictly smaller dimension.
These projective algebraic subvarieties obtained as $B$-orbit closures are called \emph{Schubert varieties}.  The orbit closure relation induces a partial order on $W/W_{D}$.
Since there are finitely many orbits, there exists a unique open dense orbit.
Combinatorially, this implies that the partial order on $W/W_{D}$ has a unique maximal element.  

Now, suppose $\F=G/B$ is a \emph{complete} flag variety.  In this case, the subgroup $W_{D}<W$ is trivial and the induced partial order on $W$ is called the (strong) Chevalley-Bruhat order.  Let $w\mapsto \mathcal{O}_{w}$ denote the bijection between $B$-orbits and elements of $W.$  Recording the dimension of the affine space $\mathcal{O}_{w}$ defines the \emph{length function}
$\ell: W\rightarrow \mathbb{Z}$ on the Coxeter group $W$.
There is a unique element $w_0 \in W$ of maximal length, and the coset $w_0 W_{D}$ is the unique maximal element of $W / W_{D}$.

Finally, the longest element $w_{0}\in W$ acts on $W$ by conjugation.  A parabolic subgroup $P$ such that $B<P$ is \emph{symmetric} if the corresponding subgroup $W_{P}<W$ is invariant under this action.  Equivalently, $P$ is symmetric if given any parabolic subgroup $Q$ such that the intersection $Q\cap P$ is a reductive subgroup of $Q$ and $P,$ then $Q$ is conjugate to $P.$  

\subsection{Cartan projection}\label{sec:cartan-proj}

The Cartan projection appears in the definition of an Anosov homomorphism in the next section; however, it is not used anywhere else in the paper.
We include its definition here for completeness.
See \cite[Section 2.3]{GGKW15} for details.

Let $K<G$ be a maximal compact subgroup such that $K\cap H<K$ is a maximal torus.  Let $\g= \mathfrak{k} \oplus \mathfrak{m}$ be the corresponding Cartan decomposition and $\mathfrak{a}\subset \mathfrak{m}$ a Cartan subspace.  Let $\Pi \subset \mathfrak{a}^{\star}$ be a set of simple restricted roots.  Recall that standard parabolic subgroups of $G$ are in bijection with subsets of the simple restricted roots.

Let $\overline{\mathfrak{a}}^{+}\subset \mathfrak{a}$ be a closed positive Weyl chamber and $A=\textnormal{exp}(\overline{\mathfrak{a}}^{+}).$  Then we have the group level Cartan decomposition $G=KAK$ and an associated map
$a: G\rightarrow \overline{\mathfrak{a}}^{+}$ which takes $g=k_1 \exp(X_g)\, k_2$ to $X_g \in \overline{\mathfrak{a}}^+$.
The map $a$ is called the \emph{Cartan projection}.

\subsection{Anosov homomorphisms and domains of discontinuity}

In this section, we fix a flag variety $\F$ and $G=\textnormal{Aut}_{0}(\F).$  Let $\Gamma$ be a finitely generated torsion-free word hyperbolic group, and let $\Hom(\Gamma,G)$ denote the set of all homomorphisms from $\Gamma$ to $G$ equipped with the compact-open topology.
The group $G$ acts on $\Hom(\Gamma,G)$ by conjugation.

\begin{defn}
Let $P_{A}<G$ be a symmetric parabolic subgroup and $\theta_{A}\subset \Pi$ the corresponding subset of simple restricted roots.  A homomorphism $\rho: \Gamma\rightarrow G$ is $P_{A}$-Anosov if there exist $\kappa_{1}, \kappa_{2}>0$ such that
\begin{align}
\langle \alpha, a(\rho(\gamma)) \rangle \geq \kappa_{1} \lvert \gamma \rvert - \kappa_{2}
\end{align}
for all $\alpha\in \theta_{A}$ and $\gamma\in \Gamma,$ where $\lvert \gamma \rvert$ is the word length on $\Gamma$ with respect to some finite generating set and $\langle \ , \ \rangle$ is the duality pairing.
\end{defn}
There are myriad equivalent definitions of Anosov homomorphisms in the literature, and the one given above seems most economical for our purposes.  It combines the work of Kapovich-Leeb-Porti \cite{KLP14.1}, Guichard-Gu\'{e}ritaud-Kassel-Wienhard \cite{GGKW15} and Bochi-Potrie-Sambarino \cite{BPS19}.  This class of homomorphisms was originally introduced by Labourie \cite{LAB06} (using a rather different definition, and in a more restricted setting) who coined the term Anosov homomorphism.  Also, note that the definition of Anosov homomorphism makes sense for any parabolic subgroup $Q_{A}$, not necessarily symmetric.  But, given any $Q_{A}$-Anosov homomorphism $\rho$, there is a canonical symmetric parabolic subgroup $P_{A}<Q_{A}$ for which $\rho$ is $P_{A}$-Anosov (see \cite{GW12}).

We record here the following essential properties of $P_{A}$-Anosov homomorphisms: these properties were first developed by Labourie \cite{LAB06} in certain special cases, and then proved in general by Guichard-Wienhard \cite{GW12}.
\begin{itemize}
    \item The notion of $P_A$-Anosov only depends on the conjugacy class of the parabolic subgroup $P_A<G.$  Moreover, if $P_{A}^{\prime}$ is a parabolic subgroup such that $P_{A}<P_{A}^{\prime},$ then every $P_{A}$-Anosov homomorphism is $P_{A}^{\prime}$-Anosov.
    \item The set of $P_A$-Anosov homomorphisms is open in $\Hom(\Gamma,G)$ and invariant under the action of $G$ by conjugation.
    \item Every $P_{A}$-Anosov homomorphism is discrete and injective, and 
    \item Given a $P_{A}$-Anosov homomorphism $\rho : \Gamma \to G$, there is an associated continuous, injective, $\rho$-equivariant \emph{limit map} $\xi_\rho : \partial \Gamma \to G/P_A$ and the assignment $\rho \mapsto \xi_{\rho}$ is continuous.  Moreover, this limit map is unique. We will denote the image of $\xi_{\rho}$ by $\Xi_{\rho}\subset G/P_{A}$ and call it the \emph{limit curve} of $\rho.$  
\end{itemize}
Since the symmetric parabolic subgroup $P_A<G$ will be fixed in the discussion to follow, for brevity we will use the term \emph{Anosov} to mean $P_A$-Anosov.  We now recall two central examples of Anosov homomorphisms.  For the statements below, we refer the reader to \autoref{Afam} for a discussion of the complex analytic structure on $\Hom(\Gamma, G).$  

Fix $S$ a closed oriented surface of genus $g\geq 2$ and let $\Gamma=\pi_{1}(S)$ be the corresponding surface group.  A homomorphism $\rho_{0}: \Gamma \rightarrow \PSL(2, \C)$ is called quasi-Fuchsian if there exists a continuous, $\rho_{0}$-equivariant injective map $\xi_{0}: \partial\Gamma\rightarrow \CP^{1}$.  Recall that the Teichm\"{u}ller space $\mathcal{T}(S)$ is the space of oriented marked complex structures on $S$; it is a contractible complex manifold of dimension $3g-3.$  We denote the surface with the opposite orientation by $\overline{S}.$  The following proposition summarizes some properties of quasi-Fuchsian representations that we will need; it combines various results from the literature (most of them well-known) and precise references are given in the proof that follows.
\begin{prop}\label{prop: qf}
Let $\mathcal{QF}_{S}\subset \Hom(\Gamma, \PSL(2, \C))$ be the subset of quasi-Fuchsian homomorphisms.  Then,
\begin{rmenumerate}
\item Every $\rho\in \mathcal{QF}_{S}$ is Anosov.
\item $\mathcal{QF}_{S}$ has the structure of a connected complex manifold of dimension $6g-3.$
\item There is a principal $\PSL(2,\C)$ bundle $\mathcal{QF}_{S}\rightarrow \mathcal{T}(S)\times \mathcal{T}(\overline{S}).$
\item For $\rho \in \mathcal{QF}_{S}$, the image $\rho(\Gamma)$ has trivial centralizer in $\PSL_2\C$.
\end{rmenumerate}
\end{prop}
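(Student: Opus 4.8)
The proposition collects facts that are either classical or established elsewhere, so the plan is to prove (iv) and (i) directly, derive (iii) from Bers' Simultaneous Uniformization Theorem together with freeness and properness of the conjugation action, and obtain (ii) from (iii) and a deformation-theoretic dimension count. For (iv): since $\Gamma=\pi_1(S)$ contains nonabelian free subgroups and $\xi_\rho$ is injective, the limit curve $\Xi_\rho$ is a topological circle and $\rho(\Gamma)$ is a nonelementary subgroup of $\PSL(2,\C)$; every proper algebraic subgroup of $\PSL(2,\C)$ is elementary (contained in a Borel, in the normalizer of a maximal torus, or finite), so $\rho(\Gamma)$ is Zariski dense, and since $\PSL(2,\C)$ is connected with trivial center the centralizer of a Zariski-dense subgroup equals the center and is hence trivial. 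In particular the conjugation action of $\PSL(2,\C)$ on $\QF_S$ is free. For (i): a quasi-Fuchsian homomorphism is classically a convex cocompact Kleinian group isomorphic to $\Gamma$, acting cocompactly on the convex hull of $\Xi_\rho$ in $\H^3$; by the Milnor--\v{S}varc lemma the orbit map $\Gamma\to\H^3$ is then a quasi-isometric embedding, which in rank one amounts precisely to the bound $\langle\alpha,a(\rho(\gamma))\rangle\geq\kappa_1|\gamma|-\kappa_2$ for the unique simple restricted root $\alpha$, i.e. the $B$-Anosov condition. Alternatively one cites \cite{GW12}, where quasi-Fuchsian homomorphisms occur as a basic example of Anosov homomorphisms.

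For (iii), the classifying map sends $\rho$ to the pair $(Y_\rho^+,Y_\rho^-)$ of marked Riemann surfaces obtained as quotients by $\rho(\Gamma)$ of the two components of $\CP^1\setminus\Xi_\rho$, a point of $\T(S)\times\T(\bar{S})$; Bers' Simultaneous Uniformization Theorem \cite{BER60} states that the induced map on $\underline{\QF}_S=\QF_S/\PSL(2,\C)$ is a biholomorphism onto $\T(S)\times\T(\bar{S})$. It remains to recognize $\QF_S\to\underline{\QF}_S$ as a principal $\PSL(2,\C)$-bundle: the action is free by (iv), and it is proper because $\QF_S$ lies in the open locus of convex cocompact representations, on which the conjugation action of the rank-one group $\PSL(2,\C)$ is proper. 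Composing with Bers' biholomorphism yields the principal $\PSL(2,\C)$-bundle asserted in (iii).

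For (ii), openness of $\QF_S$ in $\Hom(\Gamma,\PSL(2,\C))$ follows from (i) and the openness of the Anosov condition recalled above; that $\QF_S$ is then a complex manifold of the asserted dimension will follow once each $\rho\in\QF_S$ is seen to be a smooth point of $\Hom(\Gamma,\PSL(2,\C))$ of dimension $6g-3$. By (iv), $H^0(\Gamma,\g_\rho)=\Lie\bigl(Z_{\PSL(2,\C)}(\rho(\Gamma))\bigr)=0$, so Poincar\'e duality on $S$ together with the $\Gamma$-module isomorphism $\g_\rho\cong\g_\rho^{*}$ induced by the Killing form gives $H^2(\Gamma,\g_\rho)\cong H^0(\Gamma,\g_\rho)^{*}=0$; hence $\rho$ is unobstructed and smooth in $\Hom(\Gamma,\PSL(2,\C))$, of dimension $\dim Z^1(\Gamma,\g_\rho)=\dim H^1(\Gamma,\g_\rho)+\dim\g$ (using $H^0(\Gamma,\g_\rho)=0$ to evaluate $\dim B^1(\Gamma,\g_\rho)=\dim\g$). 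The Euler characteristic identity $\dim H^0-\dim H^1+\dim H^2=(2-2g)\dim\g$ forces $\dim H^1=(2g-2)\dim\g=6g-6$, so $\dim\QF_S=6g-3$; connectedness is immediate from (iii) since $\QF_S$ fibers with connected fiber $\PSL(2,\C)$ over the contractible base $\T(S)\times\T(\bar{S})$. The one step needing genuine care beyond invoking classical results is the properness of the conjugation action in the proof of (iii) — equivalently, that $\underline{\QF}_S$ is a manifold onto which $\QF_S$ projects as a locally trivial bundle — and even there the substance reduces to observing that $\QF_S$ lies inside the well-behaved locus of convex cocompact, structurally stable representations.
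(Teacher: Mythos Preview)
Your proof is correct and follows essentially the same approach as the paper: establish Zariski density of $\rho(\Gamma)$ to obtain (iv), deduce vanishing of $H^0$ and $H^2$ to get smoothness, and invoke Bers' theorem for (iii), from which (ii) follows. Your arguments differ only in minor details---you use Poincar\'e duality and self-duality of $\g_\rho$ for $H^2=0$ where the paper cites Goldman's result directly, and you supply a convex-cocompactness argument for (i) where the paper simply cites \cite{GW12}---but the overall structure is the same.
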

\begin{proof}
Every $\rho\in \mathcal{QF}_{S}$ is Anosov by \cite[Theorem 5.15]{GW12}, giving (i).

For (iv), it will be convenient to use the algebraic structure of $\PSL(2,\C)$.
First we observe that $\rho(\Gamma)$ is Zariski dense in $\PSL(2,\C)$:
The Zariski closure is a complex Lie subgroup of $\PSL(2,\C)$, and any \emph{proper} subgroup of this type acts on $\CP^1$ with a global fixed point.
Thus it suffices to show $\rho(\Gamma)$ acts on $\CP^1$ without fixed points.
Take elements $g,g'$ of $\Gamma$ whose attracting and repelling fixed points on $\partial \Gamma$ are distinct (e.g.~elements corresponding to homotopically distinct simple closed curves on $S$).
Then $\rho(g)$ and $\rho(g')$ have no common fixed points on $\CP^1$ (by injectivity and dynamics-preserving properties of $\xi_0$), as required.

The centralizer of any subset $E \subset \PSL(2,\C)$ also centralizes the Zariski closure of $E$, since the conjugation action of $\PSL(2,\C)$ on itself is algebraic.
Therefore, the centralizer of $\rho(\Gamma)$ lies in the center of $\PSL(2,\C)$, which is trivial.
This proves (iv).

In \cite[Section 1.4]{GOL84}, Goldman shows that for $i=0,2$, the dimension of $H^{i}(\Gamma, \sl(2,\C)_{\rho})$ is equal to that of the Lie algebra of the centralizer $\rho(\Gamma)$.
We have seen that this centralizer is trivial, hence both of these cohomology groups vanish.
In \cite{GOL84} it is shown that this implies that $\rho$ is a smooth point of $\Hom(\Gamma, \PSL(2, \C))$.
Since being quasi-Fuchsian is an open property in the space of homomorphisms, this implies $
\mathcal{QF}_{S}$ is a smooth open subset of $\Hom(\Gamma, \PSL(2, \C)).$   

Finally, $\PSL(2, \C)$ acts freely and properly on $\mathcal{QF}_{S}$ with quotient isomorphic to $\mathcal{T}(S) \times \mathcal{T}(\overline{S})$ by the Bers Simultaneous Uniformization Theorem \cite{BER60}.  This proves (ii) and (iii).
\end{proof}

Let $G$ be a complex simple Lie group of adjoint type and $\iota_{G}: \PSL(2, \C)\rightarrow G$ the Kostant principal three dimensional subgroup \cite{KOS59}.  A homomorphism $\rho: \Gamma\rightarrow G$ is $G$-quasi-Fuchsian if there exists a quasi-Fuchsian homomorphism $\rho_{0}$ such that $\rho$ is conjugate to $\iota_{G}\circ \rho_{0}.$  It follows from \cite{DS20}
 that every $G$-quasi-Fuchsian homomorphism is $B$-Anosov where $B<G$ is a Borel subgroup.  
\begin{prop}\label{prop:GQF}
Let $\mathcal{QF}_{S}(G)\subset \Hom(\Gamma, G)$ denote the subset of $G$-quasi-Fuchsian homomorphisms.  Then,
\begin{rmenumerate}
\item $\mathcal{QF}_{S}(G)$ admits the structure of a connected complex manifold of dimension $6g-6+\textnormal{dim}(G).$
\item The subset $\mathcal{QF}_{S}(G)\subset \Hom(\Gamma, G)$ is a locally closed complex analytic subspace.  Moreover, every $\rho\in \mathcal{QF}_{S}(G)$ is a smooth point of $\Hom(\Gamma, G).$ 
\item Finally, $G$ acts properly and freely on $\mathcal{QF}_{S}(G)$ and the quotient $\underline{\mathcal{QF}}_{S}(G)$ is contractible.
\item For each $\rho \in \mathcal{QF}_{S}(G)$, the image $\rho(\Gamma)$ has trivial centralizer in $G$.
\end{rmenumerate}
\end{prop}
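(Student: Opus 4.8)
The plan is to realize $\mathcal{QF}_S(G)$ as the image of an orbit map, identify it with an associated bundle over $\mathcal{QF}_S$, and then read off all four assertions from \autoref{prop: qf}, standard deformation theory, and Kostant's description of the principal $\PSL(2,\C)$ in $G$. The key preliminary step is to analyze the holomorphic map
\[
\Psi\colon G\times\mathcal{QF}_S\longrightarrow\Hom(\Gamma,G),\qquad \Psi(g,\rho_0)=g\,(\iota_G\circ\rho_0)\,g^{-1},
\]
whose image is $\mathcal{QF}_S(G)$ by definition. For this I first compute $N_G(\iota_G(\PSL(2,\C)))$. Since $\iota_G$ is an injective morphism of linear algebraic groups its image is Zariski closed, and since every quasi-Fuchsian $\rho_0$ has Zariski-dense image in $\PSL(2,\C)$ (the argument in the proof of \autoref{prop: qf}(iv)), $\iota_G(\rho_0(\Gamma))$ is Zariski dense in $\iota_G(\PSL(2,\C))$. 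By Kostant \cite{KOS59} the centralizer $Z_G(\iota_G(\PSL(2,\C)))$ is trivial (its Lie algebra is $\z(\g)=0$, and the residual finite group vanishes because $\iota_G(\PSL(2,\C))$ is perfect and $G$ is of adjoint type), so the conjugation representation $N_G(\iota_G(\PSL(2,\C)))\to\Aut(\PSL(2,\C))$ is injective with image containing the inner automorphisms---which exhaust $\Aut(\PSL(2,\C))$ in the algebraic category---whence $N_G(\iota_G(\PSL(2,\C)))=\iota_G(\PSL(2,\C))$.

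It follows that $\Psi(g,\rho_0)=\Psi(g',\rho_0')$ forces $(g')^{-1}g$ to normalize $\overline{\iota_G(\rho_0(\Gamma))}=\iota_G(\PSL(2,\C))$, hence $(g')^{-1}g=\iota_G(k)$ and $\rho_0'=k\rho_0k^{-1}$ for a unique $k\in\PSL(2,\C)$; so the fibers of $\Psi$ are exactly the orbits of the $\PSL(2,\C)$-action $k\cdot(g,\rho_0)=(g\,\iota_G(k)^{-1},\,k\rho_0k^{-1})$, which is free and proper---being such already on the $G$-factor (right translation by the closed subgroup $\iota_G(\PSL(2,\C))$) and proper on $\mathcal{QF}_S$ by \autoref{prop: qf}(iii). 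Hence $M:=(G\times\mathcal{QF}_S)/\PSL(2,\C)$ is a connected complex manifold of dimension $\dim G+(6g-3)-3=6g-6+\dim G$, and $\Psi$ descends to a holomorphic bijection $M\to\mathcal{QF}_S(G)$; this gives (i). The left-translation action of $G$ on the $G$-factor commutes with the $\PSL(2,\C)$-action and descends to the conjugation action on $\mathcal{QF}_S(G)$; it is free (a stabilizing $g$ forces, via \autoref{prop: qf}(iv), the accompanying $k$ to be trivial, hence $g$ trivial) and proper (descending from the proper left-translation action of $G$ on itself), and the iterated quotient $M/G=\mathcal{QF}_S/\PSL(2,\C)=\T(S)\times\T(\overline{S})$ is contractible; this is (iii). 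For (iv), conjugation-invariance reduces to $\rho=\iota_G\circ\rho_0$, for which $Z_G(\rho(\Gamma))=Z_G(\overline{\iota_G(\rho_0(\Gamma))})=Z_G(\iota_G(\PSL(2,\C)))=\{e\}$ by the above.

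Finally, for (ii): each $\rho\in\mathcal{QF}_S(G)$ is a smooth point of $\Hom(\Gamma,G)$ because, by (iv), $H^0(\Gamma,\g_\rho)=\Lie(Z_G(\rho(\Gamma)))=0$, so Poincar\'e duality for the surface group together with the Killing-form self-duality of $\g_\rho$ gives $H^2(\Gamma,\g_\rho)=0$; thus $\rho$ carries no obstructions and is smooth, exactly as in \cite{GOL84} for $\PSL(2,\C)$. For the submanifold statement I would first verify that $M\to\Hom(\Gamma,G)$ is an immersion: its differential at $\rho=\iota_G\circ\rho_0$ carries $\g\oplus Z^1(\Gamma,\sl(2,\C)_{\rho_0})$ onto $B^1(\Gamma,\g_\rho)+(\iota_G)_*Z^1(\Gamma,\sl(2,\C)_{\rho_0})$, and Kostant's decomposition of $\g$ into $\iota_G(\sl(2,\C))$-isotypic summands $\g=\bigoplus_i V_i$, combined with $H^0(\Gamma,\g_\rho)=0$, shows that its kernel is exactly the tangent space to the $\PSL(2,\C)$-orbit. \emph{The main obstacle is upgrading this injective immersion to a homeomorphism onto a locally closed subset.} I expect to handle it with a Luna-type slice at $\rho$ (available since the centralizer is trivial): a slice $T$ transverse to the $G$-orbit realizes a neighborhood of $\rho$ in $\Hom(\Gamma,G)$ as $G\times T$, and $\mathcal{QF}_S(G)$, being $G$-saturated, corresponds to $G\times(T\cap\mathcal{QF}_S(G))$; the normalizer computation above and properness of the $G$-action on the open set of Anosov homomorphisms then force nearby $G$-quasi-Fuchsian homomorphisms to arise from $g$ bounded modulo $\iota_G(\PSL(2,\C))$, hence---after absorbing that ambiguity---from $\rho_0'$ in a neighborhood of $\rho_0$ in the open set $\mathcal{QF}_S$, identifying $T\cap\mathcal{QF}_S(G)$ with an open piece of a complex submanifold and yielding the local closedness and analytic structure.
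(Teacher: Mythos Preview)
Your proposal is correct and follows essentially the same approach as the paper: realize $\mathcal{QF}_S(G)$ via the orbit map $G\times\mathcal{QF}_S\to\Hom(\Gamma,G)$ with $\PSL(2,\C)$-fibers, deduce (iv) from Zariski density plus Kostant's triviality of $Z_G(\iota_G(\PSL(2,\C)))$, cite Goldman for smoothness, and read off (iii) from the associated-bundle description. The paper's proof is in fact terser than yours on two points---it does not explicitly compute the normalizer (you need it to identify the fiber, and your argument via $\Aut(\PSL(2,\C))=\mathrm{Inn}$ is clean), and it simply asserts the locally closed claim without further justification---so your slice-theoretic sketch for that step is genuine additional content rather than a gap relative to the paper.
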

\begin{proof}
Consider the injective holomorphic map $\iota_{G}: \mathcal{QF}_{S}\rightarrow \mathcal{QF}_{S}(G)$ induced by $\iota_{G}.$  Let $Z\subset \mathcal{QF}_{S}(G)$ denote the image.  Then, $G$ acts properly on $Z$ and the action map $G\times Z\rightarrow \mathcal{QF}_{S}(G)$ is surjective with fibers isomorphic to $\PSL(2, \C).$  This proves (i), and implies that the subset $\mathcal{QF}_{S}(G)\subset \Hom(\Gamma, G)$ is a locally closed complex analytic subspace.

For (iv), we use an argument similar to the proof of \autoref{prop: qf}(iv).
Let $Z_\rho$ denote the centralizer in $G$ of $\rho(\Gamma)$.
As in \autoref{prop: qf}, $Z_\rho$ also centralizes the Zariski closure of $\rho(\Gamma)$.
If we write $\rho = g (\iota_G \circ \rho_0) g^{-1}$ for $g \in G$ (as the definition of $\mathcal{QF}_{S}(G)$ allows), we first claim that the Zariski closure of $\rho(\Gamma)$ is equal to $g \iota_G(\PSL(2,\C)) g^{-1}$.
Since this (closed) algebraic subgroup of $G$ contains $\rho(\Gamma)$, it also contains the Zariski closure of $\rho(\Gamma)$.
If the Zariski closure of $\rho(\Gamma)$ were a proper subgroup of $g \iota_G(\PSL(2,\C)) g^{-1}$, then conjugating that group by $g^{-1}$ and taking the preimage by $\iota_G$ would give a complex algebraic subgroup of $\PSL(2,\C)$ containing the quasi-Fuchsian group $\rho_0(\Gamma)$.
But, as observed in the proof of \autoref{prop: qf} quasi-Fuchsian groups are Zariski dense, so this is a contradiction.
Therefore, $Z_\rho$ centralizes $g \iota_G(\PSL(2,\C)) g^{-1}$.
Finally, the centralizer of $\iota_G(\PSL(2,\C))$ or any conjugate thereof is trivial, as follows from the structure theory for the principal three-dimensional subgroup developed in \cite[Section 5]{KOS59}.

By a theorem of Goldman \cite{GOL84}, triviality of $Z_\rho$ implies that $\rho$ is a smooth point of $\Hom(\Gamma, G)$, which completes the proof of (ii).  

Finally, the action map $G\times Z\rightarrow \mathcal{QF}_{S}(G)$ induces an isomorphism $\underline{\mathcal{QF}}_{S}\simeq \underline{\mathcal{QF}}_{S}(G)$, and therefore $\underline{\mathcal{QF}}_{S}(G)$ is contractible by \autoref{prop: qf}, and (iii) follows.
\end{proof}

Next, consider the inclusion $\PSL(2, \R)< \PSL(2, \C).$  A quasi-Fuchsian homomorphism $\rho_{0}: \Gamma\rightarrow \PSL(2, \C)$ is \emph{Fuchsian} if it can be conjugated to lie in $\PSL(2, \R).$  The set of Fuchsian homomorphisms is denoted by $\mathcal{F}_{S}\subset \Hom(\Gamma, \PSL(2, \C)).$  The restriction of the Kostant homomorphism $\iota_{G}: \PSL(2, \R)\rightarrow G_{\R}$ takes values in a split real form $G_{\R}<G.$  A homomorphism $\rho: \Gamma\rightarrow G$ is $G$-Fuchsian if there is a Fuchsian homomorphism $\rho_{0}: \Gamma\rightarrow \PSL(2, \R)$ such that 
$\rho$ is conjugate to $\iota_{G}\circ \rho_{0}: \Gamma\rightarrow G_{\R}.$ 
We denote the set of $G$-Fuchsian homomorphisms by $\mathcal{F}_{S}(G)\subset \Hom(\Gamma, G).$    

A homomorphism $\rho: \Gamma\rightarrow G_{\R}$ is $G_{\R}$-Fuchsian if there exists a Fuchsian homomorphism $\rho_{0}: \Gamma\rightarrow \PSL(2, \R)$ such that $\rho$ is conjugate via an element of $G_{\R}$ to $\iota_{G}\circ \rho_{0}.$  The set of all $G_{\R}-$ Fuchsian homomorphisms is either connected in $\Hom(\Gamma, G_{\R}),$ or has two connected components.  A homomorphism is $G_{\R}$-\emph{Hitchin} if it lies in a component of $\Hom(\Gamma, G_{\R})$ which contains the $G_{\R}$-Fuchsian homomorphisms.  A homomorphism $\rho: \Gamma\rightarrow G$ is called $G$-Hitchin if it is conjugate to a $G_{\R}$-Hitchin homomorphism $\eta: \Gamma\rightarrow G_{\R}.$  We denote the set of $G$-Hitchin homomorphisms by $\mathcal{H}_{S}(G)\subset \Hom(\Gamma, G)$  and the corresponding set of $G_{\R}$-Hitchin homomorphisms by $\mathcal{H}_{S}(G_{\R})\subset \Hom(\Gamma, G_{\R}).$

The following theorem follows readily from \cite{HIT92}.  
\begin{prop}\label{prop:GHIT}
The set $\mathcal{H}_{S}(G)\subset \Hom(\Gamma, G)$ of $G$-Hitchin homomorphisms has the structure of a smooth real manifold of real dimension $(2g-2)\cdot \textnormal{dim}(G_{\R}) + \textnormal{dim}_{\R}(G).$  Moreover, every $\rho\in \mathcal{H}_{S}(G)$ is a smooth point of $\Hom(\Gamma, G).$  Finally, $G$ acts properly and freely on $\mathcal{H}_{S}(G)$ and the quotient is contractible.
\end{prop}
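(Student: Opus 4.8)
The plan is to deduce \autoref{prop:GHIT} from the structure theory of Hitchin components established by Hitchin \cite{HIT92}, together with the observation that the relevant cohomological rigidity statement in each factor transfers to $\Hom(\Gamma,G)$ via the same smoothness criterion of Goldman used above. Throughout I write $G_\R < G$ for the split real form containing the image of the Kostant homomorphism, so that $\dim_\R(G_\R) = \dim_\C(G)$ and $\dim_\R(G) = 2\dim_\C(G)$.

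\textbf{Step 1: the dimension count and smooth manifold structure of $\mathcal{H}_S(G_\R)$.} First I would recall that Hitchin showed in \cite{HIT92} that the set of $G_\R$-Hitchin homomorphisms is a union of connected components of $\Hom(\Gamma, G_\R)$, each diffeomorphic to a Euclidean space of dimension $(2g-2)\cdot\dim(G_\R)$. (Hitchin's theorem is usually phrased for the character variety, where the Hitchin component has dimension $(2g-2)\cdot\dim(G_\R)$; at the level of homomorphisms one adds back the dimension of the orbit, and the point is that every Hitchin homomorphism has trivial centralizer so the $G_\R$-action is free with $\dim(G_\R)$-dimensional orbits, giving $(2g-2)\cdot\dim(G_\R) + \dim(G_\R)$; care with this bookkeeping will be needed.) In particular $\mathcal{H}_S(G_\R)$ is a smooth real manifold and every $\rho\in\mathcal{H}_S(G_\R)$ is a smooth point of $\Hom(\Gamma, G_\R)$, the latter because Hitchin components are open in $\Hom(\Gamma,G_\R)$ and are themselves manifolds.

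\textbf{Step 2: passing from $G_\R$ to $G$.} The set $\mathcal{H}_S(G)$ is, by definition, the union of $G$-orbits of points in $\mathcal{H}_S(G_\R)$. To see it is a smooth manifold of the stated dimension, I would argue as in the proof of \autoref{prop:GQF}(i)--(ii): the action map $G\times\mathcal{H}_S(G_\R)\to\mathcal{H}_S(G)$ is surjective, and its fibers are cosets of $G_\R$ (using that the $G$-stabilizer of the $G_\R$-Hitchin locus, modulo $G_\R$, is trivial — this uses that $G_\R$ is its own normalizer in $G$ up to the relevant finite issues, or more simply that a $G$-conjugacy between two $G_\R$-Hitchin homomorphisms is realized inside $G_\R$, which follows from Zariski-density of the image together with the fact that $G_\R$ is a real form). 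Hence $\dim_\R\mathcal{H}_S(G) = \dim_\R(G) + \dim_\R\mathcal{H}_S(G_\R) - \dim_\R(G_\R) = \dim_\R(G) + (2g-2)\cdot\dim(G_\R)$, matching the stated formula once one rewrites $\dim(G_\R) = \dim_\R(G_\R)$. For the smooth-point claim in $\Hom(\Gamma,G)$, I would invoke Goldman's criterion \cite{GOL84}: it suffices that $H^0(\Gamma,\g_\rho) = H^2(\Gamma,\g_\rho) = 0$, and by Poincaré duality for the surface group both vanish once $H^0(\Gamma,\g_\rho) = \Lie(Z_G(\rho(\Gamma))) = 0$. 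Triviality of the centralizer follows by the now-familiar Zariski-density argument: the Zariski closure of $\rho(\Gamma)$ contains (a conjugate of) $\iota_G(\PSL(2,\R))$, which is already Zariski dense in $G$ since $G$ is simple and the principal $\mathfrak{sl}_2$ is a principal subgroup; hence $\rho(\Gamma)$ is Zariski dense in $G$ and its centralizer lies in $Z(G) = \{e\}$.

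\textbf{Step 3: properness, freeness, and contractibility.} Freeness of the $G$-action is immediate from Step 2's triviality of centralizers. Properness I would deduce from properness of the $G_\R$-action on $\mathcal{H}_S(G_\R)$ (standard, since Hitchin homomorphisms are Anosov hence the action on the character variety is proper, cf.\ \cite{LAB06}) combined with the fibration structure $G\times_{G_\R}\mathcal{H}_S(G_\R)\cong\mathcal{H}_S(G)$; alternatively, properness follows directly from the fact that $G$-Hitchin homomorphisms are $B$-Anosov and $G$ acts properly on the Anosov locus. Contractibility of the quotient: the quotient $\underline{\mathcal{H}}_S(G)$ is identified with $\mathcal{H}_S(G_\R)/G_\R$, which is the Hitchin component in the $G_\R$-character variety, and this is contractible by Hitchin's parametrization \cite{HIT92} as a vector space of holomorphic differentials $\bigoplus_i H^0(S, K^{d_i})$. \textbf{The main obstacle} I anticipate is the careful bookkeeping in Step 2 — ensuring that a $G$-conjugacy between $G_\R$-Hitchin homomorphisms can be taken inside $G_\R$ (so that the orbit-map fibers really are single $G_\R$-cosets and the dimension count is exact), since $G_\R$ need not be its full $G$-normalizer; this is where Zariski density of the image and the real-form structure must be used most delicately, exactly paralleling the argument in \autoref{prop:GQF}(iv).
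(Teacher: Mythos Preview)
Your proposal is correct and follows essentially the same route as the paper: invoke Hitchin's result that $\mathcal{H}_S(G_\R)$ is an open smooth submanifold of $\Hom(\Gamma,G_\R)$ of dimension $(2g-2)\dim(G_\R)+\dim(G_\R)$, then use the action map $G\times\mathcal{H}_S(G_\R)\to\mathcal{H}_S(G)$ with $G_\R$-fibers to obtain the manifold structure and dimension, and deduce the final statement from Hitchin's contractibility of the Hitchin component. Your treatment is in fact more thorough than the paper's---you supply the Goldman/centralizer argument for smoothness in $\Hom(\Gamma,G)$ and correctly flag the fiber-identification (that a $G$-conjugacy between $G_\R$-Hitchin homomorphisms is realized in $G_\R$) as the delicate point, both of which the paper's proof asserts without justification.
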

\begin{proof}
It was proved by Hitchin \cite{HIT92} that $\mathcal{H}_{S}(G_{\R})\subset \Hom(\Gamma, G_{\R})$ is an open smooth submanifold of dimension $(2g-2)\cdot \textnormal{dim}(G_{\R}) + \textnormal{dim}(G_{\R}).$  Now consider the action map
$G\times \mathcal{H}_{S}(G_{\R}) \rightarrow \mathcal{H}_{S}(G).$  This map is smooth and surjective, and the fibers are isomorphic to $G_{\R}.$  This proves that $\mathcal{H}_{S}(G)$ is a smooth real manifold of real dimension $(2g-2)\cdot \textnormal{dim}(G_{\R}) + \textnormal{dim}_{\R}(G).$

The final statement follows from Hitchin \cite{HIT92}.
\end{proof}
In the foundational paper \cite{LAB06}, Labourie proved that every $\PSL(n, \R)$-Hitchin homomorphism is Anosov with respect to the Borel subgroup in $\PSL(n, \C).$  By basic representation theoretic considerations, this implies that every $G$-Hitchin homomorphism is $B$-Anosov when $G$ is of type $B_{n}, C_{n}$ and $G_{2}.$  It was later clarified (see \cite{FG06}) that every $G$-Hitchin homomorphism is Anosov with respect to the Borel $B<G.$ 

\subsection{Domains of discontinuity}
\label{subsec:domains}

Given an Anosov homomorphism $\rho: \Gamma\rightarrow G,$ a systematic theory has been developed by Kapovich-Leeb-Porti \cite{KLP13}, initiated by Guichard-Wienhard \cite{GW12}, which produces open subsets of flag varieties of $G$ on which $\Gamma$ acts freely, properly discontinuously, and cocompactly.
We recall some essential points of this theory, referring the reader to \cite{KLP13}\cite{DS20} for details.

A subset $I \subset W$ of the Weyl group $W$ that is convex for the \cv order and which contains the identity is an \emph{ideal} (i.e. $b\in I$ implies $a\in I$ for all $a\leq b$).  Let $w_{0}\in W$ denote the longest element.
An ideal is \emph{balanced} if $W = I \cup w_0I$ and $I \cap w_0I = \emptyset$.

Let $P_{D}<G$ be a parabolic subgroup and $W_{D}<W$ the corresponding subgroup: if $I\subset W$ is an ideal which is right-$W_{D}$ invariant, then the discussion in \autoref{sec: flag} implies it corresponds to a finite union of Schubert varieties in $\F\simeq G/P_{D}$ indexed by the cosets $I/W_{D}.$  

Let $P_{A}<G$ be a symmetric parabolic subgroup and consider the subgroup $W_{A}<W.$  An ideal which is left $W_{A}$-invariant and right $W_{D}$-invariant will be called an ideal of type $(P_{A}, \F).$  Given an ideal of type $(P_{A}, \F),$ the corresponding finite union of Schubert varieties in $\F$ is also a union of $P_{A}$-orbits in $\F.$

Given an ideal $I\subset W$ of type $(P_{A}, \F),$ we call the corresponding union of Schubert varieties the \emph{model thickening} and denote it by $\Phi_{o}^{I}\subset \F.$ More generally, to any $x=gP_{A}\in G/P_{A},$ we can associate the $g$-translate of the model thickening $\Phi_{x}^{I}:=g\cdot \Phi_{o}^{I}.$  Note that this is well defined since the model thickening $\Phi_{o}^{I}$ is left $P_{A}$-invariant.

Now, let $\rho: \Gamma\rightarrow G$ be a $P_{A}$-Anosov homomorphism with limit map
$\xi_{\rho}: \partial \Gamma\rightarrow G/P_{A}.$  Let $I\subset W$ be an ideal of type $(P_{A}, \F).$  The limit set $\Lambda_{\rho}^{I}\subset \F$ associated to these data is defined by
\begin{align}
\Lambda_{\rho}^{I}:=\bigcup_{t \in \partial \Gamma} \Phi_{\xi_{\rho}(t)}^{I}.
\end{align}
The limit set $\Lambda_{\rho}^{I}$ is a closed set, and therefore the complement
$\Omega_{\rho}^{I}:=\F \backslash \Lambda_{\rho}^{I}$ is open.  We can now state the fundamental theorem of Kapovich-Leeb-Porti \cite{KLP13}.

\begin{thm}\label{thm: KLP}
Let $\rho: \Gamma\rightarrow G$ be $P_{A}$-Anosov, where $G = \Aut(\F)$, and let $I\subset W$ be a balanced ideal of type $(P_{A}, \F).$  If $\Omega_{\rho}^{I}$ is non-empty, then the discrete group $\rho(\Gamma)$ acts freely, properly discontinuously, and cocompactly on $\Omega_{\rho}^{I}.$  
\end{thm}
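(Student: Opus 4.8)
The plan is to follow the proof of Kapovich--Leeb--Porti \cite{KLP13}, whose engine is the interplay between the convergence-type dynamics of $\rho(\Gamma)$ on $\F$ and the combinatorics of a balanced ideal. Fix an auxiliary Riemannian distance $d$ on the compact manifold $\F$, and recall that $\rho$ is injective with a continuous, $\rho$-equivariant, transverse limit map $\xi_{\rho}\colon\partial\Gamma\to G/P_{A}$ (transversality---$\xi_{\rho}(t),\xi_{\rho}(t')$ in general position for $t\neq t'$---uses that $P_{A}$ is symmetric). The first step is to record how escaping sequences act on $\F$: if $\gamma_{n}\in\Gamma$ with $\card{\gamma_{n}}\to\infty$, then, after passing to a subsequence, $\gamma_{n}\to t_{+}$ and $\gamma_{n}^{-1}\to t_{-}$ in $\Gamma\cup\partial\Gamma$, and the defining inequality of $P_{A}$-Anosov makes $(\rho(\gamma_{n}))$ uniformly regular in the directions of $\theta_{A}$, so it flag-converges to $\xi_{\rho}(t_{+})\in G/P_{A}$ while the inverse sequence flag-converges to $\xi_{\rho}(t_{-})$. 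Feeding this into the dynamics of regular sequences on $\F$ and using that $I$ is an ideal with $W=I\cup w_{0}I$, $I\cap w_{0}I=\varnothing$, one obtains: $\rho(\gamma_{n})|_{\F}$ converges locally uniformly on $\F\setminus\Phi^{I}_{\xi_{\rho}(t_{-})}$ to the closed set $\Phi^{I}_{\xi_{\rho}(t_{+})}$, with the quantitative refinement that on each compact subset of $\F\setminus\Phi^{I}_{\xi_{\rho}(t_{-})}$ the image $\rho(\gamma_{n})(\cdot)$ has diameter $O(e^{-\varepsilon\card{\gamma_{n}}})$ for a fixed $\varepsilon>0$ (coming from the linear lower bound on $\langle\alpha,a(\rho(\gamma_{n}))\rangle$). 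It is precisely the balanced hypothesis that pins the attracting and repelling loci of such a sequence to be the $\Phi^{I}$-thickenings of $\xi_{\rho}(t_{\pm})$, so that both are contained in $\Lambda^{I}_{\rho}$; dually, $\rho(\gamma_{n})$ uniformly expands a shrinking neighborhood of $\Phi^{I}_{\xi_{\rho}(t_{-})}$.

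Proper discontinuity and freeness then follow formally. If $K\subset\Omega^{I}_{\rho}$ is compact and $\rho(\gamma_{n})K\cap K\neq\varnothing$ for infinitely many distinct $\gamma_{n}$, then $\card{\gamma_{n}}\to\infty$, and for a subsequence as above $K$ is disjoint from $\Phi^{I}_{\xi_{\rho}(t_{-})}\subset\Lambda^{I}_{\rho}$, whence $\rho(\gamma_{n})(K)$ Hausdorff-converges into $\Phi^{I}_{\xi_{\rho}(t_{+})}\subset\Lambda^{I}_{\rho}$, which is at positive distance from the compact set $K\subset\Omega^{I}_{\rho}$---a contradiction; so the action on $\Omega^{I}_{\rho}$ is properly discontinuous. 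For freeness, any $\gamma\neq e$ has infinite order (as $\Gamma$ is torsion-free), with endpoints $\gamma^{\pm}\in\partial\Gamma$; applying the displayed dynamics to $(\gamma^{n})_{n}$ shows every fixed point of $\rho(\gamma)$ on $\F$ lies in $\Phi^{I}_{\xi_{\rho}(\gamma^{+})}\cup\Phi^{I}_{\xi_{\rho}(\gamma^{-})}\subset\Lambda^{I}_{\rho}$, so $\rho(\gamma)$ fixes no point of $\Omega^{I}_{\rho}$, and injectivity of $\rho$ upgrades this to a free action.

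Cocompactness is the step I expect to be the main obstacle, since properness says nothing about the quotient being compact---a properly discontinuous action on the complement of a closed invariant set need not be cocompact, as a parabolic cyclic subgroup of $\PSL(2,\C)$ acting on $\C=\CP^{1}\setminus\{\infty\}$ already shows---so here the balanced hypothesis must be used in an essential way. I would argue by contradiction. If $\rho(\Gamma)\backslash\Omega^{I}_{\rho}$ were noncompact, there would exist $z_{n}\in\Omega^{I}_{\rho}$ with $M_{n}:=\sup_{\gamma\in\Gamma}d(\rho(\gamma)z_{n},\Lambda^{I}_{\rho})\to 0$; choosing $\gamma_{n}$ with $d(\rho(\gamma_{n})z_{n},\Lambda^{I}_{\rho})\geq M_{n}/2$ and setting $w_{n}=\rho(\gamma_{n})z_{n}$ gives $d(\rho(\eta)w_{n},\Lambda^{I}_{\rho})\leq 2\,d(w_{n},\Lambda^{I}_{\rho})\to 0$ for all $\eta\in\Gamma$. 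After a further subsequence (using compactness of $\F$ and of $\{\Phi^{I}_{\xi_{\rho}(s)}\}_{s\in\partial\Gamma}$) one has $w_{n}\to w_{\infty}$ and the nearest thickening to $w_{n}$ is $\Phi^{I}_{\xi_{\rho}(t_{n})}$ with $t_{n}\to t\in\partial\Gamma$. Picking $\eta_{n}\in\Gamma$ with $\eta_{n}^{-1}\to t$ and $\card{\eta_{n}}$ calibrated so that the expansion factor $e^{\varepsilon\card{\eta_{n}}}$ from the first step is comparable to $1/d(w_{n},\Lambda^{I}_{\rho})$, the map $\rho(\eta_{n})$ rescales the $d(w_{n},\Lambda^{I}_{\rho})$-neighborhood of $\Phi^{I}_{\xi_{\rho}(t)}$ to unit size; since $\rho(\eta_{n})\Lambda^{I}_{\rho}=\Lambda^{I}_{\rho}$, this converts the vanishing preimage distance into a definite image distance $d(\rho(\eta_{n})w_{n},\Lambda^{I}_{\rho})\geq c_{0}>0$, contradicting $d(\rho(\eta)w_{n},\Lambda^{I}_{\rho})\to 0$. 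The delicate point is the bounded distortion hidden in ``rescales'': one must show that $\rho(\eta_{n})$, restricted to a shrinking neighborhood of $\Phi^{I}_{\xi_{\rho}(t)}$, is uniformly bi-Lipschitz after normalization, so that the small distance to \emph{all} of $\Lambda^{I}_{\rho}$ (not merely to the single image thickening $\Phi^{I}_{\xi_{\rho}(\eta_{n}\cdot t)}$) is magnified faithfully; the portion of $\Lambda^{I}_{\rho}$ lying outside that neighborhood poses no problem because it is simultaneously contracted toward the attracting thickening of $(\eta_{n})$, which the balanced condition keeps away from $\rho(\eta_{n})w_{n}$. Supplying this uniformity---from the regularity of $\theta_{A}$-regular sequences together with the compactness and transversality of $\xi_{\rho}$---is exactly where the detailed analysis of accumulation dynamics on flag manifolds in \cite{KLP13} does its real work.
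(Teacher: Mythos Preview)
The paper does not prove this statement at all: \autoref{thm: KLP} is simply attributed to Kapovich--Leeb--Porti \cite{KLP13} and quoted without argument, as background for the later deformation-theoretic results. So there is nothing in the paper to compare your proposal against beyond the bare citation.

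As a sketch of the Kapovich--Leeb--Porti argument, your treatment of properness and freeness is fine and standard. For cocompactness, your outline is in the right spirit---the mechanism in \cite{KLP13} is indeed an expansion argument in a neighborhood of the limit set, descending from a Sullivan-type principle---but the version you have written has a genuine soft spot you have correctly flagged. The step where you choose $\eta_{n}$ with $\eta_{n}^{-1}\to t$ \emph{and} with $\card{\eta_{n}}$ calibrated to $1/d(w_{n},\Lambda^{I}_{\rho})$ imposes two constraints at once, and you have not explained why they can be met simultaneously with the uniformity you need; relatedly, the claimed ``uniformly bi-Lipschitz after normalization'' control of $\rho(\eta_{n})$ on a shrinking tube around $\Phi^{I}_{\xi_{\rho}(t)}$ is exactly the nontrivial content, and your appeal to ``regularity of $\theta_{A}$-regular sequences together with compactness and transversality'' is a placeholder rather than an argument. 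In \cite{KLP13} this is handled by first proving that the $\rho(\Gamma)$-action is \emph{metrically expanding} at every point of $\Lambda^{I}_{\rho}$ (a local statement, obtained from the conical convergence of orbits to limit points and the balanced condition), and then invoking a general lemma that a properly discontinuous action on a compact metric space minus a closed invariant set is cocompact provided the action is expanding at that set. If you want a self-contained proof, I would restructure the cocompactness step along those lines rather than trying to globalize the rescaling directly.
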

We will refer to such a pair $(\rho, I)$ as a thickened Anosov homomorphism of type $(P_{A}, \F).$  

By \autoref{thm: KLP}, if $\Omega_{\rho}^{I}$ is non-empty, then the quotient
$\W_{\rho}^{I}:=\rho(\Gamma)\backslash \Omega_{\rho}^{I}$ is a compact complex manifold.  The goal of this paper is to study the relationship, in terms of deformation theory, between the Anosov homomorphism $\rho$ and the complex manifold $\W_{\rho}^{I}.$  

\subsection{Hausdorff dimension of limit sets}
\label{subsec:hdim}

As mentioned in the introduction, our results depend on the limit set $\Lambda_{\rho}^{I}$ being ``small''.  Let $(\rho, I)$ be a thickened Anosov homomorphisms of type $(P_{A}, \F).$ Given an integer $k\geq 0,$ we say that $(\rho, I)$ is \emph{$k$-small} if the limit set $\Lambda_\rho^{I}\subset \F$ is a null set for any Riemannian Hausdorff measure on $\F$ of codimension $k$, i.e.~if
\[
\mathcal{H}_{2N-k}(\Lambda_\rho^{I})=0, \text{where } N=\dim_\C \F.
\]
The notion of a null set is independent of the choice of Riemannian metric.

In particular, a pair $(\rho, I)$ as above is $k$-small if the Hausdorff dimension of its limit set is strictly less than $2N-k$.
Note that the larger $k$ is, the smaller the limit set of a $k$-small thickened Anosov homomorphism is required to be.
Our main results apply to $4$-small homomorphisms.

This condition is only interesting if it can be shown that $k$-small homomorphisms exist.
Fortunately, this can be done by bounding the Hausdorff dimension of the limit curve and model thickening separately.  Recall that the length $\ell_{D}: W/W_{D}\rightarrow \mathbb{Z}$ is defined to be the dimension of the Schubert cell in $\F$ indexed by the given element of $W/W_{D}.$  Given a balanced ideal $I\subset W$ of type $(P_{A}, \F),$ we define its length $\ell_{D}(I)$ to be the maximum value of the function $\ell_{D}: I/W_{D}\rightarrow \mathbb{Z}.$  
By \cite[Theorem 4.7]{DS20}, the Hausdorff dimension of the limit set satisfies
\[ \label{hdim ineq}
\hdim \Lambda_\rho^{I} \leq 2\ell(I) + \hdim \Xi_\rho
\]
and thus it follows immediately:
\begin{prop}
Let $(\rho, I)$ be a thickened Anosov homomorphism of type $(P_{A}, \F).$
If $\hdim \Xi_\rho < 2(N - \ell(I)) - k$, then $(\rho, I)$ is $k$-small.
\end{prop}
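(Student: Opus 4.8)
The proof is essentially immediate from the Hausdorff dimension inequality \eqref{hdim ineq} stated just above, so the plan is short. First I would recall the definition: $(\rho,I)$ being $k$-small means $\mathcal{H}_{2N-k}(\Lambda_\rho^I) = 0$, where $N = \dim_\C \F$. Since a Hausdorff measure $\mathcal{H}_s$ vanishes on any set whose Hausdorff dimension is strictly less than $s$, it suffices to show that $\hdim \Lambda_\rho^I < 2N - k$.

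Next I would simply combine the hypothesis with the inequality \eqref{hdim ineq} from \cite[Theorem 4.7]{DS20}. That inequality gives
\[
\hdim \Lambda_\rho^I \leq 2\ell(I) + \hdim \Xi_\rho.
\]
Under the assumption $\hdim \Xi_\rho < 2(N - \ell(I)) - k$, the right-hand side is bounded above by $2\ell(I) + 2(N - \ell(I)) - k = 2N - k$, with strict inequality. Hence $\hdim \Lambda_\rho^I < 2N - k$, and therefore $\mathcal{H}_{2N-k}(\Lambda_\rho^I) = 0$, which is exactly the $k$-small condition. The independence of this conclusion from the choice of Riemannian metric was already noted in \autoref{subsec:hdim}.

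There is no real obstacle here; the only thing to be careful about is the bookkeeping between $\ell(I)$ (the length appearing in \eqref{hdim ineq}) and $\ell_D(I)$ (the length relative to $W_D$ used when $\F$ is not the complete flag variety), and making sure the inequality \eqref{hdim ineq} is being invoked with the convention matching the statement. Assuming the notation is consistent with \cite{DS20}, the argument is a one-line substitution, and the proposition follows.
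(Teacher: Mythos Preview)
Your proposal is correct and matches the paper's approach exactly: the paper simply states that the proposition ``follows immediately'' from the inequality \eqref{hdim ineq}, and your argument is precisely the one-line substitution you describe. Your caution about $\ell(I)$ versus $\ell_D(I)$ is reasonable but unnecessary here, since the paper uses $\ell(I)$ consistently in both the inequality and the proposition.
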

Depending on the balanced ideal $I$, the quantity $N-\ell(I)$ can be as large as $\left \lfloor \frac{N}{2} \right \rfloor$, but in \cite[Theorem 4.1(ii)]{DS20} it is shown that this difference is at least $3$ when $G$ has no factors of small rank (see the above-cited reference for the precise list of low-rank exceptions).
Thus for such $G$, any homomorphism with $\hdim \Xi_\rho < 2$ is $4$-small independent of the balanced ideal.

We record the following important result for surface groups which combines the results of the authors \cite{DS20} and those of Pozzetti-Sambarino-Wienhard \cite{PSW192}\cite{PSW19}.

\begin{thm}\label{thm: PSW}
Let $S$ be a closed orientable surface, $\Gamma=\pi_{1}(S)$ a closed surface group, and $G$ a complex simple Lie group of adjoint type.
\begin{rmenumerate}
\item If $\rho: \Gamma \rightarrow G$ is $G$-quasi-Fuchsian, then the limit curve $\Xi_{\rho}\subset G/B$ satisfies $\hdim(\Xi_{\rho})<2.$
\item Suppose $G$ is not of type $F_{4}, E_{6}, E_{7}, E_{8}.$  If $\rho: \Gamma\rightarrow G_{\R}$ is $G$-Hitchin, then the limit curve $\Xi_{\rho}\subset G/B$ satisfies $\hdim(\Xi_{\rho})=1.$
\item There exists a connected open $G$-invariant subset $\mathcal{U}\subset \A\subset \Hom(\Gamma, G)$ containing the sets of $G$-quasi-Fuchsian such that the function
\begin{align}
\hdim: \mathcal{U}&\rightarrow \R \\
                              \rho &\mapsto \hdim(\Xi_{\rho})
\end{align}
is continuous, $G$-invariant, and satisfies $\hdim(\rho) < 2$ for all $\rho\in \mathcal{U}.$

If $G$ is not of type $F_{4}, E_{6}, E_{7}$ or $E_{8},$ then $\mathcal{U}$ can be chosen to also contain the set of $G$-Hitchin homomorphisms.
\item Suppose $G$ is not of type $A_{1}, A_{2}, A_{3}, B_{2}$ and $\rho: \Gamma\rightarrow G$ is in the neighborhood $\mathcal{U}\subset\A$ of $G$-quasi-Fuchsian homomorphisms provided by (iii). Then for every balanced ideal $I$ of type $(P_{A}, \F),$ the thickened Anosov homomorphism $(\rho, I)$ is $4$-small.
\item Suppose $G$ is not of type $A_{1}, A_{2}, A_{3}, B_{2}, F_{4}, E_{6}, E_{7}$ or $E_{8}.$ Let $\rho: \Gamma\rightarrow G$ be in the neighborhood $\mathcal{U}\subset\A$ of $G$-quasi-Fuchsian and $G$-Hitchin homomorphisms provided by (iii).  Then, for every balanced ideal $I$ of type $(P_{A}, \F),$ the thickened Anosov homomorphism $(\rho, I)$ is $4$-small.
\end{rmenumerate}
\end{thm}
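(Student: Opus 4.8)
The plan is to assemble the five parts from two kinds of input. The first is a bound on the Hausdorff dimension of the limit \emph{curve} $\Xi_\rho\subset G/B$ of a $G$-quasi-Fuchsian or $G$-Hitchin homomorphism, supplied by the authors' earlier work \cite{DS20} together with that of Pozzetti--Sambarino--Wienhard \cite{PSW192}\cite{PSW19}. The second is the passage from $\hdim\Xi_\rho$ to $\hdim\Lambda_\rho^I$, furnished by \cite[Theorem~4.7]{DS20} and the Proposition immediately preceding this theorem, together with the rank estimate $N-\ell(I)\geq 3$ (with $N=\dim_\C\F$) from \cite[Theorem~4.1(ii)]{DS20}, valid for $G$ simple with no small-rank factor.

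For (i): up to conjugation in $G$, a $G$-quasi-Fuchsian homomorphism is $\iota_G\circ\rho_0$ with $\rho_0\colon\Gamma\to\PSL(2,\C)$ quasi-Fuchsian, and since $\iota_G\circ\rho_0$ is $B$-Anosov, uniqueness of limit maps forces its limit curve to be the image of the quasicircle $\Xi_{\rho_0}\subset\CP^1$ under the $\iota_G$-equivariant holomorphic embedding $\CP^1\hookrightarrow G/B$ given by an orbit of the principal $\PSL(2,\C)$. A holomorphic embedding is locally Lipschitz, hence preserves Hausdorff dimension, so $\hdim\Xi_\rho=\hdim\Xi_{\rho_0}<2$, the strict inequality being the classical fact that a quasicircle has Hausdorff dimension strictly below $2$; this is part of \cite{DS20}. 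For (ii): the limit curve of a $G$-Hitchin homomorphism is a homeomorphic image of $\partial\Gamma\cong S^1$, so $\hdim\Xi_\rho\geq 1$ automatically, while the reverse inequality $\hdim\Xi_\rho\leq 1$ is the rectifiability of such limit curves established in \cite{PSW192}\cite{PSW19}, valid when $G$ avoids types $F_4,E_6,E_7,E_8$ --- the range in which the requisite $(1,1,2)$-hyperconvexity of $G$-Hitchin homomorphisms is available.

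For (iii): I would let $\mathcal{U}$ be the connected component --- containing the locus of $G$-quasi-Fuchsian homomorphisms (respectively the union of the $G$-quasi-Fuchsian and $G$-Hitchin loci) --- of the set of $(1,1,2)$-hyperconvex homomorphisms $\rho\in\A$ with $\hdim\Xi_\rho<2$. This set is open, because $(1,1,2)$-hyperconvexity is an open condition and, on the hyperconvex locus, $\rho\mapsto\hdim\Xi_\rho$ is continuous, both by \cite{PSW192}\cite{PSW19}. It contains the stated loci: by \cite{PSW192}\cite{PSW19} these consist of $(1,1,2)$-hyperconvex homomorphisms, and by (i) and (ii) they satisfy $\hdim\Xi_\rho<2$. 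The union of the $G$-quasi-Fuchsian and $G$-Hitchin loci is connected, since each is connected and each contains the $G$-Fuchsian locus, so the distinguished component is well defined; it is $G$-invariant because conjugation preserves $(1,1,2)$-hyperconvexity and $\hdim\Xi_\rho$, and the connected group $G$ must fix the component that is a neighborhood of the $G$-invariant union of loci. Continuity and $G$-invariance of $\hdim$ on $\mathcal{U}$ then follow.

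Finally, (iv) and (v) are formal. For $G$ complex simple of adjoint type, not of type $A_1,A_2,A_3,B_2$, \cite[Theorem~4.1(ii)]{DS20} gives $N-\ell(I)\geq 3$ for every balanced ideal $I$ of type $(P_A,\F)$, so $2(N-\ell(I))-4\geq 2>\hdim\Xi_\rho$ for every $\rho\in\mathcal{U}$ by (iii), and the preceding Proposition yields that $(\rho,I)$ is $4$-small; this is (iv). For (v) one excludes in addition $F_4,E_6,E_7,E_8$, so that (iii) provides a $\mathcal{U}$ that also contains the $G$-Hitchin locus, and the identical computation applies. I expect the main obstacle to be (iii): one must extract from \cite{PSW192}\cite{PSW19} the precise openness of $(1,1,2)$-hyperconvexity and the continuity of $\rho\mapsto\hdim\Xi_\rho$ on that locus, check that $G$-quasi-Fuchsian and $G$-Hitchin homomorphisms satisfy the hypotheses of those results (in particular the type restrictions behind (ii)), and arrange the connectedness and $G$-invariance so that a single set $\mathcal{U}$ enjoys all the asserted properties at once.
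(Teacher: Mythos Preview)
Your proposal is correct and takes essentially the same approach as the paper: cite \cite{DS20} for (i), \cite{PSW192}\cite{PSW19} for (ii)--(iii), and then derive (iv)--(v) from the length bound $\ell(I)\leq N-3$ of \cite[Theorem~4.1(ii)]{DS20} together with the inequality $\hdim\Lambda_\rho^I\leq 2\ell(I)+\hdim\Xi_\rho$. The paper's own proof is considerably terser---for (i)--(iii) it simply defers to the cited references without unpacking them---whereas you supply an explicit construction of $\mathcal{U}$ via the $(1,1,2)$-hyperconvex locus and spell out the Lipschitz argument behind (i); but the logical structure and the computation for (iv)--(v) are identical.
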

\begin{remark}
The exclusion of types $A_{1}, A_{2}, A_{3}$ and $B_{2}$ above are essential to state uniform results.  Meanwhile, the exclusion of the exceptional types $F_{4}, E_{6}, E_{7}$ and $E_{8}$ is likely to be unnecessary.  We have been informed of forthcoming work by Sambarino which extends (ii) and (iii) to these exceptional groups, in which case they can be removed from the list of exceptions in (v) as well. 
\end{remark}
\begin{proof}
Item (i) is proved in \cite{DS20}.  Items (ii) and (iii) follow from the work of Pozzetti-Sambarino-Wienhard \cite{PSW19}\cite{PSW192}.  

Finally, it is proved in \cite{DS20} that if $G$ is not of type $A_{1}, A_{2}, A_{3}$ or $B_{2},$ then every balanced ideal $I$ of type $(P_{A}, \F)$ satisfies $\ell(I)\leq N-3$ where $N$ is the complex dimension of $\F.$  Therefore, given any $\rho:\Gamma\rightarrow G$ such that $\hdim(\Xi_{\rho})<2$ and using inequality \eqref{hdim ineq} we obtain
\begin{align}
\hdim \Lambda_{\rho}^{I}\leq 2\ell(I) + \hdim \Xi_{\rho}< 2(N-3)+2=2N-4.
\end{align}
Hence, $(\rho, I)$ is $4$-small.  Hence, (i), (ii), and (iii) imply (iv) and (v).
\end{proof}

\section{The Anosov family}
\label{sec:anosov}

\subsection{Complex analytic spaces}

In this paper, we will assume some familiarity with the basic theory of complex analytic spaces and coherent sheaves.
We refer the reader to the books of Fischer \cite{FIS76} and Grauert-Remmert \cite{GR84} for details.  Below, we give a rapid review of the basic concepts we will use.

A complex analytic space is a $\C$-locally ringed Hausdorff space $(X, \mathcal{O}_{X})$ locally isomorphic to the vanishing locus of finitely many holomorphic functions on an open subset $U\subset \C^{k_{U}}$
equipped with its structure sheaf.

We call a morphism $(f, f^{\sharp}): (X, \mathcal{O}_{X})\rightarrow (Y, \mathcal{O}_{Y})$ between complex analytic spaces a holomorphic map and usually we suppress the sheaf morphism $f^{\sharp}: \mathcal{O}_{Y}\rightarrow f_{\star}\mathcal{O}_{X}$ and just say that $f: X\rightarrow Y$ is a holomorphic map.

Given a complex analytic space $(X, \mathcal{O}_{X}),$ a point $x\in X$ is a smooth point if there exists a connected open neighborhood $U\subset X$ of $x$ such that $(U, \mathcal{O}_{X}|_{U})$ is isomorphic to a complex manifold.  The set of smooth points is open in $X$.

The tangent sheaf $\Theta_{X}$ of a complex analytic space $(X, \mathcal{O}_{X})$ is the sheaf of $\C$-linear derivations of the structure sheaf $\mathcal{O}_{X}.$  The tangent sheaf $\Theta_{X}$ is a coherent sheaf of $\mathcal{O}_{X}$-modules.

Next, we define a notion of a smooth holomorphic map in this setting:
\begin{defn}\label{def: smooth}
Let $f: X\rightarrow Y$ be a holomorphic map.  Then we say $f$ is \emph{smooth} if:
\begin{itemize}
\item For all $x\in X,$ there exists an open neighborhood $U\subset X$ of $x$ and an open neighborhood $V\subset Y$ of $f(x)$ such that $f(U)=V.$  
\item There exists an open subset $W\subset \C^{k}$ where $k$ is a non-negative integer and an isomorphism $U\simeq V\times W$ such that the diagram
\begin{center}
\begin{tikzcd}
U \arrow{r}{\simeq} \arrow{d}{f}
& V\times W \arrow{dl} \\
V
\end{tikzcd}
\end{center}
commutes.
\end{itemize}
\end{defn}
If $f: X\rightarrow Y$ is a holomorphic map between complex manifolds, then $f$ is smooth in this sense if and only if it is a submersion.   The composition of smooth maps is smooth.

Finally, suppose we are given complex analytic spaces $X,Y,Z$ and holomorphic maps $f: X\rightarrow Z$ and $g: Y\rightarrow Z.$  Then, there exists a (unique, up to unique isomorphism) complex analytic space $X\times_{Z} Y$ and 
a commutative diagram of holomorphic maps
\begin{center}
\begin{tikzcd}
X\times_{Z} Y \arrow{r} \arrow{d}
& Y \arrow{d}{g}\\
X \arrow{r}{f}
& Z.
\end{tikzcd}
\end{center}
The space $X\times_{Z} Y$ is called the fiber product and the projection $X\times_{Z} Y\rightarrow X$ is called the base change.  If $g$ is a smooth holomorphic map, then the base change $X\times_{Z} Y\rightarrow X$ is smooth, and in this setting we will use the notation $f^{\star}Y:=X\times_{Z} Y$ and call $f^{\star}Y$ the pullback of the smooth holomorphic map $g: Y\rightarrow Z$ via $f.$  

\subsection{Families of complex manifolds}
In this section, we recall the basic setup of complex analytic deformation theory which will be used throughout this paper.
Details can be found in \cite{Palamodov}, for example.

\begin{defn}
A complex analytic family of compact complex manifolds is a triple $(\Y, B, p)$ where $\Y$ and $B$ are complex analytic spaces and
$p: \Y \rightarrow B$ is a proper smooth holomorphic map.
\end{defn}
By the definition of smoothness, the fibers of a complex analytic family are compact complex manifolds.

The complex analytic space $B$ appearing in a complex analytic family $(\Y, B, p)$ is called the \emph{base}.
A \emph{pointed complex analytic family} is defined analogously, but with the base replaced by a pointed space $(B,b)$.

Let $(\Y, B, p)$ be a complex analytic family.  For $b \in B,$ we denote the fiber of $p$ by $\Y_{b}:=p^{-1}(b).$  
\begin{defn}
A complex analytic family $(\Y, B, p)$ is versal at $b \in B$ (or, the pointed family $(\Y,(B,b),p)$ is versal) if:
\begin{itemize}
\item For any pointed complex analytic family $(\Y',B',b')$ equipped with a fixed isomorphism $\Y_b \simeq \Y'_{b'}$, there exists a connected open set $U \subset B'$ with $b' \in U,$ and a holomorphic map $F: U\rightarrow B$ such that $F(b')=b.$
\item There is an isomorphism $F^{\star}\Y\simeq \Y^{\prime}$ extending the isomorphism $\Y_b \simeq \Y'_{b'}.$ 
\end{itemize}
If the above maps are unique, then the family $(\Y, B, p)$ is called universal at $b\in B.$
\end{defn}
Since smoothness is preserved by base change, $F^{\star}\Y$ is a complex analytic family over $U$.

As introduced in the previous section, the tangent sheaf of a complex analytic space $X$ is denoted by $\Theta_{X}$.  We record here the fundamental theorem of Kuranishi \cite{KUR62} and Grauert \cite{Grauert74}.  See \cite{Balaji} for a presentation and discussion that more closely follows our approach.
\begin{thm}\label{kur family}
Let $X$ be a compact complex manifold.  Then there exists an open set $U\subset H^{1}(X, \Theta_{X})$ containing $0$ and a holomorphic map
\begin{align}
Kr: U\rightarrow H^{2}(X, \Theta_{X})
\end{align}
satisfying $Kr(0)=0,$ such that $Kr^{-1}(0):=\mathcal{B}$ is the base of a versal complex analytic family $(\Y, (\mathcal{B},0), p)$ with $\Y_{0}=X.$  If $H^0(X, \Theta_{X})=\{0\},$ then the family $(\Y, (\mathcal{B},0), p)$ is universal.
\end{thm}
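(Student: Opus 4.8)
The plan is to follow the classical analytic approach of Kodaira--Spencer, Nirenberg, Kuranishi, and Grauert, reducing the integrability of a deformation of $X$ to a finite-dimensional equation via Hodge theory. Fix a Hermitian metric on $X$, let $\mathcal{H}^{0,q}$ denote the space of $\Theta_X$-valued harmonic $(0,q)$-forms, and let $\mathbb{H}$ and $\mathcal{G}$ be the associated harmonic projection and Green operator for the $\bar\partial$-Laplacian on $A^{0,\bullet}(X,\Theta_X)$. A pointed deformation of the complex structure on $X$ is encoded by a tensor $\phi\in A^{0,1}(X,\Theta_X)$ with $\|\phi\|$ small and satisfying the Maurer--Cartan equation $\bar\partial\phi=\tfrac12[\phi,\phi]$, two such tensors defining the same pointed family precisely when they are related by the natural action of the group of exponentials of small $C^\infty$ sections of $\Theta_X$. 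Imposing the Kuranishi gauge $\bar\partial^*\phi=0$, the Maurer--Cartan equation becomes equivalent to the system $\phi=\eta+\tfrac12\bar\partial^*\mathcal{G}[\phi,\phi]$ together with $\mathbb{H}[\phi,\phi]=0$, where $\eta=\mathbb{H}\phi\in\mathcal{H}^{0,1}\simeq H^1(X,\Theta_X)$.

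First I would solve the equation $\phi=\eta+\tfrac12\bar\partial^*\mathcal{G}[\phi,\phi]$ for $\phi$ in terms of $\eta$. Working in H\"older spaces $C^{k,\alpha}$ (or Sobolev spaces), the operator $\bar\partial^*\mathcal{G}$ gains a derivative, so the right-hand side defines an analytic map that is a contraction for $\eta$ in a small ball $U\subset H^1(X,\Theta_X)$; the analytic implicit function theorem in Banach spaces then yields a unique solution $\phi(\eta)$ depending holomorphically on $\eta$, and elliptic regularity shows each $\phi(\eta)$ is $C^\infty$ along $X$. Setting $Kr(\eta):=\mathbb{H}[\phi(\eta),\phi(\eta)]\in\mathcal{H}^{0,2}\simeq H^2(X,\Theta_X)$, the reduction above shows $\phi(\eta)$ is integrable precisely when $Kr(\eta)=0$, so $\mathcal{B}:=Kr^{-1}(0)\subset U$ is the desired parameter space with $Kr(0)=0$. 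The tensors $\{\phi(\eta)\}_{\eta\in\mathcal{B}}$ determine an integrable almost complex structure on $X\times\mathcal{B}$ for which the second projection $p$ is holomorphic (invoking the Newlander--Nirenberg theorem, in the form with holomorphic dependence on parameters); verifying that $p$ is proper and smooth in the sense of \autoref{def: smooth}, using the holomorphic dependence of $\phi(\eta)$ on $\eta$ and that $\phi(0)=0$, produces the family $(\Y,(\mathcal{B},0),p)$ with $\Y_0=X$.

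It then remains to prove versality, which will be the analytically heaviest step. Given another pointed family $(\Y',B',b')$ with a fixed isomorphism $\Y'_{b'}\simeq X$, after shrinking $B'$ one represents it by a holomorphic family of Beltrami tensors $\psi(s)$ with $\psi(b')$ gauge-equivalent to $0$; one must produce a holomorphic map $F\colon U'\to\mathcal{B}$ with $F(b')=0$ together with a gauge transformation exhibiting $\psi(s)$ as equivalent to the Kuranishi tensor $\phi(F(s))$, i.e.\ an isomorphism $F^\star\Y\simeq\Y'$ extending the given one over $b'$. One obtains this by another application of the analytic implicit function theorem: using the Kuranishi gauge and the Hodge decomposition one solves simultaneously, and holomorphically in $s$, for $F(s)$ (with leading term $F(s)=\mathbb{H}\psi(s)+O(\|\psi(s)\|^2)$) and for the gauge parameter making $\psi(s)$ and $\phi(F(s))$ gauge-equivalent. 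Finally, when $H^0(X,\Theta_X)=\{0\}$ there are no nonzero holomorphic vector fields on $X$, hence no nontrivial infinitesimal gauge freedom preserving the base point, so the classifying data $(F,F^\star\Y\simeq\Y')$ is forced to be unique and the family is universal. The main obstacle throughout will be purely analytic: choosing Banach spaces of sections in which the relevant elliptic operators are bounded with the correct mapping properties and the nonlinear terms are analytic, and then upgrading the weak solutions so produced to genuine holomorphic families of compact complex manifolds.
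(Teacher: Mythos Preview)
The paper does not supply its own proof of this statement: it is recorded as the ``fundamental theorem of Kuranishi and Grauert'' with citations to \cite{KUR62}, \cite{Grauert74}, and \cite{Balaji}, and no argument is given in the text. Your sketch follows the classical Kuranishi construction (Hodge-theoretic gauge fixing, reduction of Maurer--Cartan to a finite-dimensional obstruction map via the Green operator and the analytic implicit function theorem, then versality by a second implicit-function argument), which is precisely the content of the cited references; so there is nothing in the paper to compare against beyond noting that you have outlined the standard proof the authors are invoking.
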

The family provided by \autoref{kur family} is called the \emph{Kuranishi family}, and $Kr$ is the \emph{Kuranishi map}.  If the Kuranishi map is zero, then the base of the Kuranishi family is a connected open neighborhood of $0\in H^{1}(X, \Theta_{X}),$ and is therefore smooth.

If $(\Y, B, p)$ is a complex analytic family, then smoothness of the map $p: \Y\rightarrow B$ implies that the induced map $\Theta_{\Y}\rightarrow p^{\star}\Theta_{B}$ is surjective. Hence, there is an exact sequence of sheaves of $\O_{\Y}$-modules
\begin{equation}
\label{eqn:pre-ks-seq}
0\rightarrow \Theta_{\Y / B} \rightarrow \Theta_{\Y} \rightarrow p^{*}\Theta_{B}\rightarrow 0
\end{equation}
where the \emph{vertical} tangent sheaf $\Theta_{\Y/B}$ is the kernel of the map $\Theta_{\Y}\rightarrow p^{\star}\Theta_{B}.$  Furthermore, since $p$ is smooth, the sheaf $\Theta_{\Y/B}$ is locally free.

Given $b \in B$, let $\iota_{b}: \Y_{b}\rightarrow \Y$ denote the inclusion.  Applying $\iota_{b}^{\star}$ to \eqref{eqn:pre-ks-seq} yields a short exact sequence of sheaves
of $\O_{\Y_{b}}$-modules
\begin{align}\label{eqn:ks-seq}
0\rightarrow \Theta_{\Y_{b}}\rightarrow \iota_{b}^{\star} \Theta_{\Y}\rightarrow \iota_{b}^{\star}p^{\star} \Theta_{B}\rightarrow 0.
\end{align}
There is an isomorphism $\iota_{b}^{\star}p^{\star}\Theta_{B} \simeq T_{b}\B \otimes_{\C} \O_{\Y_{b}},$ and since $\Y_{b}$ is compact, this implies $H^{0}(\Y_{b}, \iota_{b}^{\star}p^{\star}\Theta_{B})\simeq T_{b}B.$ Therefore, taking the connecting homomorphism in the long exact sequence in sheaf cohomology associated to \eqref{eqn:ks-seq} yields a complex linear map
\begin{align}
\KS_{b}: T_{b}B \rightarrow H^{1}(\Y_{b}, \Theta_{\Y_{b}})
\end{align}
which is called the \emph{Kodaira-Spencer map} of the pointed family $(\Y,(B,b),p)$.
The pointed family is called \emph{effective} if $\KS_{b}$ is injective, and \emph{complete} if $\KS_{b}$ is surjective.  

We say that a family $(\Y,B,p)$ is effective, complete, or universal if the corresponding condition holds for every associated pointed family, that is, for every $b \in B$.

\begin{prop}
\label{prop:universality-criterion}
Let $(\Y, (B,b), p)$ be a pointed complex analytic family that is effective and complete.
(That is, suppose $\KS_b$ is an isomorphism.)
Furthermore suppose $b$ is a smooth point of $B.$
Then the pointed family $(\Y, (B,b), p)$ is versal.  If $H^{0}(\Y_{b}, \Theta_{\Y_{b}})=\{0\},$ then the pointed family $(\Y, (B,b), p)$ is universal.
\end{prop}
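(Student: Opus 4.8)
The plan is to compare the given pointed family with the Kuranishi family of the fibre $\Y_b$: the universal property of the latter produces a holomorphic map of bases, and the two hypotheses (smoothness of $b$ and $\KS_b$ an isomorphism) will force this map to be a local biholomorphism, after which versality and universality transfer formally. Concretely, set $X := \Y_b$ and apply \autoref{kur family} to obtain the Kuranishi map $Kr \colon U \to H^2(X,\Theta_X)$ on an open set $U \subseteq H^1(X,\Theta_X)$ containing $0$, together with the versal Kuranishi family $(\mathcal{V},(\mathcal{B},0),q)$ over $\mathcal{B} = Kr^{-1}(0)$ with central fibre $\mathcal{V}_0 \simeq X$. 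Applying versality of this family to $(\Y,(B,b),p)$, equipped with the identity isomorphism $\Y_b \simeq \mathcal{V}_0$, yields a connected open neighbourhood $U' \subseteq B$ of $b$, a holomorphic map $F \colon U' \to \mathcal{B}$ with $F(b)=0$, and an isomorphism $F^{\star}\mathcal{V} \simeq \Y|_{U'}$ restricting to the identity on $\Y_b$.

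Next I would use naturality of the Kodaira--Spencer construction under base change. Since $\KS$ is the connecting homomorphism of the short exact sequence \eqref{eqn:ks-seq}, obtained by restricting \eqref{eqn:pre-ks-seq} to a fibre, and the isomorphism $F^{\star}\mathcal{V}\simeq \Y|_{U'}$ is compatible with the sequences \eqref{eqn:pre-ks-seq} for the two families, one gets a factorization $\KS_b = \KS_0^{\mathcal{V}}\circ dF_b$, where $\KS_0^{\mathcal{V}}\colon T_0\mathcal{B}\to H^1(X,\Theta_X)$ is the Kodaira--Spencer map of the Kuranishi family at $0$. By construction of $\mathcal{B}$ as a closed analytic subspace of $H^1(X,\Theta_X)$ cut out by a map vanishing to second order at $0$, the map $\KS_0^{\mathcal{V}}$ is the canonical inclusion $T_0\mathcal{B}\hookrightarrow H^1(X,\Theta_X)$. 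As $\KS_b$ is an isomorphism, this forces $\KS_0^{\mathcal{V}}$ to be surjective and $dF_b$ injective; hence $T_0\mathcal{B}=H^1(X,\Theta_X)$, so $\mathcal{B}$ is smooth at $0$ (equivalently $Kr\equiv 0$ near $0$) and $dF_b$ is an isomorphism. Since $B$ is smooth at $b$ by hypothesis, both $B$ near $b$ and $\mathcal{B}$ near $0$ are complex manifolds, and the inverse function theorem shows that, after shrinking $U'$, $F$ is a biholomorphism onto an open neighbourhood $\mathcal{B}'$ of $0$ in $\mathcal{B}$.

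It then remains to transfer the conclusions along $F$. Under the isomorphism above, $(\Y|_{U'},(U',b),p)$ is the pullback of the Kuranishi family restricted to $(\mathcal{B}',0)$ by the biholomorphism $F$; hence, given any test family $(\Y',(B',b'),p')$ with $\Y'_{b'}\simeq \Y_b$, the comparison map $V\to \mathcal{B}'$ (on some open $V\subseteq B'$ containing $b'$) and the isomorphism supplied by versality of the Kuranishi family may be post-composed with $F^{-1}$ to produce exactly the data required by the definition of versality for $(\Y,(B,b),p)$. If moreover $H^{0}(\Y_b,\Theta_{\Y_b})=\{0\}$, then the Kuranishi family is universal by \autoref{kur family}, and post-composition with the biholomorphism $F^{-1}$ preserves uniqueness of the comparison map, so $(\Y,(B,b),p)$ is universal.

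I expect the crux to be the naturality identity $\KS_b = \KS_0^{\mathcal{V}}\circ dF_b$ together with the identification of $\KS_0^{\mathcal{V}}$ with the tautological inclusion $T_0\mathcal{B}\hookrightarrow H^1(X,\Theta_X)$: this amounts to checking that the connecting homomorphisms of the two exact sequences \eqref{eqn:ks-seq} intertwine the base-change isomorphism and the differential $dF_b$, which is a standard but slightly delicate diagram chase in sheaf cohomology. Everything downstream — the inverse function theorem step and the transfer of (uni)versality — is formal.
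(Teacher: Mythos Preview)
Your proposal is correct and follows essentially the same route as the paper's proof: both pull back the Kuranishi family via versality to get a map $F$, identify $d(\iota\circ F)=\KS_b$ (equivalently your factorization $\KS_b=\KS_0^{\mathcal V}\circ dF_b$ with $\KS_0^{\mathcal V}$ the tautological inclusion), apply the inverse function theorem to conclude $F$ is a local biholomorphism and $\mathcal B$ is smooth at $0$, and then transfer (uni)versality from the Kuranishi family. The only cosmetic difference is that the paper works directly with the composite $\iota\circ F$ into $H^1(X,\Theta_X)$, whereas you first isolate the factorization through $T_0\mathcal B$; the content is the same.
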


\begin{proof}
Let $(\mathcal{B},0)$ be the base of the Kuranishi family.  By \autoref{kur family}, the Kuranishi family is versal.  Hence, there exists a pointed open set $(U,b) \subset (\mathcal{B},b)$ and a pointed holomorphic map
\begin{align}
F: (U,b)\rightarrow (\mathcal{B}, 0)
\end{align}
such that the pullback of the Kuranishi family along $F$ is isomorphic to the restriction of $\Y$ to $U.$  

Composing with the inclusion $\iota: \mathcal{B} \rightarrow H^{1}(X, \Theta_{X})$ gives a holomorphic map of pointed complex manifolds $\iota\circ F: (U,b)\rightarrow \left ( H^{1}(X, \Theta_{X}), 0 \right )$ such that $d(\iota\circ F): T_{b}B\rightarrow H^{1}(X, \Theta_{X})$ is equal to the Kodaira-Spencer map $\KS_{b}$.  By assumption, $d(\iota\circ F): T_{b}B\rightarrow H^{1}(X, \Theta_{X})$ is an isomorphism, and therefore by the implicit function theorem, upon shrinking $B,$ the map
$\iota\circ F: (U,b)\rightarrow H^{1}(X, \Theta_{X})$ is a biholomorphism onto an open neighborhood of $0\in H^{1}(X, \Theta_{X}).$  

Therefore, the inclusion $\iota: \mathcal{B}\rightarrow H^{1}(X, \Theta_{X})$ is a biholomorphism onto an open neighborhood of $0\in H^{1}(X, \Theta_{X}),$ which implies that $(\mathcal{B}, 0)$ is smooth and 
$F: (U,b)\rightarrow (\mathcal{B}, 0)$ is a biholomorphism.  Since (uni)-versality is an isomorphism invariant, applying \autoref{kur family} the family $(\Y, (B, b), p)$ is versal, and universal if $H^{0}(X, \Theta_{X})=\{0\}.$  This completes the proof.
\end{proof}

\subsection{Families over Anosov homomorphisms}\label{Afam}

Let $\Gamma$ be a word hyperbolic group and $G=\Aut_{0}(\F)$ where $\F$ is a complex flag variety.  Then, $G$ is a connected complex semisimple Lie group of adjoint type, and it also carries the structure of a semisimple affine algebraic group over $\C.$  Throughout this paper, we will always assume that $G$ is of the above form to ensure that $\g\simeq H^{0}(\F, \Theta_{\F}).$

By \cite{SIK12}, the homomorphism space $\Hom(\Gamma, G)$ admits the structure of an affine scheme of finite type over $\C.$  There is a functor, called analytification, from the category of schemes locally of finite type over $\C$ to the category of complex analytic spaces (see \cite{NEE07}): applying this functor to the homomorphism scheme $\Hom(\Gamma, G)$ yields a complex analytic space.  In this paper, whenever we write $\Hom(\Gamma, G)$ we equip it with this complex analytic structure.  Note that the topology on this complex analytic space agrees with the compact-open topology.

Fix $P_{A}<G$ a symmetric parabolic subgroup and let $\A \subset \Hom(\Gamma, G)$ denote the open subset consisting of $P_{A}$-Anosov homomorphisms.  We equip $\A$ with its induced complex analytic structure.
Let $I$ be a balanced ideal of type $(P_{A}, \F).$  We define an open subset $\A_{I}\subset \A$ by the condition $\rho\in \A_{I}$ if and only if $\Omega_{\rho}^{I}$ is non-empty.  Clearly, if $(\rho, I)$ is $k$-small for any $k\geq 0$, then $\rho\in \A_{I}.$

For certain $\mathcal{F}$ and $I$, it may be the case that $\A_{I}$ is empty.  To avoid a nonemptiness hypotheses in many of our statements, it is our standing assumption that whenever we speak of $\A_{I}$ in this paper, it is assumed to be non-empty.  

The \emph{universal domain} is the open set $\Omega^{I}\subset \F \times \A_{I}$ defined by
\begin{align}
\Omega^{I}:=\{ (x, \rho)\in \F\times \A_{I} \ | \ x\in \Omega_{\rho}^{I}\}.
\end{align}
Since $\Omega^{I}$ is open, it acquires the structure of a complex analytic space.  Moreover, since $\F\times \A_{I}\rightarrow \A_{I}$ is smooth, and smoothness is a local property, the map
$\Omega^{I}\rightarrow \A_{I}$ is smooth.

The universal domain admits commuting actions of $G$ and $\Gamma$ given by
\begin{itemize}
\item $g\cdot (x, \rho)=(g\cdot x, g\cdot\rho\cdot g^{-1})$,
\item  $\gamma \cdot(x, \rho)=(\rho(\gamma)\cdot x, \rho)$.
\end{itemize}
The quotient of $\Omega^I$ by the free and proper $\Gamma$-action is denoted by $\W^{I}:= \Gamma \backslash \Omega^{I}$ and the map $\Omega^I \to \A_I$ given by $(x,\rho) \mapsto \rho$ descends to a natural $G$-equivariant projection $p: \W^{I}\rightarrow \A_{I}$.

\begin{thm}
The triple $(\W^{I}, \A_{I}, p)$ is a $G$-equivariant complex analytic family.
Moreover, the map $p: \W^{I}\rightarrow \A_{I}$ is locally trivial as a continuous map.  Finally, if $\rho, \rho^{\prime}\in \A_{I}$ lie in the same connected component, then $\W_{\rho}^{I}$ and $\W_{\rho^{\prime}}^{I}$ are diffeomorphic.
\end{thm}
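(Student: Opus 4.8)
The plan is to verify the three assertions in order, using the explicit description of $\W^I = \Gamma\backslash\Omega^I$ and the projection $p$, together with general facts about the actions involved.

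First I would establish that $(\W^I, \A_I, p)$ is a complex analytic family, i.e.\ that $p$ is proper and smooth. Smoothness of $\Omega^I \to \A_I$ was already observed above (it is the restriction to an open set of the trivial smooth map $\F\times\A_I \to \A_I$), and since the $\Gamma$-action on $\Omega^I$ is free, proper, and holomorphic with quotient $\W^I$, the induced map $p: \W^I \to \A_I$ is again a holomorphic submersion in the sense of \autoref{def: smooth}: local sections of $\Omega^I \to \A_I$ descend, and the local product structure $U \simeq V\times W$ descends on small enough opens where the $\Gamma$-action is trivialized. Properness of $p$ follows from \autoref{thm: KLP}: each fiber $\W_\rho^I = \rho(\Gamma)\backslash\Omega_\rho^I$ is compact, and properness is then a local statement over $\A_I$ which follows from local triviality (proved next), or directly from the fact that the limit set $\Lambda^I \subset \F\times\A_I$ is closed, so that $\Omega^I$ is the complement of a closed set mapping properly to $\A_I$, combined with cocompactness of the $\Gamma$-action on fibers. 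The $G$-equivariance is immediate from the fact that the two $G$- and $\Gamma$-actions on $\Omega^I$ commute, so the $G$-action descends to $\W^I$ and commutes with $p$ up to the conjugation action on $\A_I$.

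Next I would prove local triviality of $p$ as a continuous map. Fix $\rho_0 \in \A_I$. The key input is the continuity of the limit map $\rho \mapsto \xi_\rho$ (recorded in the properties of Anosov homomorphisms) and hence of $\rho \mapsto \Lambda_\rho^I$ in the Hausdorff topology on closed subsets of $\F$; this is where one shows that $\Omega^I \to \A_I$ is a topologically locally trivial fiber bundle with fiber $\Omega_{\rho_0}^I$ over a neighborhood of $\rho_0$. The standard way to produce the trivializing homeomorphisms is to build a continuous family of homeomorphisms $\Omega_{\rho_0}^I \to \Omega_\rho^I$ for $\rho$ near $\rho_0$; I expect this to be the main obstacle, and the cleanest route is to invoke the structural stability properties of Anosov representations (openness of $\A$ and continuous dependence of the associated cocompact domain actions, which underlie \autoref{thm: KLP} and are developed in \cite{KLP13}\cite{GW12}), or alternatively to use a partition-of-unity / flow argument along a vector field transverse to the fibers — but some care is needed because $\A_I$ is only a complex analytic space and $\rho_0$ need not be a smooth point, so one must work with a local embedding into affine space and a continuous (not necessarily smooth) trivialization. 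Once $\Omega^I \to \A_I$ is trivialized equivariantly for the $\Gamma$-action (which can be arranged since the $\Gamma$-actions on $\Omega_{\rho_0}^I$ and $\Omega_\rho^I$ are intertwined by the ambient $G$-conjugation conjugating $\rho_0$ to $\rho$ only up to the representation-varying cocycle — here one instead uses the fact that the trivialization can be chosen to conjugate the $\rho_0$-action to the $\rho$-action), passing to the $\Gamma$-quotient yields a local trivialization of $p: \W^I \to \A_I$.

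Finally, the diffeomorphism statement for $\rho,\rho' $ in the same connected component of $\A_I$: local triviality as a continuous map already gives that $\W_\rho^I$ and $\W_{\rho'}^I$ are homeomorphic when $\rho,\rho'$ are joined by a path, since $\A_I$ is locally path-connected (being an open subset of a complex analytic space) and a locally trivial bundle over a connected, locally path-connected base has all fibers homeomorphic via concatenation of local trivializations along a path. To upgrade to a diffeomorphism, I would note that the trivializing maps can be taken smooth: away from any issue with singularities of $\A_I$, restrict to a smooth path $t \mapsto \rho_t$ (or a chain of smooth arcs) in $\A_I$, on each of which the restricted family $p^{-1}(\text{arc}) \to \text{arc}$ is a smooth family over a $1$-manifold, hence diffeomorphic to a product by the Ehresmann fibration theorem (properness from \autoref{thm: KLP}, submersion from smoothness of $p$). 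Composing the resulting diffeomorphisms $\W_{\rho_0}^I \xrightarrow{\sim} \W_{\rho_t}^I$ along the chain gives the desired diffeomorphism $\W_\rho^I \simeq \W_{\rho'}^I$. The only subtlety is ensuring one can connect $\rho$ to $\rho'$ by a piecewise-smooth path inside $\A_I$; this follows because $\A_I$ is open in the analytic space $\Hom(\Gamma,G)^{\mathrm{an}}$, hence locally path-connected and in fact locally real-analytically path-connected, so connectedness implies piecewise-smooth-path-connectedness.
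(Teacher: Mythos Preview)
Your argument for the first assertion (that $(\W^I,\A_I,p)$ is a $G$-equivariant complex analytic family) is essentially the paper's: the paper just makes the appeal to the quotient theorem explicit by forward-referencing \autoref{thm: quotients}, whereas you argue the smoothness and properness more informally.

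For the remaining two assertions the approaches diverge. The paper does not argue either one: it cites the proof of \cite[Theorem~9.12]{GW12} (via \cite[Section~5.2]{DS20}) for topological local triviality of $p$, and \cite[Theorem~5.1]{DS20} for constancy of the diffeomorphism type on connected components. Your sketches are in effect outlines of what those references establish. The local-triviality sketch via Hausdorff-continuity of $\rho\mapsto\Lambda_\rho^I$ is along the right lines but, as you note, is where the work lies; the cited Guichard--Wienhard argument builds the equivariant trivialization carefully and is not a one-liner. Your Ehresmann-along-paths argument for the diffeomorphism statement is the standard idea, and the worry you flag about singular points of $\A_I$ can indeed be handled: because $p$ is smooth in the sense of \autoref{def: smooth}, pulling back along a real-analytic arc $[0,1]\to\A_I$ (existence of such arcs between points in the same component follows from triangulability/local structure of analytic sets) yields a genuine smooth proper submersion over $[0,1]$, to which Ehresmann applies. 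The paper sidesteps all of this by deferring to \cite{DS20}.
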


In the proof we will refer to a result about quotients from the next subsection (\autoref{thm: quotients}).  

\begin{proof}
Since $\W^{I}$ is the quotient of the analytic space $\Omega^{I}$ by a free and proper action of a countable discrete group of automorphisms, \autoref{thm: quotients} implies there is a unique complex analytic structure on $\W^{I}$ such that the quotient map $q: \Omega^{I}\rightarrow \W^{I}$ is a smooth holomorphic map.

Since $\Omega^{I}\rightarrow \A_{I}$ is smooth, this implies $p: \W^{I}\rightarrow \A_{I}$ is smooth and proper.  Therefore, $(\W^{I}, \A_{I}, p)$ is a complex analytic family.  Moreover, since the $G$ and $\Gamma$-actions on $\Omega^{I}$ commute, the $G$-action on $\A_{I}$ clearly lifts to $\W^{I}.$

As noted in \cite[Section 5.2]{DS20}, the topological local triviality of $p$ follows from the proof of \cite[Theorem 9.12]{GW12}.  In \cite[Theorem 5.1]{DS20} it is shown that the diffeomorphism type is constant on each component.
\end{proof}
\begin{remark}
In general, the space $\A$ of Anosov homomorphisms is known to have singular points and non-reduced points. In fact, by results of Kapovich-Millson \cite{KM17}, the space $\Hom(\Gamma, G)$ can be arbitrarily singular. These properties reinforce the necessity to work in the setting of general complex analytic spaces.
\end{remark}

Given $\rho \in \A_{I},$ consider the Kodaira-Spencer map of the Anosov family over $\A_{I}$.
By a theorem of Goldman \cite{GOL84} (see also \cite{SIK12}), there is a natural isomorphism $T_{\rho}\A \simeq Z^{1}(\Gamma, \g_{\rho})$ where the latter is the complex vector space of group $1$-cocycles where the $\Gamma$-action on $\g$ is given by $\gamma\cdot X=\textnormal{Ad}(\rho(\gamma))(X).$
Therefore, the Kodaira-Spencer map takes the form of a linear map
\begin{align}
\KS_{\rho}: Z^{1}(\Gamma, \g_{\rho})\rightarrow H^{1}(\W_{\rho}^{I}, \Theta_{\W_{\rho}^{I}}).
\end{align}

Let $\g\rightarrow Z^{1}(\Gamma, \g_{\rho})$ be the boundary map in group cohomology.
Since the family $p: \W^{I}\rightarrow \A_I$ is $G$-equivariant, the image of $\g$ is always contained in the kernel of $\KS_{\rho}$.
The kernel and image of $\KS_{\rho}$ are studied in more detail in \autoref{sec:ks}.
However, the following example illustrates that the surjectivity of $\KS_{\rho}$ is a non-trivial matter.

\begin{example}\label{ex:GHYS}
Let $\rho: \Gamma \rightarrow \PSL(2, \C)$ be the inclusion of a torsion free cocompact lattice and $\iota_{3}: \PSL(2, \C)\rightarrow \PSL(3, \C)$ be the unique (up to conjugacy) irreducible homomorphism.  The homomorphism $\iota_{3}\circ \rho$ is $B$-Anosov where $B<\PSL(3, \C)$ is a Borel subgroup (see \cite{DS20}). 

Let $\F$ be the variety of complete flags in $\C^{3}$ and $I$ the unique balanced ideal of type $(B, \F).$  There is an isomorphism $\rho(\Gamma)\backslash \PSL(2, \C) \simeq \W_{\iota_{3}\circ \rho}^{I}$ (see \cite{ST15}).  By a theorem of Porti \cite{POR13}, the boundary map $\sl(3, \C)\rightarrow Z^{1}(\Gamma, \sl(3, \C)_{\iota_{3}\circ \rho})$ is an isomorphism.  Since $\sl(3, \C)$ is in the kernel of the Kodaira-Spencer map
\begin{align}
\KS_{\rho}: Z^{1}(\Gamma, \g_{\iota_{3}\circ \rho})\rightarrow H^{1}(\W_{\iota_{3}\circ \rho}^{I}, \Theta_{\W_{\iota_{3}\circ \rho}^{I}}),
\end{align}
we conclude that $\KS_{\rho}$ is zero.  Meanwhile, if the first Betti number of $\Gamma \backslash \PSL(2, \C)$ is positive, then Ghys \cite{GHY95} proved that the vector space $H^{1}(\W_{\iota_{3}\circ \rho}^{I}, \Theta_{\W_{\iota_{3}\circ \rho}^{I}})$ is nonzero, and hence the corresponding Anosov family is \emph{not} complete at $\iota_{3}\circ \rho.$
\end{example}

\subsection{The character scheme}
\label{subsec:charscheme}

Let $\X(\Gamma, G)$ denote the geometric invariant theory quotient of the affine scheme $\Hom(\Gamma, G)$ by the conjugation action of $G$.
Since $G$ is a reductive affine algebraic group, $\X(\Gamma, G)$ is an affine scheme of finite type over $\C$ and comes equipped with a map 
\begin{equation}
\label{eqn:chi-def}
\chi : \Hom(\Gamma, G)\rightarrow \X(\Gamma, G)
\end{equation}
that is constant on $G$-orbits (see Sikora \cite{SIK12}).  Via the analytification functor, we will simultaneously view $\X(\Gamma, G)$ as a complex analytic space.
However, it is important to note that the map $\Hom(\Gamma,G) \to \X(\Gamma,G)$ is not a set-theoretic quotient map; in general, its fibers may contain multiple $G$-orbits.

We record here the following \emph{quotient} theorem, which will be used to descend the the complex analytic family $\W^{I}$ over a suitable subset of $\A_{I}$ to the quotient by the $G$-action.
\begin{thm}\label{thm: quotients}
Let $X$ be a complex analytic space and $G$ a complex Lie group acting properly, freely, and holomorphically on $X$.
\begin{enumerate}
\item The quotient $X/G$ is a complex analytic space and the projection
        $\pi: X\rightarrow X/G$ is a smooth holomorphic map.
\item Let $\Y\rightarrow X$ be a $G$-equivariant complex analytic family.  Then      there exists a unique
        complex analytic family $\underline{\Y}\rightarrow X/G$ such that
        $\pi^{\star}\underline{\Y}\simeq \Y.$  
\end{enumerate}
\end{thm}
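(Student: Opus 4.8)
The plan is to treat the two parts in sequence, with part (1) providing the geometric backbone for part (2). For part (1), I would proceed by constructing local slices for the $G$-action. Fix $x \in X$ with stabilizer trivial (the action is free). Since the action is proper, the orbit $G \cdot x$ is a closed complex submanifold of $X$ isomorphic to $G$ via $g \mapsto g \cdot x$; in particular $G \cdot x$ is smooth even if $X$ is singular at $x$, because the orbit map is an immersion from a manifold. The first key step is to produce, near $x$, a locally closed complex analytic subspace $S \subset X$ (a \emph{slice}) passing through $x$ and transverse to the orbit directions, such that the action map $G \times S \to X$ is a biholomorphism onto a $G$-saturated open neighborhood of the orbit. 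In the smooth category one uses an equivariant tubular neighborhood / exponential-type construction; in the analytic-space setting, one instead embeds a neighborhood of $x$ in some $\C^{k_U}$, chooses a complement to the tangent space of the orbit at $x$ within $\C^{k_U}$, intersects $X$ with an affine translate of that complement, and checks transversality after shrinking. Properness and freeness guarantee that, after shrinking, $G \times S \to X$ is injective with open image, hence an isomorphism onto its image by invariance of domain in the analytic category. Then $X/G$ is covered by charts isomorphic to these slices $S$, the transition maps between overlapping slices are holomorphic (they are obtained by composing with the holomorphic action and projecting), and one declares $\O_{X/G}$ to be the sheaf whose sections are $G$-invariant holomorphic functions on saturated opens; the projection $\pi$ restricted to $G \times S \to S$ is the obvious projection, which is smooth in the sense of \autoref{def: smooth}. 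That $X/G$ is Hausdorff is exactly properness of the action.

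For part (2), given a $G$-equivariant family $\Y \to X$, I would descend it slice-by-slice. Over each saturated chart $G \times S \cong U \subset X$, the $G$-equivariant structure identifies $\Y|_U$ with $G \times (\Y|_S)$ equivariantly, where $G$ acts only on the first factor; concretely, the map $(g,y) \mapsto g^{-1}\cdot y$ trivializes $\Y|_U$ over the orbit directions and exhibits $\Y|_S \to S$ as ``$\Y|_U$ modulo $G$''. Setting $\underline{\Y}|_S := \Y|_S$ and checking that these patch compatibly over slice overlaps (using the cocycle from the $G$-equivariant structure of $\Y$) produces a complex analytic space $\underline{\Y}$ with a proper holomorphic map to $X/G$; smoothness of $\underline{\Y} \to X/G$ follows because it holds over each chart $S$, smoothness being local on the base. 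The isomorphism $\pi^{\star}\underline{\Y} \simeq \Y$ is then the tautological identification $G \times (\Y|_S) \cong \Y|_U$ over each chart, and these glue. Uniqueness follows because $\pi$ is a smooth surjection (indeed, étale-locally on $X/G$, it has local sections given by the slices $S \hookrightarrow X$), so a family over $X/G$ is determined by its pullback to $X$ together with the descent data, which here is forced.

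The main obstacle is the construction of the analytic slice $S$ in part (1): when $X$ has singularities at $x$, one cannot invoke a smooth implicit function theorem to split off the orbit directions, so some care is needed to produce a locally closed analytic subspace $S$ with $\dim_x S = \dim_x X - \dim G$ (suitably interpreted for singular $X$) for which $G \times S \to X$ is a \emph{local isomorphism of analytic spaces} at $(\mathrm{Id},x)$, not merely a bijection on points. The right way to handle this is to note that freeness of the action makes the orbit map $G \to X$, $g \mapsto g\cdot x$, a closed embedding whose image is a submanifold, and then to apply an equivariant version of the local retraction / Nakajima-type slice theorem for holomorphic actions of Lie groups on complex spaces — this is standard but is the one place where ``analytic space'' rather than ``manifold'' costs real work, so I would isolate it as a lemma and cite the appropriate source (e.g.\ the theory in Fischer \cite{FIS76} or the equivariant analytic slice results in the literature on holomorphic transformation groups) rather than reprove it. Everything downstream — gluing, smoothness, the descent in part (2), and uniqueness — is then formal.
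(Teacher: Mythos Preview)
Your proposal is correct and follows essentially the same route as the paper: local product charts (your ``slices'') for part~(1), and local holomorphic sections with a $G$-valued cocycle to glue $\underline{\Y}$ for part~(2). The only notable difference is that for part~(1) the paper simply cites Kaup \cite{KAU68} for the existence of the complex analytic structure on $X/G$ and then reads off the local product form $U \simeq V \times Q$ (with $Q \subset G$ an open neighborhood of the identity) from that construction, rather than sketching the slice argument directly as you do; since you also propose to ultimately cite the literature for the hard analytic-space slice lemma, the two treatments converge.
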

\begin{proof}
It was proved by Kaup \cite{KAU68} that the quotient $X/G$ is naturally a complex analytic space.  Let $x\in X$
and $\mathcal{O}_{x}$ the $G$-orbit of $x$.  Since the $G$-action is free and proper, the orbit map is a closed embedding yielding an isomorphism $G\simeq \mathcal{O}_{x}$. Moreover, the construction of the analytic structure on $X/G$ implies there exists an open neighborhood $U$ of $x,$ an open set $V$ of $\pi(x)$ such that $\pi(U)= V$, and a neighborhood $Q$ of the identity in $G$ and an isomorphism $V\times Q\simeq U$ commuting with the natural projections.  Hence, $\pi: X\rightarrow X/G$ is smooth.

Now, choose a sufficiently fine open cover $\{V_{i}\}$ of $X/G$ such that there exists a collection of open sets $\{U_{i}\}$ in $X$ such that $\pi(U_{i})=V_{i}$ and $U_{i}\simeq V_{i}\times Q_{i}$ as above.  Then, over each $V_{i}$ there exists a holomorphic section $\sigma_{i}: V_{i}\rightarrow X$ of $\pi.$  
Over $V_{i},$ define the complex analytic family $\uY_{i}:=\sigma_{i}^{\star}\Y.$

Now suppose $V_{ij}:=V_{i}\cap V_{j}\neq \emptyset.$  Since $G$ acts freely, there exists a unique holomorphic map $g_{ij}: V_{ij}\rightarrow G$ such that
$\sigma_{i}=g_{ij}\cdot \sigma_{j}.$  Furthermore, on triple intersections 
$g_{ik}\cdot \sigma_{k}=\sigma_{i}=g_{ij}\cdot \sigma_{j}=g_{ij}g_{jk}\cdot \sigma_{k}.$ Since $G$ acts freely, this implies $g_{ik}=g_{ij}g_{jk}.$  

Since $\Y$ is $G$-equivariant, the cocycle $\{g_{ij}\}$ defines isomorphisms
$\phi_{ij}: \uY_{i}\xrightarrow{\simeq} \uY_{j}.$  Moreover, the collection $\{\phi_{ij}\}$ also satisfy the cocycle condition.  Then, we define $\uY=\sqcup_{i} \uY_{i}/\sim$ where the equivalence relation $\sim$ is given by the cocycle $\{\phi_{ij}\}.$  Since the $G$-action is proper, it follows that the space $\uY$ is Hausdorff.  Therefore, by \cite[p.~20]{FIS76}, $\uY$ has a canonical complex analytic space structure.  Since smoothness is preserved by base change and $\uY$ is locally defined via $\sigma_{i}^{\star}\Y,$ the projection $\uY\rightarrow X/G$ is smooth and proper, therefore $\uY\rightarrow X/G.$ is a complex analytic family. The proof that $\pi^{\star}\uY\simeq \Y$ follows in the same fashion, which completes the proof.
\end{proof}

There are some mild additional hypotheses on homomorphisms that determine a large subset of $\Hom(\Gamma, G)$ where the hypotheses of \autoref{thm: quotients} are satisfied.
Specifically, following \cite{JM87}\cite{SIK12}, we say a homomorphism $\rho \in \Hom(\Gamma,G)$ is \emph{good} if it satisfies the conditions:
\begin{itemize}
\item It is irreducible, i.e.~$\rho(\Gamma)$ is not contained in a proper parabolic subgroup of $G$, and
\item The stabilizer $Z(\rho) = Z_G(\rho(\Gamma))$ of $\rho$ under the action of $G$ on $\Hom(\Gamma,G)$ is trivial.
\end{itemize}
The good homomorphisms $\Hom^{\vee}(\Gamma, G)$ form an open dense subset of the complex analytic space $\Hom(\Gamma,G)$ on which the action of $G$ is free and proper.  By \autoref{thm: quotients}, the quotient
$\X^{\vee}(\Gamma, G):=\Hom^{\vee}(\Gamma, G)/G$ is a complex analytic space.

Sometimes we will want to further restrict the homomorphisms we consider so that we work entirely in the category of complex manifolds.
We will say that $\rho\in \Hom(\Gamma, G)$ is \emph{very good} if it is good and $\rho$ is a smooth point of $\Hom(\Gamma,G)$.
The set of very good points $\Hom^{\star}(\Gamma,G)\subset\Hom(\Gamma,G)$ is a complex manifold on which $G$ acts freely and properly.
We denote the quotient complex manifold by $\X^{\star}(\Gamma, G)$.

\begin{prop}[{\cite[Thm.~53]{SIK12}}]
\label{prop:tangent}
For all $[\rho]\in \X^{\vee}(\Gamma, G)$ and any $\rho\in \Hom^{\vee}(\Gamma, G)$ projecting to $[\rho]$ (that is, such that $\chi(\rho) = [\rho]$, where $\chi$ is the map from \eqref{eqn:chi-def}), there is a natural isomorphism 
$T_{[\rho]}\X^{\vee}(\Gamma, G)\simeq H^1(\Gamma,\g_\rho)$.

Furthermore, the natural map $Z^1(\Gamma,\g_\rho) \to H^1(\Gamma,\g_\rho)$ represents the differential of $\chi$ in the sense that there is a commutative diagram:
\begin{center}
\begin{tikzcd}
T_\rho \Hom^{\vee}(\Gamma,G) \ar[r,"\sim"] \ar[d,"d_\rho \chi"] &  Z^1(\Gamma,\g_\rho) \ar[d] \\
T_{[\rho]} \X^{\vee}(\Gamma,G) \ar[r,"\sim"] &  H^1(\Gamma,\g_\rho)
\end{tikzcd}
\end{center}
\end{prop}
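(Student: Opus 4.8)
The plan is to reduce the statement to two standard inputs --- the Weil--Goldman description of the tangent space of $\Hom(\Gamma,G)$ by group cocycles, and the local product structure of the quotient map $\chi$ over the good locus provided by \autoref{thm: quotients} --- after which the isomorphism and the commutative square follow from a short diagram chase.

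First I would recall the identification $T_\rho\Hom(\Gamma,G)\simeq Z^1(\Gamma,\g_\rho)$. Choosing a finite presentation $\Gamma=\langle\gamma_1,\dots,\gamma_m\mid r_1,\dots,r_n\rangle$ realises $\Hom(\Gamma,G)$ as the closed subscheme of $G^m$ cut out by the relators, and differentiating the relator map at $\rho$ identifies the Zariski tangent space with the space of group $1$-cocycles for the module $\g_\rho$, where $\gamma$ acts by $\Ad(\rho(\gamma))$; this is due to Weil and Goldman \cite{GOL84}, recorded in the present algebraic language in \cite{SIK12}. Since analytification changes neither the completed local ring nor therefore the Zariski tangent space, the same holds for the analytic space $\Hom(\Gamma,G)$, and a fortiori for the open subset $\Hom^{\vee}(\Gamma,G)$. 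I would also note that this identification is $G$-equivariant: the conjugation map $\rho\mapsto g\rho g^{-1}$ has derivative at $\rho$ the isomorphism $Z^1(\Gamma,\g_\rho)\to Z^1(\Gamma,\g_{g\rho g^{-1}})$ induced by $\Ad(g)$.

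Next, on $\Hom^{\vee}(\Gamma,G)$ the group $G$ acts freely and properly, so \autoref{thm: quotients} gives that $\chi\colon\Hom^{\vee}(\Gamma,G)\to\X^{\vee}(\Gamma,G)$ is smooth in the sense of \autoref{def: smooth}, and the restriction of $\chi$ to the good locus is a geometric quotient, so its fibres are single $G$-orbits. Thus there is a neighbourhood $U$ of $\rho$ with a product chart $U\simeq V\times Q$, where $V\subset\X^{\vee}(\Gamma,G)$ is open and $Q$ is a neighbourhood of the identity in $G$, under which $\chi$ is the first projection. Passing to Zariski tangent spaces, $d_\rho\chi\colon Z^1(\Gamma,\g_\rho)\to T_{[\rho]}\X^{\vee}(\Gamma,G)$ is surjective with kernel the tangent space at $\rho$ of the fibre $G\cdot\rho$. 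Because $Z_G(\rho(\Gamma))$ is trivial, the orbit map $G\to\Hom(\Gamma,G)$, $g\mapsto g\rho g^{-1}$, is an injective immersion whose derivative at the identity is $X\mapsto\bigl(\gamma\mapsto X-\Ad(\rho(\gamma))X\bigr)$, with image $B^1(\Gamma,\g_\rho)$ by definition. Hence $\ker d_\rho\chi=B^1(\Gamma,\g_\rho)$, and $d_\rho\chi$ descends to an isomorphism
\[
H^1(\Gamma,\g_\rho)=Z^1(\Gamma,\g_\rho)/B^1(\Gamma,\g_\rho)\ \xrightarrow{\ \sim\ }\ T_{[\rho]}\X^{\vee}(\Gamma,G).
\]
This is the asserted natural isomorphism, and the commutative square expresses precisely that, under it, $d_\rho\chi$ is the canonical quotient map $Z^1(\Gamma,\g_\rho)\to H^1(\Gamma,\g_\rho)$.

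Finally I would check independence of the chosen lift $\rho$ of $[\rho]$: for $\rho'=g\rho g^{-1}$ the identity $\chi\circ(g\cdot)=\chi$ gives $d_{\rho'}\chi\circ d_\rho(g\cdot)=d_\rho\chi$, and combining this with the $\Ad(g)$-equivariance from the second step shows the two resulting isomorphisms onto $T_{[\rho]}\X^{\vee}(\Gamma,G)$ differ exactly by $\Ad(g)_{*}\colon H^1(\Gamma,\g_\rho)\to H^1(\Gamma,\g_{\rho'})$. I expect the main obstacle to be the bookkeeping behind the first two steps: making the cocycle identification of $T_\rho\Hom(\Gamma,G)$ functorial at the scheme level, confirming it survives analytification, and matching it with the product chart produced by \autoref{thm: quotients} so that all the identifications are simultaneously $G$-equivariant; once this is in place the conclusion is formal --- and indeed the entire statement is contained in \cite[Thm.~53]{SIK12}, which one may alternatively just invoke.
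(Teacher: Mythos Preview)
The paper does not supply its own proof of this proposition: it is stated with attribution to \cite[Thm.~53]{SIK12} and left without argument. Your proposal therefore goes beyond what the paper does, and the outline you give is correct and standard---indeed it is essentially the argument behind the cited theorem of Sikora, which you yourself note could simply be invoked.
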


The following theorem of Guichard-Gu\'{e}ritaud-Kassel-Wienhard \cite[Proposition~1.8]{GGKW15} shows that the locus of Anosov representations is well defined in $\X(\Gamma, G).$  
\begin{thm}\label{Anosov GIT}
Let $\rho \in \Hom(\Gamma, G)$ and $\eta \in \overline{G\cdot \rho}$ be in the orbit closure of $\rho.$  Then $\rho\in \A$ if and only if $\eta\in \A.$  

Hence, there is a subset $\uA\subset \X(\Gamma, G)$ such that $\chi^{-1}(\uA)=\A.$  
\end{thm}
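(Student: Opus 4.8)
The statement comprises two parts: the biconditional characterization $\rho \in \A \iff \eta \in \A$ for $\eta \in \overline{G\cdot\rho}$, and the existence of $\uA$. I would treat these separately, handling the second by a purely formal argument once the first is in hand.

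For the biconditional, one implication is elementary. Suppose $\eta \in \A$ and $\eta$ lies in the closure of the orbit $G\cdot\rho$. Since $G\cdot\rho$ is constructible, its Zariski closure coincides with its closure in the analytic topology, so there is a sequence $(g_n)$ in $G$ with $g_n\rho g_n^{-1}\to\eta$. Because $\A$ is open in $\Hom(\Gamma, G)$ we have $g_n\rho g_n^{-1}\in\A$ for $n$ large, and since $\A$ is $G$-invariant this forces $\rho\in\A$. The reverse implication---that $\rho\in\A$ forces $\eta\in\A$ for every $\eta\in\overline{G\cdot\rho}$---is the cited result \cite[Proposition~1.8]{GGKW15}, and this is where the real work lies. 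The subtlety is that conjugating $\rho$ by $g_n$ preserves the multiplicative constant $\kappa_1$ in the Anosov inequality but degrades the additive constant $\kappa_2$ by an amount controlled by the Cartan projection of $g_n$; when $g_n$ escapes to infinity in $G$ this degradation is unbounded, so one cannot simply pass to the limit in the defining inequality. GGKW circumvent this by using characterizations of the Anosov condition that are insensitive to such degenerations (via the associated equivariant boundary maps and the compactness of $G/P_A$); I would invoke their theorem rather than reprove it.

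Granting the biconditional, I would establish the existence of $\uA$ as follows. Set $\uA := \chi(\A)\subset\X(\Gamma, G)$. The inclusion $\A\subseteq\chi^{-1}(\uA)$ is immediate. For the reverse inclusion, let $\rho'\in\chi^{-1}(\uA)$, so that $\chi(\rho')=\chi(\rho)$ for some $\rho\in\A$. Since $\X(\Gamma, G)$ is the GIT quotient of the affine scheme $\Hom(\Gamma, G)$ by the reductive group $G$, the fiber $\chi^{-1}(\chi(\rho))$ contains a unique closed $G$-orbit, and this orbit is contained in the closure of every $G$-orbit in the fiber; in particular it meets both $\overline{G\cdot\rho}$ and $\overline{G\cdot\rho'}$. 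Choosing any $\eta$ in this closed orbit, we have $\eta\in\overline{G\cdot\rho}$ with $\rho\in\A$, hence $\eta\in\A$ by the biconditional; and then $\eta\in\overline{G\cdot\rho'}$ with $\eta\in\A$, hence $\rho'\in\A$, again by the biconditional. Thus $\chi^{-1}(\uA)=\A$, as required.

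The main obstacle is entirely contained in the forward direction of the biconditional, i.e.\ in \cite[Proposition~1.8]{GGKW15}: the robustness of the Anosov condition under passage to orbit closures. Once this is granted, the remaining content of the theorem is formal---the easy half of the biconditional uses only that $\A$ is open and $G$-invariant, and the existence of $\uA$ uses only the structure of fibers of an affine GIT quotient.
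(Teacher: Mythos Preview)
Your proposal is correct. The paper itself does not supply a proof of this theorem at all: it simply attributes the entire statement to \cite[Proposition~1.8]{GGKW15} and moves on. Your treatment is therefore strictly more detailed than the paper's---you isolate the easy direction of the biconditional (using only openness and $G$-invariance of $\A$), correctly identify the hard direction as the content of the cited GGKW result, and supply the standard GIT argument (unique closed orbit in each fiber of $\chi$, lying in every orbit closure in that fiber) to deduce the existence of $\uA$. This last step is a genuine addition over what the paper writes, and your argument for it is sound.
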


Now fix a balanced ideal $I$ of type $(P_{A}, \F)$.  Recall that $\A_{I} \subset \A$ denotes the subset of $\A$ where the domain of discontinuity corresponding to ideal $I$ is nonempty, and that we have the complex analytic family $\W^{I} \to \A_I$.
In general, we do not expect this analytic family over $\A_{I}$ to descend to a family over the quotient $\uA_{I} = \chi(\A_I) \subset \uA$, but as the statement of \autoref{thm: quotients} would suggest, this can be remedied by passing to the good (or very good) subset.

Specifically, let $\A_{I}^{\vee} := \A_I \cap \Hom^{\vee}(\Gamma, G)$ and $\A_I^\star := \A_I \cap \Hom^{\star}(\Gamma, G)$ denote the sets of good and very good representations in $\A_I$, so that $\A_I \supset \A_I^\vee \supset \A_I^\star$.
Let $\uA^{\vee}_I \subset \X^{\vee}(\Gamma, G)$ (a complex analytic space) and $\uA^{\star}_I$ (a complex manifold) denote the corresponding quotients.
Since $G$ acts properly and freely on $\A^{\vee}$, the following result is immediate from \autoref{thm: quotients}:
\begin{prop}
There is a unique complex analytic family $(\underline{\W}^{I}, \uA_{I}^{\vee}, \underline{p})$ such that $\pi^{\star}\underline{\W}^{I}\simeq \W^{I}$.
\end{prop}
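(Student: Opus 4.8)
The plan is to invoke \autoref{thm: quotients} directly, since the statement is essentially a specialization of that theorem to a particular group action and equivariant family. First I would observe that the two hypotheses of \autoref{thm: quotients} are met with $X = \A_I^\vee$ and the group $G$ acting by conjugation. By definition, $\A_I^\vee = \A_I \cap \Hom^\vee(\Gamma, G)$ is an open subset of $\Hom^\vee(\Gamma,G)$, hence a complex analytic space in its own right; and we recorded just above that $G$ acts freely and properly on the good locus $\Hom^\vee(\Gamma,G)$. Restriction to the open $G$-invariant subset $\A_I^\vee$ preserves freeness and properness of the action, and the action is holomorphic since conjugation is. So part (1) of \autoref{thm: quotients} already tells us $\uA_I^\vee = \A_I^\vee / G$ is a complex analytic space with smooth quotient map $\pi \colon \A_I^\vee \to \uA_I^\vee$.

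Next I would supply the equivariant family. We have the $G$-equivariant complex analytic family $(\W^I, \A_I, p)$ constructed earlier; restricting the base to the open $G$-invariant subset $\A_I^\vee \subset \A_I$ and pulling back $\W^I$ along the open inclusion yields a $G$-equivariant complex analytic family over $\A_I^\vee$ (smoothness and properness are preserved under this base change, and the $G$-equivariance is inherited). Call this family $\W^I|_{\A_I^\vee}$ — by abuse of notation I will simply continue to write $\W^I$. Then part (2) of \autoref{thm: quotients} applies verbatim: there exists a unique complex analytic family $(\underline{\W}^I, \uA_I^\vee, \underline{p})$ with $\pi^\star \underline{\W}^I \simeq \W^I$.

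For the writeup I would keep this very short, essentially a one-paragraph proof: note that $\A_I^\vee$ is open and $G$-invariant in $\Hom^\vee(\Gamma, G)$ where $G$ acts freely and properly, so the same holds on $\A_I^\vee$; then apply \autoref{thm: quotients}(1) to get the quotient space $\uA_I^\vee$ and \autoref{thm: quotients}(2) to the restriction of the equivariant family $\W^I$ to descend it. The only point that deserves a sentence of care is the implicit base change: one should say explicitly that restricting $\W^I \to \A_I$ over the open subset $\A_I^\vee$ gives a $G$-equivariant family over $\A_I^\vee$, since \autoref{thm: quotients}(2) is phrased for an equivariant family over the space $X$ on which $G$ acts. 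There is no real obstacle here — the statement is deliberately a clean corollary of \autoref{thm: quotients} — so the "hard part" is purely expository: making sure the reader sees that the hypotheses (properness and freeness on the good locus, $G$-equivariance of $\W^I$) have all been established in the preceding paragraphs, and that the restriction to $\A_I^\vee$ costs nothing.

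\begin{proof}
The set $\A_I^\vee = \A_I \cap \Hom^\vee(\Gamma,G)$ is an open $G$-invariant subset of $\Hom^\vee(\Gamma,G)$, on which $G$ acts freely, properly, and holomorphically (as recorded above for $\Hom^\vee(\Gamma,G)$). Restricting the $G$-equivariant complex analytic family $(\W^I, \A_I, p)$ over the open subset $\A_I^\vee \subset \A_I$ yields a $G$-equivariant complex analytic family over $\A_I^\vee$. Now \autoref{thm: quotients}(1) gives the complex analytic space $\uA_I^\vee = \A_I^\vee/G$ together with the smooth quotient map $\pi$, and \autoref{thm: quotients}(2) produces a unique complex analytic family $(\underline{\W}^I, \uA_I^\vee, \underline{p})$ with $\pi^\star \underline{\W}^I \simeq \W^I$.
\end{proof}
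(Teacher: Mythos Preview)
Your proposal is correct and matches the paper's approach exactly: the paper simply states that the result is immediate from \autoref{thm: quotients} since $G$ acts properly and freely on $\A^{\vee}$, and your proof spells out precisely this application. The extra sentence you include about restricting the family to the open $G$-invariant subset $\A_I^\vee$ is a reasonable expository addition but not something the paper bothers to make explicit.
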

Of course, when we want to work in the smooth setting we can further restrict the family given by this proposition to the very good locus, thus obtaining a complex analytic family over $\uA_I^\star$.  Generalizing our previous terminology, we refer to any of these families (i.e.~$\mathcal{W}^I$ or $\underline{\W}^I$ over any base space considered above) as an Anosov family.  The relations between the different homomorphism, character, and Anosov representation spaces discussed in this section are summarized in \autoref{fig:inclusions}.

\section{The Kodaira-Spencer map of the Anosov family}
\label{sec:ks}

\begin{figure}
  \[
    \begin{tikzcd}[remember picture]
    |[alias=col1top]|\Hom^\star(\Gamma,G) \ar[d,LA=1.25pt] \ar[r,symbol=\subset] &  \Hom^\vee(\Gamma,G) \ar[d,LA=1.25pt] \ar[r,symbol=\subset] & |[alias=septop]|\Hom(\Gamma,G) \ar[d,LA=0.5pt]   & |[alias=A11]|\A_I^\star \ar[d,LA=1.25pt] \ar[r,symbol=\subset] & |[alias=A12]|\A_I^\vee \ar[d,LA=1.25pt] \ar[r,symbol=\subset] & |[alias=A13]|\A_I  \ar[d,LA=0.5pt] \\
    |[alias=col1bot]|\X^\star(\Gamma,G) \ar[r,symbol=\subset] &  \X^\vee(\Gamma,G) \ar[r,symbol=\subset] & |[alias=sepbot]|\X(\Gamma,G)    & |[alias=A21]|\uA_I^\star \ar[r,symbol=\subset] & |[alias=A22]|\uA_I^\vee \ar[r,symbol=\subset] & \uA_I \\[-6mm]
    \text{\footnotesize \textbf{Very Good}} & \text{\footnotesize \textbf{Good}}\\[-9mm]
    \text{\footnotesize (manifolds)} & \text{\footnotesize (analytic spaces)} & \text{\footnotesize (affine schemes)}
    \end{tikzcd}
  \begin{tikzpicture}[overlay,remember picture]
  \draw[blue,dashed] ([yshift=2mm,xshift=16mm] septop.north) to ([yshift=-2mm,xshift=16mm] sepbot.south);
  \draw[red, rounded corners=5mm] ([yshift=2mm,xshift=-2mm]A11.north west) -- ([yshift=2mm,xshift=2mm] A13.north east) -- ([yshift=-2mm,xshift=2mm] A13.south east) -- ([xshift=2mm,
  yshift=-1.9mm]A12.south east) -- ([xshift=2mm,yshift=-2mm]A22.south east) -- ([xshift=-2mm,yshift=-2mm] A21.south west) -- cycle;
  \path (A11.north) -- node[above,yshift=2mm,red] {\footnotesize Can be base of an Anosov family} (A13.north);
  \end{tikzpicture}
  \]
  \vspace{-5mm}
  \caption{Relations between the spaces of homomorphisms and characters introduced in \autoref{subsec:charscheme} (left) and the corresponding sets of Anosov representations and characters (right).  Bold arrows are set-theoretic quotients by $G$ (each fiber is a single $G$-orbit), while regular arrows are GIT quotients (constant on orbits, but fibers can contain multiple $G$-orbits).}\label{fig:inclusions}
  \end{figure}
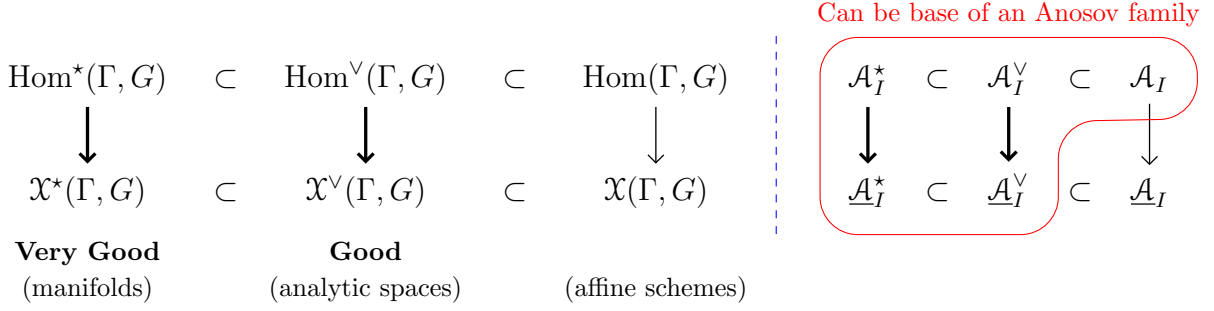

In this section, we develop the crucial technical tool which will allow us to compute the Kodaira-Spencer map of the Anosov family and prove \autoref{thm:main-infinitesimal}.

\begin{thm}
For any thickened Anosov homomorphism $(\rho, I)$ of type $(P_{A}, \F)$ such that $\rho\in \A_{I},$  there is a commutative diagram
\label{thm:pent}
\begin{equation}
\begin{tikzcd}
&  Z^1(\Gamma,\g_\rho) \ar[rd,"\KS_\rho"] \ar[d] \\
0 \ar[r] &  H^1(\Gamma,H^0(\Omega_{\rho}^{I}, \Theta_{\Omega_\rho^{I}})) \ar[r] & H^1(\W_{\rho}^{I},\Theta_{\W_\rho^{I}}) \ar[r] & H^0(\Gamma, H^1(\Omega_{\rho}^{I}, \Theta_{\Omega_\rho}^{I}))
\end{tikzcd}
\end{equation}
in which the bottom row is an exact sequence, and where the vertical arrow is the map on $1$-cocycles induced by the infinitesimal action of $G$ on $\F$ and restriction of vector fields from $\F$ to the open subset $\Omega_\rho^I$.
\end{thm}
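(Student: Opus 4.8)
The plan is to construct the diagram from the Cartan–Leray (Grothendieck) spectral sequence for the action of the discrete group $\Gamma$ on the covering $q\colon \Omega_\rho^I \to \W_\rho^I$, applied to the coherent sheaf $\Theta_{\Omega_\rho^I}$, and then to identify the diagonal composite with the Kodaira–Spencer map of the Anosov family using the spectral-sequence description of the family's relative tangent sheaf.

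First I would set up the spectral sequence. Since $\Gamma$ acts freely and properly discontinuously (by \autoref{thm: KLP}), the quotient map $q$ is a covering, $\Theta_{\W_\rho^I}$ is the $\Gamma$-descent of $\Theta_{\Omega_\rho^I}$, and there is a convergent spectral sequence
\begin{equation*}
E_2^{p,q_0} = H^p\bigl(\Gamma, H^{q_0}(\Omega_\rho^I, \Theta_{\Omega_\rho^I})\bigr) \;\Longrightarrow\; H^{p+q_0}(\W_\rho^I, \Theta_{\W_\rho^I}).
\end{equation*}
The five-term exact sequence in low degrees of any first-quadrant spectral sequence gives precisely
\begin{equation*}
0 \to E_2^{1,0} \to H^1 \to E_2^{0,1} \xrightarrow{d_2} E_2^{2,0},
\end{equation*}
and truncating off the last map yields the bottom row of the asserted diagram. (One should note that $H^1(\Omega_\rho^I,\Theta_{\Omega_\rho^I})$ is a priori only a topological vector space, not finite-dimensional, but the group cohomology $H^0(\Gamma,-)$ of it — the invariants — still makes sense and the exactness of the five-term sequence is purely homological, so no finiteness is needed here.)

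Next I would produce the vertical arrow and establish commutativity. The infinitesimal $G$-action on $\F$ gives a Lie algebra homomorphism $\g \to H^0(\F,\Theta_\F)$, and since $G = \Aut(\F)$ the standing assumption gives that this is an isomorphism; restricting global vector fields from $\F$ to the open subset $\Omega_\rho^I$ gives a $\Gamma$-equivariant map $\g_\rho \to H^0(\Omega_\rho^I, \Theta_{\Omega_\rho^I})$, which on $1$-cocycles induces $Z^1(\Gamma,\g_\rho) \to Z^1(\Gamma, H^0(\Omega_\rho^I,\Theta_{\Omega_\rho^I})) \to H^1(\Gamma, H^0(\Omega_\rho^I,\Theta_{\Omega_\rho^I})) = E_2^{1,0}$. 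This is the vertical arrow. Commutativity of the triangle — that the composite $Z^1(\Gamma,\g_\rho) \to E_2^{1,0} \to H^1(\W_\rho^I,\Theta_{\W_\rho^I})$ equals $\KS_\rho$ — is the real content. The strategy is to compute $\KS_\rho$ via Čech–group double complexes: choose a section of $\Omega^I \to \A_I$ locally near $\rho$ giving a cocycle description of the deformation of $\W_\rho^I$ induced by a tangent vector in $Z^1(\Gamma,\g_\rho)$, and observe that an element of $Z^1(\Gamma,\g_\rho)$ — thought of as a first-order deformation of the homomorphism $\rho$ — changes only the gluing of the covering $\Omega_\rho^I \to \W_\rho^I$, i.e.\ only the $\Gamma$-action, by a cocycle valued in vector fields on $\Omega_\rho^I$ that are the restrictions of the infinitesimal $G$-action; this is exactly an element of $Z^1(\Gamma, H^0(\Omega_\rho^I,\Theta_{\Omega_\rho^I}))$, and its class in $H^1(\W_\rho^I,\Theta_{\W_\rho^I})$ via the edge homomorphism is the Kodaira–Spencer class by the very definition of the latter as the obstruction to trivializing the family to first order.

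The main obstacle, I expect, will be the careful identification in that last step: making precise that the first-order variation of the complex manifold $\W_\rho^I = \Gamma \backslash \Omega_\rho^I$ as $\rho$ varies is entirely captured by the variation of the $\Gamma$-action (the domain $\Omega_\rho^I$ itself moves inside $\F$, but $\F$ is rigid and $\Omega_\rho^I$ is cut out by the moving limit set, so one must argue that after an equivariant identification the complex structure on the model manifold is fixed and only the deck transformations move — this is essentially the topological local triviality of $p$ recorded earlier, upgraded to hold holomorphically to first order). One clean way to organize this is to work with the universal domain $\Omega^I \subset \F \times \A_I$ directly: the relative tangent sheaf $\Theta_{\W^I/\A_I}$ restricted to the fiber is $\Theta_{\W_\rho^I}$, the sheaf $p^\star \Theta_{\A_I}$ restricted to the fiber is the constant sheaf $Z^1(\Gamma,\g_\rho) \otimes \O_{\W_\rho^I}$, and chasing the connecting map of \eqref{eqn:ks-seq} through the quotient $q\colon \Omega_\rho^I \to \W_\rho^I$ and the spectral sequence gives the diagram; the compatibility of the $\Gamma$-equivariant short exact sequence on $\Omega_\rho^I$ (whose middle term's global sections receive $\g_\rho$ via the $G$-action) with the sheaf sequence on $\W_\rho^I$ is then a diagram chase rather than a geometric argument.
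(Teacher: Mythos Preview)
Your proposal is correct and follows essentially the same approach as the paper: the bottom row is the low-degree exact sequence of the Grothendieck spectral sequence for the covering $\Omega_\rho^I \to \W_\rho^I$, and commutativity is established by working with the universal domain $\Omega^I \subset \F \times \A_I$ and exploiting that only the $\Gamma$-action varies to first order. The paper makes your final ``diagram chase'' precise by building an explicit map of double complexes---from a two-column complex encoding the Kodaira--Spencer connecting homomorphism to the Grothendieck double complex $C^p(\Gamma, A^{0,q}(\Omega_\rho^I,\Theta_{\Omega_\rho^I}))$---using the holomorphic splitting $\iota_\rho^\star \Theta_{\Omega^I} \simeq \mathcal{H} \oplus \mathcal{V}$ that comes from $\Omega^I$ being open in the product $\F \times \A_I$; this is exactly the mechanism you anticipate in your last paragraph.
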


In preparation for the proof we recall some constructions in homological algebra that we will need.

For a first-quadrant double complex $C = C^{\param,\param}$ with differentials of degrees $(1,0)$ and $(0,1)$, we denote by $\lv E_r^{p,q}(C)$ the associated vertical spectral sequence with differentials of degree $(r,1-r)$ and $\lv E_0^{p,q} = C^{p,q}$.
Similarly $\lh E_r^{p,q}(C)$ denotes the horizontal spectral sequence with differentials of degree $(1-r,r)$ and $\lh E_0^{p,q} = C^{p,q}$.

For a vertical spectral sequence $\lv E^{p,q}$ converging to $H^\param$ there is an exact sequence of low-degree terms
$$ 0 \to \lv E^{1,0}_2 \to H^1 \to \lv E^{0,1}_2 $$
which is functorial with respect to morphisms of spectral sequences.
For brevity we will refer to this as the LDT sequence.

Given a short exact sequence of sheaves,
$0 \to A \to B \to C \to 0$,
the connecting homomorphism $\delta^0 : H^0(C) \to H^1(A)$ from the associated long exact sequence of cohomology can be interpreted as the map $\lv E^{1,0}_2 \to H^1$ from the LDT sequence.
Specifically, suppose we have a vertical spectral sequence with $\lv E^1$ page
\begin{equation}
\lv E_1^{p,q} = \begin{cases}
H^q(B) & p=0\\
H^q(C) & p=1\\
0 & \text{otherwise}
\end{cases}
\label{eq:last-two-thirds}
\end{equation}
and differential induced by the map $B \to C$.
The sequence therefore converges to $H^\param(A)$ and has $\lv E_2^{1,0} = \coker \left (H^0(B) \to H^0(C) \right )$.
Furthermore, the initial terms of its LDT sequence give the injective quotient of $\delta^0$, i.e.~
\begin{equation}
\begin{tikzcd}
& H^0(C) \ar[d] \ar[rd,"\delta^0"] \\
0 \ar[r] & \lv E_2^{1,0} \ar[r] & H^1(A)
\end{tikzcd}
\label{eq:connecting-hom-conclusion}
\end{equation}
commutes, with the leftmost vertical arrow being the quotient by the image of $H^0(B)$.

Finally, let $M$ be a complex manifold and $p: \widetilde{M}\rightarrow M$
a regular cover with covering group $\Gamma$.  Let $\mathcal{L}$ be a locally free sheaf on $M$ and consider its pullback $p^{\star}\mathcal{L}$ to $\tilde{M}$.  This is a $\Gamma$-equivariant sheaf and therefore $H^{i}(\widetilde{M}, p^{\star}\mathcal{L})$ admits the structure of a $\Gamma$-module.  
Let $G$ be the global sections functor on $\widetilde{M}$ and $F$ the functor taking a $\Gamma$-module to the sub-module of $\Gamma$-invariants.  

Then, we have $(F\circ G)(p^{\star}\mathcal{L})=H^{0}(M, \mathcal{L}).$  The Grothendieck spectral sequence \cite{GRO57} can be applied to this composition to realize the space of sections (and indeed, all cohomology of $\mathcal{L}$) as the limit of a spectral sequence of a double complex $X$.
The construction of the double complex involves certain choices of resolutions, but using the Dolbeault resolution of $p^*\mathcal{L}$ and a chain complex $C^\param(\Gamma,-)$ computing group cohomology, it becomes
\begin{equation}
\label{eq:groth-double}
X^{p,q} = C^p(\Gamma,A^{0,q}(\Tilde{M},p^\star \L)).
\end{equation}
 Here $A^{p,q}(Y,\mathcal{E})$ denote the space of smooth $(p,q)$-forms on a complex manifold $Y$ with values in a holomorphic vector bundle $\mathcal{E}$.
 As with the cohomology of $p^\star \mathcal{L}$, the spaces $A^{0,q}(\Tilde{M},p^\star \L)$ have a natural $\Gamma$-module structure coming from the $\Gamma$-equivariant structure of $p^\star \mathcal{L}$.
 The construction of this spectral sequence is discussed in more detail in \autoref{app:homological}.

\begin{proof}[Proof of \autoref{thm:pent}.]
Let us apply the Grothendieck spectral sequence to the regular covering $\Omega_\rho^{I} \to \W_\rho^I$ and the equivariant vector bundle $\Theta_{\Omega_\rho^{I}}$.
The double complex of \eqref{eq:groth-double} becomes
$$X^{p,q} := C^p(\Gamma, A^{0,q}(\Omega_\rho^{I},\Theta_{\Omega_\rho^{I}}))$$
and we will take $C^p$ to be the group of inhomogeneous $p$-cochains for group cohomology.
The vertical differential of $X$ is $\dbar$ and the horizontal differential is $d_\Gamma$.
The limit of the vertical or horizontal spectral sequence associated to $X$ is the cohomology of the total complex, which by Grothendieck's theorem (explained in this instance in \autoref{thm:applied-groth} of \autoref{app:homological}) is canonically isomorphic to $H^\param(\W_\rho^{I},\Theta_{\Omega_\rho^{I}})$.

Considering the vertical spectral sequence of $X$, we find
$$\lv E_1^{p,q}(X) = C^p(\Gamma,H^q(\Omega_\rho^{I},\Theta_{\Omega_\rho^{I}})), \quad \lv E_2^{p,q}(X) = H^p(\Gamma,H^q(\Omega_\rho^{I},\Theta_{\Omega_\rho^{I}})),$$
and therefore the LDT sequence becomes
\begin{equation}
\label{eq:ltds1}
\begin{tikzcd}
0 \ar[r] &  H^1(\Gamma,H^0(\Omega_\rho^{I},\Theta_{\Omega_\rho^{I}})) \ar[r] & H^1(\W_\rho,\Theta_{\W_\rho^{I}}) \ar[r] & H^0(\Gamma, H^1(\Omega_\rho^{I},\Theta_{\Omega_\rho^{I}}))
\end{tikzcd}
\end{equation}
which is the lower row of \eqref{eq:pent-cd}.

To relate this construction to the Kodaira-Spencer map, we recall that $\KS_\rho$ is the connecting homomorphism of the exact sequence of sheaves
\begin{equation}
\label{eqn:ksexact}
0 \to \Theta_{\W_\rho^{I}} \to \iota_{\rho}^{*}\Theta_{\W^{I}} \to T_{\rho}\A_I
\otimes_{\C} \O_{\W_{\rho}^{I}} \to 0
\end{equation}
where $\iota_{\rho}: \W_{\rho}^{I}\rightarrow \W^{I}$ is the inclusion.

We can form another double complex $Y$ which will give rise to this map through the construction of \eqref{eq:last-two-thirds}--\eqref{eq:connecting-hom-conclusion}.
Define
$$Y^{p,q} = \begin{cases}
A^{0,q}(\W_\rho^{I},\iota_{\rho}^{*}\Theta_{\W^{I}}) & p=0\\
A^{0,q}(\W_\rho^{I},T_\rho \A_I \otimes_{\C} \O_{\W_{\rho}^{I}}) & p=1\\
0 & \text{otherwise}
\end{cases}
$$
where the vertical differential is $\dbar$, and where the nontrivial horizontal differential is induced by the vector bundle map $\iota_{\rho}^{*}\Theta_{\W^{I}} \to T_\rho \A_I \otimes_{\C}\O_{\W_{\rho}^{I}}$.
The total cohomology of this complex is easily seen to be $H^\param(\W_\rho^{I},\Theta_{\W_\rho^{I}})$:
The kernel of the horizontal differential is isomorphic to $A^{0,q}(\W_\rho^{I},\Theta_{\W_\rho^{I}})$, making the horizontal spectral sequence degenerate at the $\lh E_1(Y)$ page to the Dolbeault resolution of $\Theta_{\W_\rho^{I}}$.

Turning to the vertical spectral sequence of $Y$, we have
$$\lv E_1^{p,q}(Y) = \begin{cases}
H^q(\W_\rho^{I}, \iota_{\rho}^{*}\Theta_{\W^{I}}) & p=0\\
H^q(\W_\rho^{I}, T_\rho \A_I \otimes_{\C}\O_{\W_{\rho}^{I}}) & p=1\\
0 & \text{otherwise}
\end{cases},$$
which is \eqref{eq:last-two-thirds} specialized to this case.
By \eqref{eq:connecting-hom-conclusion}, the LDT sequence contains the connecting homomorphism, and we obtain
\begin{equation}
\begin{tikzcd}
& \hspace{-42mm}  H^0(\W_\rho^{I}, T_\rho \A_I \otimes_{\C}\O_{\W_{\rho}^{I}}) \simeq T_\rho\A_I \ar[d] \ar[rd,"\KS_\rho"] \\
0 \ar[r] & \lv E_2^{1,0}(Y) \ar[r] & H^1(\W_\rho^{I},\Theta_{\W_\rho^{I}})
\end{tikzcd}
\label{eq:ks-interpretation}
\end{equation}

Next, we claim that there exists a map of double complexes $F : Y \to X$ that induces an isomorphism $H^{1}(Y)\rightarrow H^{1}(X)$.
Before constructing the map, we note that its existence gives the diagram we seek.
By functoriality, such a map $F$ induces a commutative diagram mapping the LDT sequence of $\lv E(X)$ to that of $\lv E(Y)$ (which was determined in \eqref{eq:ltds1}), and in combination with \eqref{eq:ks-interpretation} we obtain:
\begin{equation}
\begin{tikzcd}
& \colorboxed{ T_\rho\A_I } \ar[d] \ar[rd,"\KS_\rho"] \\
0 \ar[r] & \lv E_2^{1,0}(Y) \ar[r] \ar[d,"F_*"] \ar[r] &  \colorboxed{ H^1(\W_\rho^{I},\Theta_{\W_\rho^{I}}) } \ar[d,"\sim"] \ar[r] & \lv E_2^{0,1}(Y) \ar[d,"F_*"] \\
\colorboxed{ 0 } \ar[r] &  \colorboxed{ H^1(\Gamma,H^0(\Omega_\rho^{I},\Theta_{\Omega_\rho^{I}})) } \ar[r] & H^1(\W_\rho^{I},\Theta_{\W_\rho^{I}}) \ar[r] & \colorboxed{ H^0(\Gamma, H^1(\Omega_\rho^{I},\Theta_{\Omega_\rho^{I}})) }
\end{tikzcd}
\label{eq:ltds-diagram}
\end{equation}
The induced diagram on the highlighted terms then becomes the desired one \eqref{eq:pent-cd} after using the isomorphism $T_\rho\A_I \simeq Z^1(\Gamma,\g_\rho)$.

The map of complexes $F$ will be constructed using the splitting of the tangent bundle of $\Omega^{I}$, which we now discuss.
Consider the short exact sequence
\begin{align}
0\rightarrow \Theta_{\Omega_{\rho}^{I}}\rightarrow \iota_{\rho}^{\star}\Theta_{\Omega^{I}} \rightarrow T_{\rho}\A_I \otimes \mathcal{O}_{\Omega_{\rho}^{I}}\rightarrow 0
\end{align}
where $\iota_{\rho}: \Omega_{\rho}^{I}\rightarrow \Omega^{I}$ is the inclusion.
Since $\Omega^{I}\subset \F\times \A_{I}$ is an open subset of a product, the bundle $\iota_{\rho}^{\star}\Theta_{\Omega^{I}}$ admits a canonical holomorphic splitting $\iota_{\rho}^{\star}\Theta_{\Omega^{I}}\simeq \mathcal{H}\oplus \mathcal{V}$ 
where $\mathcal{H} =T_{\rho}\A_I\otimes \mathcal{O}_{\Omega_{\rho}^{I}}$
and $\mathcal{V}=\Theta_{\Omega_{\rho}^{I}}.$
For a local section $\sigma$ of $\iota_{\rho}^{\star} \Theta_{\Omega^{I}}$, or for a form with values in this bundle, we denote by $\sigma^{\mathcal{H}}$ and $\sigma^{\mathcal{V}}$ its components with respect to the splitting.

The action of $\Gamma$ on $\F \times \A_{I}$ by $\gamma \cdot (x,\rho) := (\rho(\gamma)\cdot x, \rho)$ preserves the vertical slice $\iota_\rho(\Omega_{\rho}^{I})$ and hence both $\Theta_{\Omega^{I}}$ and $\iota_{\rho}^{\star} \Theta_{\Omega}^{I}$ have $\Gamma$-equivariant structures.
While this action preserves the vertical distribution $\mathcal{V}$, it does not preserve $\mathcal{H}$.
Indeed, the failure of invariance of $\mathcal{H}$ precisely captures the infinitesimal action of a deformation of $\rho$ on $\F$ as follows:
Applying the differential of the action $\rho(\gamma)\cdot x$ to a horizontal vector $(0, \dot{\rho}) \in T_{(x,\rho)}(\F \times \A_{I})$ gives $(\dot{\rho}(\gamma)^\sharp(y), \dot{\rho})\in T_{(y,\rho)}(\F \times \A_I)$, where $y=\rho(\gamma)\cdot x$ and $v^\sharp \in H^0(\F, \Theta_{\F})$ denotes the action vector field corresponding to Lie algebra element $v \in \g$.

Now we define the map $F$.
Since the complex $Y$ has only two nontrivial columns, we specify its action on each column separately.
We begin with $F^{0,q}$.
Since $Y^{0,q} = A^{0,q}(\W_\rho^{I},\iota_{\rho}^{\star} \Theta_{\W^{I}})$ and $X^{0,q}  = C^0(\Gamma,A^{0,q}(\Omega_\rho^{I},\Theta_{\Omega_\rho^{I}})) = A^{0,q}(\Omega_\rho^{I},\Theta_{\Omega_\rho^{I}})$, we seek a map
$$ A^{0,q}(\W_\rho^{I}, \iota_{\rho}^{\star} \Theta_{\W^{I}}) \to A^{0,q}(\Omega_\rho^{I},\Theta_{\Omega_\rho^{I}}).$$
Given a form $\alpha\in A^{0,q}(\W_\rho^{I},\iota_{\rho}^{\star} \Theta_{\W^{I}})$, let $\tilde{\alpha}$ denote its pullback to a form on $\Omega_\rho^{I}$ with values in $\iota_{\rho}^{\star} \Theta_{\Omega^{I}}$.
Any such lifted form is, of course, $\Gamma$-invariant.
Then we define 
\begin{equation}
F^{0,q}(\alpha) = \tilde{\alpha}^{\mathcal{V}}.
\end{equation}

Turning to $F^{1,q}$, we must define a map
\[
A^{0,q}(\W_{\rho}^{I}, T_\rho \A_I\otimes_{\C} \O_{\W_{\rho}^{I}}) \to  C^1(\Gamma,A^{0,q}(\Omega_\rho^{I},\Theta_{\Omega_\rho^{I}}))
\]
Now let $\beta \in A^{0,q}(\W_{\rho}^{I}, T_\rho \A_I\otimes_{\C} \O_{\W_{\rho}^{I}}).$  The pullback of $\beta$ to $\Omega_{\rho}^{I}$ can then be regarded as a form with values in $\mathcal{H}$ using the splitting discussed above.
Let us denote by $\hat{\beta}$ the resulting horizontal $\iota_{\rho}^{\star}\Theta_{\Omega^{I}}$-valued form.
The form $\hat{\beta}$ is typically not $\Gamma$-invariant, as the horizontal distribution is not preserved by the action of $\Gamma$. 
The failure of $\Gamma$-invariance is naturally encoded in a $1$-cocycle for $\Gamma$, which leads to the definition of $F^{1,q}$:
\begin{equation}
F^{1,q}(\beta)(\gamma) = (\hat{\beta} - \gamma\cdot \hat{\beta})^{\mathcal{V}} = - ( \gamma\cdot \hat{\beta} )^{\mathcal{V}}
\end{equation}
where $\gamma \cdot \hat{\beta}$ is the induced $\Gamma$-action on $A^{0,q}(\Omega_{\rho}, \iota_{\rho}^{\star}\Theta_{\Omega_{\rho}^{I}}).$
Since the vertical projection is holomorphic, the map $F$ commutes with $\dbar$, and hence is a map of vertical complexes.
To check that it is a map of double complexes, we must verify that $d_\Gamma(F^{0,q}(\alpha)) = F^{1,q}(p_*\alpha)$ where
$p: \iota_{\rho}^{\star}\Theta_{\Omega^{I}}\rightarrow T_\rho \A_I\otimes_{\C} \O_{\W_{\rho}^{I}}.$
It is immediate from the definitions that $\hat{p_*\alpha} = \tilde{\alpha}^{\mathcal{H}}$, and thus for $\gamma \in \Gamma$ we have
\begin{equation*}
\begin{split}
d_\Gamma(F^{0,q}(\alpha)) (\gamma) &= \gamma\cdot(\tilde{\alpha}^{\mathcal{V}}) - \tilde{\alpha}^{\mathcal{V}}\\
F^{1,q}(p_*\alpha) (\gamma) &= - (\gamma\cdot (\tilde{\alpha}^{\mathcal{H}}))^{\mathcal{V}}
\end{split}
\end{equation*}
Recalling that the vertical distribution is $\Gamma$-invariant, we have $\gamma\cdot(\tilde{\alpha}^{\mathcal{V}}) = \left ( \gamma\cdot(\tilde{\alpha}^{\mathcal{V}}) \right )^{\mathcal{V}}$; using this, and the splitting $\tilde{\alpha} = \tilde{\alpha}^{\mathcal{H}} + \tilde{\alpha}^{\mathcal{V}}$, we calculate
\begin{equation*}
\begin{split}
d_\Gamma(F^{0,q}(\alpha)) (\gamma) - F^{1,q}(p_*\alpha) (\gamma) &= \left (\gamma\cdot(\tilde{\alpha}^{\mathcal{V}}) \right)^{\mathcal{V}} - \tilde{\alpha}^{\mathcal{V}} + \left (\gamma\cdot (\tilde{\alpha}^{\mathcal{H}}) \right )^{\mathcal{V}}\\
&= \left ( \gamma\cdot(\tilde{\alpha}^{\mathcal{H}} + \tilde{\alpha}^{\mathcal{V}}) - \tilde{\alpha} \right )^{\mathcal{V}}\\
&= (\gamma\cdot \tilde{\alpha} - \tilde{\alpha})^{\mathcal{V}} = 0.
\end{split}
\end{equation*}
where vanishing of the last quantity follows because $\tilde{\alpha}$ is $\Gamma$-invariant.
We conclude $F$ is a map of double complexes.

We now show that $F$ induces an isomorphism on $H^{1}.$  Observe that
$$ \lh E_1^{p,q}(X) \simeq \begin{cases} A^{0,q}(\W_\rho^{I},\Theta_{\W_\rho^{I}}) & p=0\\ H^{p}(\Gamma,A^{0,q}(\W_\rho^{I},\Theta_{\W_\rho^{I}})) & p>0\end{cases},$$
and
$$ \lh E_1^{p,q}(Y) \simeq \begin{cases} A^{0,q}(\W_\rho^{I},\Theta_{\W_\rho^{I}}) & p=0\\ 0 & p>0\end{cases}.$$
Chasing the definition of $F,$ it is straightforward to see that the induced map between spectral sequences $F: \lh E_1^{p,q}(Y)\rightarrow E_1^{p,q}(X)$ is the identity for $p=0$ and zero elsewhere.  Upon passing to the $E_{2}$-page, and remembering that each spectral sequence converges to $H^{\param}(\W_\rho^{I},\Theta_{\W_\rho^{I}}),$ it quickly follows that $F$ is an isomorphism (in fact the identity) on $H^{1}.$  

Finally, it remains to check the description of the vertical map in diagram \eqref{eq:pent-cd}. 
Since this map is constructed from the leftmost column of vertical maps in \eqref{eq:ltds-diagram}, we see that to characterize it, we must first map an arbitrary element $\dot{\rho} \in Z^1(\Gamma,\g_\rho) \simeq T_\rho \A_I$ to
$\lv E_2^{1,0}(Y) = \coker\left (A^{0,0}(\W_\rho^{I},\iota_{\rho}^{*}\Theta_{\W^{I}}) \to A^{0,0}(\W_\rho^{I},T_\rho \A_I\otimes_{\C} \O_{\W_{\rho}^{I}}) \right )$.
Of course $A^{0,0}(\W_\rho^{I},T_\rho \A_I\otimes_{\C} \O_{\W_{\rho}^{I}})$ is simply the space $C^\infty(\W_\rho^{I},T_\rho\mathcal{A})$ of smooth $T_\rho\mathcal{A}$-valued functions.
The image of $\dot{\rho}$ then corresponds to the equivalence class of the \emph{constant} function $\W_\rho^I \to T_\rho\mathcal{A}$ with value $\dot{\rho}$.

Next we must apply the lower left vertical map from \eqref{eq:ltds-diagram}, which is induced by $F^{1,0}$.
As observed in the discussion of the horizontal distribution, pushing the horizontal section over $\Omega_\rho^{I}$ corresponding to $\dot{\rho}$ forward by an element $\gamma \in \Gamma$ gives a section whose vertical component is the vector field $\dot{\rho}(\gamma)^\sharp$ on $\Omega_{\rho}^{I}$.
Since $F^{1,0}$ is defined by taking this vertical component, we find that the image of $\dot{\rho}$ under the vertical map of \eqref{eq:pent-cd} is the element of $H^1(\Gamma,H^0(\Omega_\rho^i, \Theta_{\Omega_\rho^i}))$ corresponding to the $1$-cochain $\gamma \mapsto \left . \dot{\rho}(\gamma)^\sharp \right |_{\Omega^I_\rho}$, as claimed.
\end{proof}

\section{Small limit sets and the Anosov family}
\label{sec:small}

Building on \autoref{thm:pent}, we will now explore the deformation-theoretic consequences of small Hausdorff dimension for the limit set of an Anosov representation.
As before we suppose $(\rho,I)$ is a thickened Anosov homomorphism of type $(P_A,\F)$ as defined in \autoref{subsec:domains}.
Recall from \autoref{subsec:hdim} that such $(\rho,I)$ is said to be $k$-small if its limit set $\Lambda_\rho^I \subset \F$ is a null set for the Hausdorff measure $\mathcal{H}_{2N-k}$ of dimension $2N-k$, where $N = \dim_\C\F$.
In general, whenever we refer to the $d$-dimensional Hausdorff measure $\mathcal{H}_d$ on a manifold, it is assumed to be the one associated to the distance function of some Riemannian metric.
Our statements involving such measures will be true regardless of which Riemannian metric is used.

Let $\z(\rho)$ denote the Lie algebra of the centralizer $Z(\rho) := Z_G(\rho(\Gamma)$.  Since $\g\simeq H^{0}(\F, \Theta_{\F})$ consists of holomorphic vector fields on $\F$, restriction to the open $\rho(\Gamma)$-invariant set $\Omega_\rho^{I}$ gives a natural injective map of $\Gamma$-modules
\begin{equation}
\label{eq:h0Omega}
\g_\rho \to H^0(\Omega_\rho^{I}, \Theta_{\Omega_\rho^{I}}).
\end{equation}
Similarly, since $\z(\rho)$ consists of holomorphic vector fields on $\F$ invariant under $\rho(\Gamma)$, restricting these to $\Omega_\rho^{I}$ and taking the quotient gives a natural injective linear map
\begin{equation}
\label{eq:h0W}
\z(\rho) \to H^0(\W_\rho^{I}, \Theta_{\W_\rho^{I}}),
\end{equation}
which is just the $\Gamma$-invariant part of \eqref{eq:h0Omega}.

The following result of Harvey is the core principle that we use to draw complex-analytic conclusions from a $k$-smallness hypothesis:

\begin{thm}[{\cite[Theorems 1 and 4]{HAR74}}]\label{thm:harvey}
Let $Y$ be a complex manifold of dimension $n$, and let $m$ be a nonnegative integer.
If $E \subset Y$ is a closed subset satisfying $\mathcal{H}_{2n-2m-2}(E)=0$, then every locally free sheaf $\mathcal{L}$ on $Y - E$ is the restriction of a unique locally free sheaf on $Y$ (which we also denote by $\mathcal{L}$), and the inclusion $(Y - E) \into Y$ induces an isomorphism
\[ H^i(Y,\mathcal{L}) \to H^i(Y -E, \mathcal{L}) \]
for all $0 \leq i \leq m$.
\end{thm}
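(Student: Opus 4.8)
The plan is to read this as a removable-singularity theorem, split it into its two assertions --- the unique extension of the locally free sheaf $\mathcal{L}$ across $E$, and the isomorphism $H^i(Y,\mathcal{L})\xrightarrow{\sim}H^i(Y-E,\mathcal{L})$ in degrees $\leq m$ --- and drive both with the long exact sequence of cohomology with supports. Writing $j:Y-E\into Y$ for the inclusion, the second assertion fits into
\[
\cdots \to H^i_E(Y,\mathcal{L}) \to H^i(Y,\mathcal{L}) \xrightarrow{\;j^{*}\;} H^i(Y-E,\mathcal{L}) \to H^{i+1}_E(Y,\mathcal{L}) \to \cdots,
\]
so it follows once one shows the local cohomology groups $H^i_E(Y,\mathcal{L})$ vanish for $0\leq i\leq m+1$. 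Since $\mathcal{H}_{2n-2m-2}(E)=0$ forces $\mathcal{H}_{2n-2j-2}(E)=0$ for all $0\leq j\leq m$, the whole argument only ever uses vanishing of Hausdorff measure at, or above, the critical dimension $2n-2m-2$.

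For the first assertion I would reduce to the classical removable-singularity theorem for holomorphic \emph{functions} across closed sets $E$ with $\mathcal{H}_{2n-2}(E)=0$ (in the tradition of Riemann, in the sharp unbounded form due to Shiffman), which applies because $2n-2m-2\leq 2n-2$. Covering $Y$ by coordinate balls and trivializing $\mathcal{L}$ over the induced open cover of $Y-E$, one extends the $\GL_r(\O)$-valued transition cocycle --- together with its entrywise inverse, so invertibility survives --- across $E$; the cocycle identities persist because $Y-E$ is dense, producing a locally free sheaf on $Y$ restricting to $\mathcal{L}$, unique by the same density. The degree-zero case of the cohomology statement, $H^0(Y,\mathcal{L})\xrightarrow{\sim}H^0(Y-E,\mathcal{L})$, is this same extension principle applied to sections.

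For the vanishing $H^i_E(Y,\mathcal{L})=0$, $i\leq m+1$, I would localize: the local-to-global spectral sequence $H^p(Y,\mathcal{H}^q_E(\mathcal{L}))\Rightarrow H^{p+q}_E(Y,\mathcal{L})$ reduces this to vanishing of the local-cohomology \emph{sheaves} $\mathcal{H}^q_E(\mathcal{L})$ for $q\leq m+1$, which is stalkwise and --- after trivializing $\mathcal{L}$ --- amounts to $H^q_E(U,\O)=0$ for a small ball $U$ and $q\leq m+1$. For a ball $H^j(U,\O)=0$ when $j\geq 1$, so the comparison sequence above turns this into: holomorphic functions extend from $U-E$ to $U$ (Shiffman again), together with $H^j(U-E,\O)=0$ for $1\leq j\leq m$. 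To kill such a Dolbeault class, represented by a $\dbar$-closed $(0,j)$-form $\alpha$ on $U-E$, I would pass to currents: extending $\alpha$ by zero gives a $(0,j)$-current $T$ on $U$ whose $\dbar T$ is supported on $E$, and a removability theorem for currents supported on sets of sufficiently small Hausdorff measure --- in the spirit of Federer's vanishing of flat chains in thin sets, in the sharper forms available for $\dbar$-closed currents --- should force $\dbar T=0$ on $U$. Solving $\dbar u=T$ on the Stein ball $U$ and invoking elliptic regularity for the $\dbar$-Laplacian to make $u$ smooth on $U-E$ then exhibits $\alpha$ as $\dbar u$ there.

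I expect the current-removability step to be the main obstacle, for the reason that makes the hypothesis delicate: a complex submanifold $E$ of the critical real dimension $2n-2m-2$ has \emph{positive} $\mathcal{H}_{2n-2m-2}$-measure and is genuinely not removable, so the proof must use that the measure is \emph{zero}, not merely locally finite, and it must track exactly how the admissible exponent degrades with cohomological degree. Concretely one must control the competition between the possible growth of a Dolbeault representative near $E$ and the mass carried near $E$ either by cutoff functions adapted to $E$ (in an $L^2$-$\dbar$ argument) or by the current $\dbar T$ (in the structure-of-currents argument) --- a capacity estimate keyed to the Hausdorff condition. This is the one place where soft homological algebra does not suffice; everything else --- the support sequences, the spectral sequence, the localization, and the sheaf extension --- is bookkeeping around that analytic core.
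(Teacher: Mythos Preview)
The paper does not give its own proof of this statement: \autoref{thm:harvey} is quoted from Harvey \cite{HAR74} and used as a black box. So there is nothing in the paper to compare your proposal against.

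That said, your outline is a reasonable reconstruction of how such removable-singularity theorems are proved, and is in the spirit of Harvey's original argument. The reduction via the support exact sequence and the local-to-global spectral sequence to a local vanishing statement is standard and correct, and the invocation of Shiffman for the degree-zero extension (both of sections and of transition cocycles) is exactly right. You are also right to flag the current-removability step as the genuine analytic core: this is precisely what Harvey's paper establishes, and your proposal does not prove it so much as locate it. If your aim were actually to supply a proof, you would need to carry out the estimate showing that a $\dbar$-closed current of bidegree $(0,j+1)$ supported on a set with $\mathcal{H}_{2n-2m-2}(E)=0$ vanishes for $j\leq m$; the heuristic you give (cutoff functions versus mass near $E$, capacity-type estimate) points in the right direction but is not yet an argument.
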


Applying this to our situation of interest, we find:

\begin{thm}\mbox{}
\label{thm:h0}
Let $(\rho, I)$ be a thickened Anosov homomorphism of type $(P_{A}, \F).$
\begin{rmenumerate}
\item If $(\rho, I)$ is $2$-small, then the maps $\g_\rho \to H^0(\Omega_\rho^{I}, \Theta_{\Omega_\rho^{I}})$ and $\z(\rho) \to H^0(\W_\rho^{I}, \Theta_{\W_\rho^{I}})$ are isomorphisms.
\item If $(\rho, I)$ is $2$-small and good (as defined in \autoref{subsec:charscheme}), then $H^0(\W_\rho^{I},\Theta_{\W_\rho^{I}}) = 0$.
\item If $(\rho, I)$ is $4$-small, then $H^1(\Omega_\rho^{I},\Theta_{\Omega_\rho^{I}}) = 0$.
\end{rmenumerate}
\end{thm}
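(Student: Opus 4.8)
The plan is to apply Harvey's theorem (\autoref{thm:harvey}) directly on the flag variety $\F$, using the $k$-smallness hypothesis to control the Hausdorff measure of the limit set $\Lambda_\rho^I$. For part (i): set $Y = \F$, $n = N = \dim_\C\F$, $E = \Lambda_\rho^I$, and $\L = \Theta_\F$. The $2$-small hypothesis says $\mathcal{H}_{2N-2}(\Lambda_\rho^I) = 0$; taking $m = 0$ in Harvey's theorem (which requires $\mathcal{H}_{2n-2m-2}(E) = \mathcal{H}_{2N-2}(E) = 0$), we conclude that $H^0(\F, \Theta_\F) \to H^0(\Omega_\rho^I, \Theta_{\Omega_\rho^I})$ is an isomorphism. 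Since the standing assumption gives $\g \simeq H^0(\F,\Theta_\F)$, and this identification is $G$-equivariant hence $\Gamma$-equivariant via $\Ad\circ\rho$, the composite is exactly the map \eqref{eq:h0Omega}, so that map is an isomorphism. Taking $\Gamma$-invariants (which is exact on the left) and using that the $\Gamma$-invariant part of $H^0(\Omega_\rho^I,\Theta_{\Omega_\rho^I})$ is $H^0(\W_\rho^I,\Theta_{\W_\rho^I})$ — since $\Omega_\rho^I \to \W_\rho^I$ is a $\Gamma$-cover — and that the $\Gamma$-invariant part of $\g_\rho$ is $\z(\rho) = H^0(\Gamma,\g_\rho)$, we deduce that $\z(\rho) \to H^0(\W_\rho^I,\Theta_{\W_\rho^I})$ is an isomorphism as well.

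For part (ii): if $(\rho,I)$ is good then by definition $Z(\rho) = Z_G(\rho(\Gamma))$ is trivial, so its Lie algebra $\z(\rho)$ vanishes; combined with the isomorphism from part (i), this forces $H^0(\W_\rho^I,\Theta_{\W_\rho^I}) = 0$. For part (iii): again apply Harvey's theorem with $Y = \F$, $E = \Lambda_\rho^I$, $\L = \Theta_\F$, but now take $m = 1$. This requires $\mathcal{H}_{2N - 4}(\Lambda_\rho^I) = 0$, which is precisely the $4$-small hypothesis. Harvey's theorem then gives $H^i(\F,\Theta_\F) \xrightarrow{\sim} H^i(\Omega_\rho^I,\Theta_{\Omega_\rho^I})$ for $i = 0, 1$. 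In particular $H^1(\Omega_\rho^I,\Theta_{\Omega_\rho^I}) \simeq H^1(\F,\Theta_\F)$. Now one invokes the classical fact that a complex flag variety $\F = G/P_D$ has $H^1(\F,\Theta_\F) = 0$ — this is the infinitesimal rigidity of rational homogeneous spaces (Bott's vanishing theorem / the Borel–Weil–Bott computation, cf. the standard references on flag varieties such as \cite{BE89}\cite{BGG82}, or the deformation-rigidity results going back to Bott). Hence $H^1(\Omega_\rho^I,\Theta_{\Omega_\rho^I}) = 0$.

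The main obstacle — or rather the point requiring the most care — is the bookkeeping around the $\Gamma$-equivariance and the passage to invariants in part (i): one must be sure that the isomorphism supplied by Harvey's theorem is compatible with the $\Gamma$-module structures (it is, because the $\Gamma$-action on both sides is induced by the $G$-action on $\F$ via $\rho$, and Harvey's isomorphism is simply restriction of sections, which is manifestly $G$-equivariant), and that taking $\Gamma$-invariants commutes with this identification. The only genuinely external input is the vanishing $H^1(\F,\Theta_\F) = 0$ for flag varieties, which should be cited rather than reproved. Everything else is a direct application of the already-stated \autoref{thm:harvey} together with the standing hypothesis $\g \simeq H^0(\F,\Theta_\F)$.
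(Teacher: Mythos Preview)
Your proposal is correct and follows essentially the same route as the paper's proof: apply \autoref{thm:harvey} with $Y=\F$, $E=\Lambda_\rho^I$, $\mathcal{L}=\Theta_\F$ and $m=0$ (resp.\ $m=1$) for parts (i)--(ii) (resp.\ (iii)), then invoke $\g\simeq H^0(\F,\Theta_\F)$, pass to $\Gamma$-invariants, and cite Bott's theorem for $H^1(\F,\Theta_\F)=0$. Your discussion of the $\Gamma$-equivariance of the restriction map is in fact more explicit than the paper's own treatment.
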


\begin{proof}
First we observe that if $\rho$ is $k$-small for $k\geq0$, then the associated domain in $\mathcal{F}$ is nonempty and hence $\rho \in \A_I$.

If $\rho$ is $(2m+2)$-small, then the hypotheses of \autoref{thm:harvey} are satisfied for the complex manifold $\F$, the closed subset $\Lambda_\rho^{I}$, and any locally free sheaf $\mathcal{L}$.
We will apply the theorem in this way several times.

For statement (i), note that $\g = H^0(\F,\Theta_\F)$ and take $m=0$ and $\mathcal{L} = \Theta_\F$.
The conclusion of \autoref{thm:harvey} is this case is that the $\Gamma$-module map $\g_\rho \simeq H^0(\Omega_\rho^{I}, \Theta_{\Omega_\rho^{I}})$ is an isomorphism (of vector spaces, and hence of $\Gamma$-modules) for any $2$-small thickened Anosov homomorphism $(\rho,I)$.
The associated isomorphism of $\Gamma$-invariant subspaces then gives
\[ 
\z(\rho) \simeq H^0(\Omega_\rho^{I}, \Theta_{\Omega_\rho^{I}})^\Gamma \simeq H^0(\W_\rho^{I}, \Theta_{\W_\rho^{I}}),
\]
completing the proof of (i).

Statement (ii) is an immediate consequence of (i) and the fact that $\z(\rho)=0$ for good representations.

Finally, if we proceed as in (i) but take $m=1$, we find that for $4$-small representations, $H^1(\Omega_\rho^{I},\Theta_{\Omega_\rho^{I}}) \simeq H^1(\F,\Theta_\F)$.
By a theorem of Bott \cite{BOT57}, $H^1(\F,\Theta_\F)=\{0\}$ and we obtain (iii).
\end{proof}

Using the main result of \autoref{thm:pent} we can give a criterion for completeness of the Anosov family:

\begin{thm}
\label{thm:completeness}
Let $(\rho, I)$ be a thickened Anosov homomorphism of type $(P_{A}, \F)$ such that $(\rho, I)$ is $4$-small.  Then the Kodaira-Spencer map of the Anosov family at $\rho$ factors as the natural surjection $Z^1(\Gamma,\g_\rho) \to H^1(\Gamma,\g_\rho)$ composed with an isomorphism $H^1(\Gamma,\g_\rho) \simeq H^1(\W_\rho^{I}, \Theta_{\W_\rho^{I}})$.  In particular the Kodaira-Spencer map is surjective, with kernel equal to $B^1(\Gamma,\g_\rho)$.
\end{thm}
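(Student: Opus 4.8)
The plan is to evaluate the commutative diagram of \autoref{thm:pent} using the isomorphism and vanishing statements of \autoref{thm:h0}. First note that $4$-smallness forces $\rho\in\A_I$, so \autoref{thm:pent} applies. Since a $4$-small homomorphism is in particular $2$-small, \autoref{thm:h0}(i) says that the restriction map \eqref{eq:h0Omega} is an isomorphism of $\Gamma$-modules $\g_\rho \xrightarrow{\simeq} H^0(\Omega_\rho^I, \Theta_{\Omega_\rho^I})$, and \autoref{thm:h0}(iii) gives $H^1(\Omega_\rho^I, \Theta_{\Omega_\rho^I}) = 0$, hence also $H^0(\Gamma, H^1(\Omega_\rho^I, \Theta_{\Omega_\rho^I})) = 0$. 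Substituting these into the bottom row of the diagram of \autoref{thm:pent}, the rightmost term vanishes and the leftmost becomes $H^1(\Gamma,\g_\rho)$; exactness of that row then shows that the remaining arrow $H^1(\Gamma,\g_\rho) \to H^1(\W_\rho^I, \Theta_{\W_\rho^I})$ is an isomorphism.

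Next I would identify the vertical arrow of the diagram---a priori a map $Z^1(\Gamma,\g_\rho) \to H^1(\Gamma, H^0(\Omega_\rho^I, \Theta_{\Omega_\rho^I}))$---with the canonical projection $Z^1(\Gamma,\g_\rho) \to H^1(\Gamma,\g_\rho)$ after transporting coefficients via the isomorphism of the previous step. For this one uses the explicit formula established in the proof of \autoref{thm:pent}: the vertical arrow sends a cocycle $\dot{\rho}$ to the class of the $1$-cochain $\gamma \mapsto \dot{\rho}(\gamma)^\sharp|_{\Omega_\rho^I}$. But $v\mapsto v^\sharp$ is exactly the infinitesimal-action isomorphism $\g\simeq H^0(\F,\Theta_\F)$ and the map $H^0(\F,\Theta_\F)\to H^0(\Omega_\rho^I,\Theta_{\Omega_\rho^I})$ is restriction, so the composite $v\mapsto v^\sharp|_{\Omega_\rho^I}$ is precisely the $\Gamma$-module isomorphism \eqref{eq:h0Omega}. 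Consequently, once coefficients are identified with $\g_\rho$, the vertical arrow carries a cocycle to its own cohomology class, i.e.\ it is the tautological surjection $Z^1(\Gamma,\g_\rho)\to H^1(\Gamma,\g_\rho)$, whose kernel is $B^1(\Gamma,\g_\rho)$ by definition of group cohomology.

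Combining the two steps, commutativity of the diagram of \autoref{thm:pent} gives $\KS_\rho = \iota\circ\pi$, where $\pi\colon Z^1(\Gamma,\g_\rho)\to H^1(\Gamma,\g_\rho)$ is the natural surjection and $\iota\colon H^1(\Gamma,\g_\rho)\xrightarrow{\simeq} H^1(\W_\rho^I,\Theta_{\W_\rho^I})$ is the isomorphism from the first step---which is exactly the claimed factorization. In particular $\KS_\rho$ is surjective and $\ker\KS_\rho = \ker\pi = B^1(\Gamma,\g_\rho)$.

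I expect no serious obstacle: the analytic input (Harvey's theorem on complements of small sets, and Bott vanishing on $\F$) is already packaged into \autoref{thm:h0}, and the spectral-sequence bookkeeping into \autoref{thm:pent}. The one point deserving care is the verification in the second step that the ``infinitesimal action plus restriction'' description of the vertical map coincides, on coefficients, with the $\Gamma$-equivariant isomorphism \eqref{eq:h0Omega}; this is a matter of unwinding definitions rather than a substantive difficulty.
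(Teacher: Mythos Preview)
Your proposal is correct and follows essentially the same route as the paper: apply \autoref{thm:h0} to simplify the bottom row of the diagram in \autoref{thm:pent}, then use the explicit description of the vertical arrow to identify it with the natural surjection $Z^1(\Gamma,\g_\rho)\to H^1(\Gamma,\g_\rho)$. Your write-up is in fact slightly more detailed than the paper's in justifying why the vertical map becomes the tautological projection once coefficients are identified via \eqref{eq:h0Omega}.
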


\begin{proof}
We consider the commutative diagram of \autoref{thm:pent}.
By \autoref{thm:h0}, we have $H^0(\Gamma,H^1(\Omega_{\rho}^{I},\Theta_{\Omega_\rho^{I}}))=0$ and $H^1(\Gamma,H^0(\Omega_{\rho}^{I}, \Theta_{\Omega_\rho^{I}})) \simeq H^1(\Gamma,\g_\rho)$.
Thus the diagram becomes:
\begin{equation}
\begin{tikzcd}
&  Z^1(\Gamma,\g_\rho) \ar[rd,"\KS_\rho"] \ar[d] \\
0 \ar[r] &  H^1(\Gamma,\g_\rho) \ar[r,"\sim"] & H^1(\W_{\rho}^{I},\Theta_{\W_\rho^{I}}) \ar[r] & 0
\end{tikzcd}
\label{eq:pent-cd}
\end{equation}
Finally, by the description of the vertical map given in \autoref{thm:pent}, the vertical map in the diagram above is the natural surjection $Z^1(\Gamma,\g_\rho) \to H^1(\Gamma,\g_\rho)$.
Hence we have obtained the desired factorization of $\KS_\rho$.
\end{proof}

At this point we have completed the proof of \autoref{thm:main-infinitesimal}: Parts (i) and (ii) are contained in \autoref{thm:completeness}, while part (iii) is \autoref{thm:h0}(i).

\autoref{thm:completeness} immediately implies the following rigidity comparison result, which was stated in the introduction as \autoref{cor:main-infinitesimal}:

\begin{cor}\label{cor: rigid}
Let $(\rho, I)$ be a thickened Anosov homomorphism of type $(P_{A}, \F)$ such that $(\rho, I)$ is $4$-small. Then the complex manifold $\W_{\rho}^{I}$ is infinitesimally rigid if and only if the homomorphism $\rho: \Gamma\rightarrow G$ is infinitesimally rigid modulo conjugation.
\end{cor}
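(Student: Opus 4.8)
The plan is to read off the corollary directly from \autoref{thm:completeness}, which has already done the essential work. First I would pin down the two rigidity notions. Following standard usage, the compact complex manifold $\W_{\rho}^{I}$ is \emph{infinitesimally rigid} precisely when $H^1(\W_{\rho}^{I},\Theta_{\W_{\rho}^{I}}) = 0$, i.e.~it admits no nonzero first-order deformation of its complex structure. On the representation side, under the natural isomorphism $T_\rho\A \simeq Z^1(\Gamma,\g_\rho)$ the infinitesimal conjugations constitute the subspace $B^1(\Gamma,\g_\rho)$ (the image of the boundary map $\g \to Z^1(\Gamma,\g_\rho)$), so $\rho$ being \emph{infinitesimally rigid modulo conjugation} means $Z^1(\Gamma,\g_\rho) = B^1(\Gamma,\g_\rho)$, equivalently $H^1(\Gamma,\g_\rho) = 0$. (This matches the description of the differential of the character map $\chi$ recorded in \autoref{prop:tangent}.)

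With these reformulations, the corollary asserts exactly that $H^1(\W_{\rho}^{I},\Theta_{\W_{\rho}^{I}}) = 0$ if and only if $H^1(\Gamma,\g_\rho) = 0$. But the $4$-small hypothesis lets us invoke \autoref{thm:completeness}, which provides a factorization of $\KS_\rho$ through an isomorphism $H^1(\Gamma,\g_\rho) \xrightarrow{\sim} H^1(\W_{\rho}^{I},\Theta_{\W_{\rho}^{I}})$. An isomorphism of complex vector spaces has trivial source if and only if it has trivial target, so the two vanishing conditions are equivalent and the corollary follows at once.

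I do not expect any genuine obstacle here: the entire content is already packaged in \autoref{thm:completeness}, whose proof in turn rests on the identification of $\KS_\rho$ with a spectral-sequence edge map (\autoref{thm:pent}) together with the vanishing statements of \autoref{thm:h0} coming from Harvey's extension theorem and Bott's vanishing theorem. The only point worth a sentence of care is the (standard) interpretation of ``infinitesimal deformations of $\rho$ modulo conjugation'' as the group cohomology $H^1(\Gamma,\g_\rho)$; since this is built into the definitions and into \autoref{prop:tangent}, no additional argument is required, and the proof is essentially a one-line consequence of \autoref{thm:completeness}.
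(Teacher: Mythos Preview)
Your proposal is correct and matches the paper's proof essentially verbatim: the paper simply invokes \autoref{thm:completeness} to obtain $H^{1}(\Gamma,\g_{\rho})\simeq H^{1}(\W_{\rho}^{I},\Theta_{\W_{\rho}^{I}})$ and concludes that one side vanishes if and only if the other does. Your additional sentences unpacking the meaning of the two rigidity notions are accurate and, if anything, make the argument more self-contained than the paper's own one-line proof.
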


\begin{proof}
By \autoref{thm:completeness}, $H^{1}(\Gamma, \g_{\rho})\simeq H^{1}(\W_{\rho}^{I}, \Theta_{\rho}^{I})$ and therefore $H^{1}(\Gamma, \g_{\rho})=\{0\}$ if and only if $H^{1}(\W_{\rho}^{I}, \Theta_{\rho}^{I})=\{0\}.$  This completes the proof.
\end{proof}
\autoref{cor: rigid} gives a method to construct many new examples of infinitesimally rigid complex manifolds.
\begin{example} \label{ex:rigid}
Let $\rho: \Gamma\rightarrow \PSL(2, \C)$ be the inclusion of a torsion free cocompact lattice.  Let $\iota_{n}: \PSL(2, \C)\rightarrow \PSL(n, \C)$ denote the irreducible representation where $n\geq 4.$  By a theorem of Porti \cite{POR13}, $H^{1}(\Gamma, \mathfrak{sl}(n, \C)_{\iota_{n}\circ \rho})=\{0\}.$  The homomorphism $\iota_{n}\circ \rho$ is Anosov with respect to a Borel subgroup of $\PSL(n,\C)$ (see \cite{DS20}) and therefore is $P_{A}$-Anosov for any symmetric parabolic subgroup $P_{A}<\PSL(n, \C).$

Let $\F$ be the variety of complete flags in $\C^{n}.$
For large enough $n$, there always exists balanced ideals $I$ such that the thickened Anosov homomorphism $(\iota_{n}\circ \rho, I)$ of type $(P_{A}, \F)$ is $4$-small (see \cite{DS20}).  Therefore, by \autoref{cor: rigid} we obtain $H^{1}(\W_{\iota_{n}\circ \rho}^{I}, \Theta_{\W_{\iota_{n}\circ \rho}^{I}})=\{0\}.$  Hence, $\W_{\iota_{n}\circ \rho}^{I}$ is infinitesimally rigid.

By \cite[Theorems D, E, and 6.10]{DS20}, each rigid manifold obtained this way has the following properties:
\begin{rmenumerate}
  \item It is not K\"ahler (and hence is not a projective variety),
  \item The Kodaira dimension is $-\infty$, 
  \item The fundamental group is infinite, and
  \item The universal cover does not admit nonconstant holomorphic functions.
\end{rmenumerate}
These properties distinguish our examples from the rigid compact complex manifolds of higher dimension ($\geq\!3$) previously described in the literature, e.g.~in \cite{BOT57}\cite{KS58}\cite{CalVes59}\cite{Rag66}\cite{BC18}\cite{IG21}.

It \emph{is} necessary to exclude small $n$ from this construction to obtain rigidity.
For $n=3$ there is a unique balanced ideal $I$ and $(\rho, I)$ is \emph{not} $4$-small.
In \autoref{ex:GHYS} we noted that the corresponding manifold in this case is often not infinitesimally rigid (as shown in \cite{GHY95}).

We now discuss the geometry of the manifold $\W_{\iota_{n}\circ \rho}^{I}$ a bit more.  By a theorem of Sepp\"{a}nen-Tsanov \cite{ST15}, for every very ample holomorphic $\PSL(2, \C)$-equivariant line bundle $\mathcal{L}$ on $\F,$
there is a corresponding balanced ideal of type $(B, \F)$ and a holomorphic map
\begin{align}\label{alg reduction}
\W_{\iota_{n}\circ \rho}^{I}\rightarrow \F/\!/_{\mathcal{L}} \PSL(2, \C)
\end{align}
with fibers isomorphic to $\Gamma\backslash \PSL(2, \C)$ where
$\F/\!/_{\mathcal{L}} \PSL(2, \C)$ is the projective GIT quotient (polarized by $\mathcal{L}$).  In fact, it is shown in \cite{ST15} that $\F/\!/_{\mathcal{L}} \PSL(2, \C)\simeq \PSL(2, \C)\backslash \Omega_{\iota_{n}\circ \rho}^{I}.$

As noted above, the manifold $\W_{\iota_{n}\circ \rho}^{I}$ is not a K\"{a}hler manifold, therefore it is not a projective variety.
Somewhat informally, this construction shows that the manifold $\W_{\iota_{n}\circ \rho}^{I}$ becomes a complex projective variety after contracting the fibers of the map \eqref{alg reduction}, which are themselves isomorphic to the (often non-rigid) non-K\"ahler examples of Ghys from \autoref{ex:GHYS}.
\end{example}

\medskip

We now apply \autoref{thm:completeness} to local Anosov families over the character variety.  Recall that the complex manifold $\A^\star_I$ of very good Anosov representations having nonempty $\Omega_\rho^I$ was defined in \autoref{subsec:charscheme} (and its relation to other spaces of representations is summarized in \autoref{fig:inclusions}).  The statements that follow are formulated for subsets of this manifold, rather than the larger complex analytic space $\A^\vee_I$, due to the smoothness hypothesis in \autoref{thm:completeness}.

\begin{thm}
\label{cor:universality}
Let $I$ be a balanced ideal of type $(P_{A}, \F).$  
Let $V \subset \A^{\star}_I$ be a $G$-invariant open set such that for every $\rho\in V,$ the thickened Anosov homomorphism $(\rho, I)$ of type $(P_{A}, \F)$ is $4$-small.  Let $\underline{V}\subset \uA^{\star}_I$ be the corresponding quotient.
Then the Anosov family $\underline{\W}^{I}$ over $V$ is universal.
\end{thm}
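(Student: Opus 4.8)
The plan is to check universality pointwise: by definition a family is universal if and only if the associated pointed family $(\underline{\W}^I,(\underline{V},[\rho]),\underline{p})$ is universal for every $[\rho]\in\underline{V}$, so it suffices to verify the hypotheses of \autoref{prop:universality-criterion} at each such point. Those hypotheses are: (a) $[\rho]$ is a smooth point of $\underline{V}$; (b) the Kodaira--Spencer map $\KS_{[\rho]}$ of $\underline{\W}^I$ is an isomorphism; and (c) $H^0(\W_\rho^I,\Theta_{\W_\rho^I})=0$. Conditions (a) and (c) will be quick, while (b) carries the real content.

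For (a), observe that $\underline{V}$ is an open subset of the \emph{complex manifold} $\uA_I^\star$ (see \autoref{subsec:charscheme}), so all of its points are smooth. For (c), note that a $4$-small thickened homomorphism is in particular $2$-small, and every $\rho\in\A_I^\star$ is very good, hence good; thus \autoref{thm:h0}(ii) gives $H^0(\W_\rho^I,\Theta_{\W_\rho^I})=0$.

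For (b), I would exploit the quotient map $\pi:\A_I^\star\to\uA_I^\star$ and the identification $\pi^\star\underline{\W}^I\simeq\W^I$ coming from \autoref{thm: quotients}. Fix $\rho\in V$ projecting to $[\rho]\in\underline{V}$. Because the Kodaira--Spencer map is compatible with base change, one obtains a factorization $\KS_\rho^{\W^I}=\KS_{[\rho]}^{\underline{\W}^I}\circ d\pi_\rho$, where both sides are maps out of $T_\rho\A_I^\star\simeq Z^1(\Gamma,\g_\rho)$ (the tangent space identification being valid since very good points are smooth points of $\Hom(\Gamma,G)$). Since $\pi$ is a set-theoretic quotient by the free $G$-action and $\rho$ has trivial centralizer, $d\pi_\rho$ is the natural surjection $Z^1(\Gamma,\g_\rho)\twoheadrightarrow H^1(\Gamma,\g_\rho)\simeq T_{[\rho]}\uA_I^\star$, with kernel exactly $B^1(\Gamma,\g_\rho)$; this is \autoref{prop:tangent} transported to the smooth locus. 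On the other hand, \autoref{thm:completeness} asserts that $\KS_\rho^{\W^I}$ is surjective with kernel precisely $B^1(\Gamma,\g_\rho)$. A short diagram chase then yields that $\KS_{[\rho]}^{\underline{\W}^I}$ is surjective (since $\KS_\rho^{\W^I}$ and $d\pi_\rho$ are) and injective (any class it kills lifts to an element of $\ker\KS_\rho^{\W^I}=B^1=\ker d\pi_\rho$), hence an isomorphism. With (a), (b), (c) verified for every $[\rho]\in\underline{V}$, \autoref{prop:universality-criterion} finishes the proof.

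The step I expect to be the main obstacle is making the compatibility $\KS_\rho^{\W^I}=\KS_{[\rho]}^{\underline{\W}^I}\circ d\pi_\rho$ and the identification of $d\pi_\rho$ fully precise: one must check that the connecting-homomorphism construction of the Kodaira--Spencer map behaves functorially under base change along $\pi$, and then match the resulting differential with the purely group-cohomological surjection $Z^1(\Gamma,\g_\rho)\to H^1(\Gamma,\g_\rho)$ furnished by \autoref{prop:tangent}. Everything else is a formal consequence of \autoref{thm:completeness}, \autoref{thm:h0}, and \autoref{prop:universality-criterion}.
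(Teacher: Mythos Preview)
Your proposal is correct and follows essentially the same strategy as the paper: verify the hypotheses of \autoref{prop:universality-criterion} at each point by combining \autoref{thm:completeness} and \autoref{thm:h0} with a base-change factorization of the Kodaira--Spencer map. The only minor difference is that the paper applies functoriality to a local holomorphic \emph{section} $\sigma:\underline{V}\to\A_I^\star$ (yielding $\KS_{[\rho]}=\KS_\rho\circ d_{[\rho]}\sigma$), whereas you apply it to the projection $\pi$ (yielding $\KS_\rho=\KS_{[\rho]}\circ d\pi_\rho$); these are dual versions of the same argument and lead to the same conclusion by the same diagram chase.
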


\begin{proof}
Recall that the Anosov family over $\underline{V}$ is locally obtained by the pullback of $\W^{I}$ by a holomorphic section $\sigma : \underline{V} \to \A^{\star}_I$.
Let $[\rho] \in \underline{V}$ and suppose $\sigma([\rho]) = \rho$.
Let $\KS_{[\rho]} : T_{[\rho]} \underline{V} \to H^1(\W_\rho,\Theta_{\W_\rho})$ be the Kodaira-Spencer map of the family $\underline{\W}^{I}$ over $\underline{V}$.
Functoriality of the Kodaira-Spencer map with respect to pullback of families then gives a factorization of $\KS_{[\rho]}$:
\[
\begin{tikzcd}
T_{[\rho]} \underline{V} \ar[rd,"d_{[\rho]} \sigma"] \ar[rr, "\KS_{[\rho]}"] & & H^1(\W_\rho^{I},\Theta_{\W_\rho^{I}})\\
& T_{\rho} V \ar[ru,"\KS_\rho"]
\end{tikzcd}
\]
By the description of tangent spaces from \autoref{prop:tangent},  This diagram is isomorphic to the diagram
\begin{center}
\begin{tikzcd}
H^{1}(\Gamma, \g_{\rho}) \ar[rd,"d_{[\rho]} \sigma"] \ar[rr, "\KS_{[\rho]}"] & & H^1(\W_\rho^{I},\Theta_{\W_\rho^{I}})\\
& Z^{1}(\Gamma, \g_{\rho}) \ar[ru,"\KS_\rho"]
\end{tikzcd}
\end{center}
Since $(\rho, I)$ is $4$-small, \autoref{thm:completeness} implies that $\KS_{\rho}$ is surjective, therefore $\KS_{[\rho]}$ is surjective.

Since $\sigma$ is a section, $d_{[\rho]}\sigma: H^{1}(\Gamma, \g_{\rho})\rightarrow Z^{1}(\Gamma, \g_{\rho})$ is injective and split by the natural surjection $Z^{1}(\Gamma, \g_{\rho})\rightarrow H^{1}(\Gamma, \g_{\rho}).$ Hence, the commutativity of the above triangle implies that $\KS_{[\rho]}$ is injective.  Hence, $\KS_{[\rho]}$ is an isomorphism and the family $\underline{\W}^{I}$ over $\underline{V}$ is complete and effective at every point of $\underline{V}.$  

Moreover, since every $\rho\in V$ has trivial centralizer, $H^{0}(\W_{\rho}^{I}, \Theta_{\W_{\rho}^{I}})\simeq H^{0}(\Gamma, \g_{\rho})=\{0\}.$
Hence we can apply \autoref{prop:universality-criterion} at each point to conclude that the family $\underline{\W}^{I}$ over $\underline{V}$ is universal.
\end{proof}

We now state a general result for closed surface groups.
\begin{cor}
\label{cor:surface-completeness}
Suppose $\Gamma = \pi_1(S)$ is a surface group.  Suppose $G$ is a complex simple Lie group not of type $A_1$, $A_2$, $A_3$, or $B_2$. Let $I$ be a balanced ideal of type $(P_{A}, \F)$ such that that $G$-quasi-Fuchsian and $G$-Hitchin representations give rise to nonempty domains $\Omega_\rho^I$.

Then there exists a connected non-empty open $G$-invariant set $\mathcal{U}\subset \A_{I}^{\star}$ satisfying the following:
\begin{itemize}
\item  The open set $\mathcal{U}$ contains the set of $G$-quasi-Fuchsian homomorphisms.  If $G$ is not of type $F_{4}, E_{6}, E_{7}$ or $E_{8},$ then $\mathcal{U}$ also contains the set of $G$-Hitchin homomorphisms.
\item  Let $\underline{\mathcal{U}}\subset \uA_{I}^{\star}$ denote the corresponding quotient.  Then the restriction of the family $\underline{\W}^{I}$ to $\underline{\mathcal{U}}$ is universal.
\end{itemize}
\end{cor}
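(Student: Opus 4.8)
The plan is to produce $\mathcal{U}$ by cutting the neighborhood supplied by \autoref{thm: PSW} down to the very good locus, and then to invoke \autoref{cor:universality}. First I would apply \autoref{thm: PSW}(iii) to obtain a connected, non-empty, open, $G$-invariant set $\mathcal{U}_{0}\subset\A\subset\Hom(\Gamma, G)$ containing all $G$-quasi-Fuchsian homomorphisms (and, when $G$ is not of type $F_{4},E_{6},E_{7},E_{8}$, all $G$-Hitchin homomorphisms as well) along which $\rho\mapsto\hdim(\Xi_{\rho})$ is continuous and $<2$. Since $G$ is assumed not to be of type $A_{1},A_{2},A_{3}$ or $B_{2}$, parts (iv) and (v) of \autoref{thm: PSW} then show that $(\rho,I)$ is $4$-small for every $\rho\in\mathcal{U}_{0}$ and every balanced ideal $I$ of type $(P_{A},\F)$; in particular $\Omega_{\rho}^{I}\neq\emptyset$, so $\mathcal{U}_{0}\subset\A_{I}$.

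Next I would check that $G$-quasi-Fuchsian and $G$-Hitchin homomorphisms are \emph{very good}, i.e.\ lie in the complex manifold $\Hom^{\star}(\Gamma, G)$. Triviality of the centralizer $Z_{G}(\rho(\Gamma))$ is \autoref{prop:GQF}(iv) in the quasi-Fuchsian case and the freeness statement of \autoref{prop:GHIT} in the Hitchin case; smoothness of the corresponding points of $\Hom(\Gamma, G)$ is \autoref{prop:GQF}(ii) and \autoref{prop:GHIT}. The remaining point is irreducibility: for $\rho$ $G$-quasi-Fuchsian the proof of \autoref{prop:GQF} identifies the Zariski closure of $\rho(\Gamma)$ with a $G$-conjugate of the principal subgroup $\iota_{G}(\PSL(2,\C))$, which is contained in no proper parabolic of $G$ (a standard property of principal subgroups, cf.\ \cite{KOS59}), while for $\rho$ $G$-Hitchin irreducibility is classical (see \cite{LAB06}\cite{FG06}). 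Hence $\mathcal{QF}_{S}(G)$ and $\mathcal{H}_{S}(G)$ are contained in $\Hom^{\star}(\Gamma, G)$, as is the non-empty set $\mathcal{F}_{S}(G)$ of $G$-Fuchsian homomorphisms, which lies in $\mathcal{QF}_{S}(G)\cap\mathcal{H}_{S}(G)$.

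I would then take $\mathcal{U}$ to be the connected component of the open $G$-invariant set $\mathcal{U}_{0}\cap\Hom^{\star}(\Gamma, G)$ that contains $\mathcal{QF}_{S}(G)$; this is well defined because $\mathcal{QF}_{S}(G)$ is connected (\autoref{prop:GQF}(i)) and, by the two previous steps, contained in $\mathcal{U}_{0}\cap\Hom^{\star}(\Gamma, G)$. Since $G$ is connected, it preserves each connected component of a $G$-invariant set, so $\mathcal{U}$ is $G$-invariant; and since $\mathcal{H}_{S}(G)$ is connected (\autoref{prop:GHIT}) and meets $\mathcal{U}$ along $\mathcal{F}_{S}(G)$, in the non-exceptional case $\mathcal{H}_{S}(G)\subset\mathcal{U}$ too. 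By construction $\mathcal{U}\subset\mathcal{U}_{0}\cap\Hom^{\star}(\Gamma, G)\subset\A_{I}\cap\Hom^{\star}(\Gamma, G)=\A_{I}^{\star}$, and every $(\rho, I)$ with $\rho\in\mathcal{U}$ is $4$-small. Applying \autoref{cor:universality} with $V=\mathcal{U}$ then shows the restriction of $\underline{\W}^{I}$ to $\underline{\mathcal{U}}$ is universal, which together with the inclusions already established proves both bullet points.

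I expect the main obstacle to be the third step of the verification above, namely that $G$-quasi-Fuchsian, $G$-Fuchsian and $G$-Hitchin homomorphisms all land in the very good locus $\Hom^{\star}(\Gamma, G)$ — in particular their irreducibility — since this is precisely what forces $\mathcal{U}\subset\A_{I}^{\star}$ and hence makes the smoothness hypothesis underlying \autoref{cor:universality} (inherited from \autoref{prop:universality-criterion} via \autoref{thm:completeness}) available. The remainder is bookkeeping about connectedness and $G$-invariance of components.
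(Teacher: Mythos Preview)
Your proposal is correct and follows essentially the same route as the paper: start from the neighborhood $\mathcal{U}_{0}$ supplied by \autoref{thm: PSW}, intersect with the very good locus, and apply \autoref{cor:universality}. Your treatment is in fact more careful than the paper's on two points the paper glosses over --- you explicitly verify irreducibility of $G$-quasi-Fuchsian and $G$-Hitchin homomorphisms (needed for membership in $\Hom^{\star}(\Gamma,G)$, not just smoothness and trivial centralizer), and you pass to a connected component to secure the connectedness claimed in the statement --- but the underlying argument is the same.
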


Thus we conclude that for simple $G$ of sufficiently high rank, any homomorphism sufficiently close to a $G$-quasi-Fuchsian or $G$-Hitchin homomorphism has a neighborhood in the character variety over which the Anosov family is universal.

\begin{proof}
Let $\mathcal{U}_0\subset \A$ be the open $G$-invariant subset provided by \autoref{thm: PSW}.  Then for all $\rho\in \mathcal{U}_0$ and any balanced ideal $I$ of type $(P_{A}, \F),$ the thickened Anosov homomorphism $(\rho, I)$ is $4$-small.

Let $\mathcal{U} = \mathcal{U}_0 \cap \A_I^\star$, which is again a $G$-invariant open set.
It still contains the relevant class of homomorphisms ($G$-quasi-Fuchsian or $G$-Hitchin), since by
\autoref{prop:GQF} and \autoref{prop:GHIT} these types of homomorphisms are smooth points, and we assumed that they give nonempty domains of discontinuity.  Therefore, we may apply \autoref{cor:universality} to conclude that the restriction of $\underline{\W}^{I}$ to $\underline{\mathcal{U}}$ is universal.  This completes the proof.
\end{proof}

\section{Teichm\"uller space}
\label{sec:teich}

We refer to the work of Catanese \cite{CAT11} and Meersseman \cite{MEE19} for details on Teichm\"{u}ller spaces of higher-dimensional manifolds.

Let $M$ be an oriented smooth closed manifold of even dimension.
Let $\mathcal{C}(M)$ denote the space of all smooth, integrable almost complex structures on $M$ compatible with its orientation.  The space $\mathcal{C}(M)$ is a subset of the linear Fr\'{e}chet space of sections $A^{0}(M, \textnormal{End}(TM)),$ and therefore inherits a topology.  The Fr\'{e}chet Lie group $\textnormal{Diff}_{0}(M)$ of diffeomorphisms isotopic to the identity acts continuously on $\mathcal{C}(M)$ by pullback.
\begin{defn}
The Teichm\"{u}ller space of $M$ is the topological space
\begin{align}
\mathcal{T}_{M}:=\mathcal{C}(M)/\textnormal{Diff}_{0}(M)
\end{align}
equipped with the quotient topology.
\end{defn}
Note that if $M$ is itself a complex manifold, then the associated almost complex structure gives $\T(M)$ a natural base point, which we denote by $[M]$. 

\begin{remark}
In contrast to the case of Riemann surfaces, in general the Teichm\"{u}ller space of a higher-dimensional manifold can be very pathological.  In particular, it may not be locally Hausdorff, and therefore may not admit a local complex analytic space structure.
Below, we will give some interesting examples of open subsets of $\mathcal{T}(M)$ which admit the structure of a complex manifold.  The question of how to view $\mathcal{T}(M)$ as a global complex analytic object was recently answered by Meersseman \cite{MEE19}: it is a certain limit of complex analytic Artin stacks.  As a word of warning, in contrast to the algebraic setting, there is not a uniformly accepted definition of complex analytic Artin stack, and the reader should consult \cite{MEE19} for the precise meaning in this setting.
\end{remark}

Let $p : \Y \to B$ be a family of complex manifolds with $\Y$ and $B$ smooth and connected which is locally trivial in the $C^{\infty}$-topology.  Then, given any $b\in B,$ the family $p: \Y\rightarrow B$ admits the structure of a locally trivial fiber bundle (in the $C^{\infty}$-category) with typical fiber $\Y_{b}$ and structure group $\textnormal{Diff}(\Y_{b}).$  

A \emph{marking} $\mu$ of $p:\Y \to B$ is a reduction of structure group to $\Diff_0(\Y_{b})$.
In other words, a marking is given by a compatible atlas of local trivializations $p^{-1}(U_\alpha) \simeq U_\alpha \times \Y_{b}$ whose transition maps take values in $\Diff_0(\Y_{b})$.
In particular, a marking yields a preferred isotopy class of diffeomorphism
between $\Y_{b'}$ and $\Y_{b}$ for all $b'\in B.$  

Consider the exact sequence
\begin{align}
1\rightarrow \Diff_0(\Y_{b})\rightarrow \Diff(\Y_{b})\rightarrow \Mod(\Y_{b})\rightarrow 1
\end{align}
where $\Mod(\Y_{b})$ is the mapping class group of isotopy classes of diffeomorphisms.  There exists a monodromy homomorphism $\mu_{p}: \pi_{1}(B,b)\rightarrow \Mod(\Y_{b}).$  
The existence of a marking for the family $p: \Y\rightarrow B$ is equivalent to the monodromy homomorphism $\mu_{p}$ being trivial.  In particular, if a marking exists, it is unique, and a family over a $1$-connected base always admits a unique marking.

Let $(\Y,B,\mu)$ be a marked family and $b_0 \in B$ a base point.
There is an associated continuous \emph{classifying map} $f : B \to \T(\Y_{b_0})$ defined by the condition that the pullback of the complex structure of $\Y_b$ to $\Y_{b_0}$ by any diffeomorphism in the preferred isotopy class represents $f(b)$.
In particular such a classifying map is obtained for any smoothly locally trivial family $\Y \to B$ over a $1$-connected base (by the existence and uniqueness of marking noted above).  The following result can be found in \cite{CAT11}.

\begin{prop}
\label{prop:local-surjectivity}
Suppose $(\Y, B, p)$ is a universal complex analytic family such that $\Y$ and $B$ are smooth complex manifolds.  Let $U\subset B$ be a $1$-connected open subset and $b\in U.$ Then, the classifying map $f : U \to \T(\Y_{b})$ is a locally surjective open map.  In particular, $f$ is a homeomorphism onto its image if and only if $f$ is injective.
\noproof
\end{prop}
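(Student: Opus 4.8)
The plan is to reduce, near every point of $U$, the classifying map to the Kuranishi construction for the corresponding fiber, and then to invoke Kuranishi's local slice theorem inside the space of complex structures; openness and the final equivalence will then follow formally.

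First I would fix $b' \in U$ and set $X' := \Y_{b'}$. Since $\Y|_U \to U$ is a proper holomorphic submersion it is $C^\infty$-locally trivial by Ehresmann's theorem, and since $U$ is $1$-connected it carries the unique marking defining $f$; through this marking $X'$ is identified with the smooth manifold $M := \Y_b$ equipped with a complex structure $J'$, and $[J'] = f(b')$. Because $(\Y, B, p)$ is universal and $B$ is smooth, I would next argue (exactly as in the proof of \autoref{prop:universality-criterion}, comparing $\Y$ near $b'$ with the Kuranishi family of $X'$ provided by \autoref{kur family}) that the germ $(B, b')$ is biholomorphic to the germ $(\mathcal{B}, 0)$ of the Kuranishi base of $X'$; smoothness of $B$ then forces the Kuranishi obstruction map to vanish, so $\mathcal{B}$ is an open neighborhood of $0$ in $H^1(X', \Theta_{X'})$. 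After choosing a Hermitian metric on $X'$, Kuranishi's construction gives a holomorphic embedding $\mathcal{B} \hookrightarrow \mathcal{C}(M)$, $t \mapsto J(t)$, with $J(0) = J'$, and under the identification $(B,b') \cong (\mathcal{B},0)$ the classifying map $f$ near $b'$ becomes the composite $\mathcal{B} \hookrightarrow \mathcal{C}(M) \to \T(M)$ of this embedding with the quotient projection.

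The analytic heart of the argument is Kuranishi's theorem in the form of a local slice statement: there is a neighborhood $W$ of $J'$ in $\mathcal{C}(M)$ such that every $J \in W$ equals $\varphi^* J(t)$ for some $t \in \mathcal{B}$ near $0$ and some diffeomorphism $\varphi$ of $M$ close to the identity; such a $\varphi$ lies in $\Diff_0(M)$, so $[J] = [J(t)]$ in $\T(M)$. Thus the image of the Kuranishi slice meets every $\Diff_0$-orbit through $W$, and since $\mathcal{C}(M) \to \T(M)$ is an open map, the composite $\mathcal{B} \to \T(M)$ surjects onto a neighborhood of $[J']$. Translating back through the previous paragraph, for any neighborhood $V$ of $b'$ inside $U$ the image $f(V)$ contains a neighborhood of $f(b')$ in $\T(\Y_b)$; as $b' \in U$ was arbitrary, $f$ is locally surjective. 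Openness then follows at once: for an open $V \subset U$, the set $f(V)$ contains a neighborhood of each of its points $f(b')$ (for $b' \in V$) and is therefore open. Finally, $f$ is continuous and open, so it is a homeomorphism onto its image $f(U)$ precisely when it is injective (a continuous open bijection is a homeomorphism, and the converse is trivial), which is the last assertion.

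The one genuinely nontrivial step is the local slice statement — that every nearby complex structure on $M$ is pulled back into the finite-dimensional Kuranishi slice by a diffeomorphism \emph{isotopic to the identity}. This is the surjectivity half of Kuranishi's semi-universality theorem, established by an implicit-function-theorem argument for the $\dbar$-operator together with elliptic estimates, and I would simply cite it (e.g. from \cite{CAT11} or a standard reference on deformation theory). Everything else is bookkeeping: the germ identification $(B,b') \cong (\mathcal{B},0)$ is a reprise of \autoref{prop:universality-criterion}, and the passage from local surjectivity to openness and then to the homeomorphism criterion is elementary point-set topology.
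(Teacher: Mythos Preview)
The paper does not supply its own proof of this proposition; it is stated with a citation to Catanese \cite{CAT11} and marked with \verb|\noproof|. Your outline is essentially the standard argument one finds in that reference: identify the family near $b'$ with the Kuranishi family, invoke Kuranishi's local slice theorem to see that the Kuranishi classifying map is locally surjective onto $\T(M)$, and then deduce openness of $f$ and the injectivity criterion by elementary topology.

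One small correction: the germ identification $(B,b') \cong (\mathcal{B},0)$ you claim does not quite follow from universality of $\Y$ alone. Universality of $\Y$ gives a unique $G:(\mathcal{B},0)\to(B,b')$ with $G^*\Y\simeq\text{Kur}$, and versality of the Kuranishi family gives some $F:(B,b')\to(\mathcal{B},0)$ with $F^*\text{Kur}\simeq\Y$; uniqueness then forces $G\circ F=\id$, but since the Kuranishi family is only versal (absent the hypothesis $H^0(\Y_{b'},\Theta_{\Y_{b'}})=0$) you cannot conclude $F\circ G=\id$. Fortunately this does not matter for local surjectivity: the map $G$ alone suffices, since $f\circ G$ is the Kuranishi classifying map, whose image already contains a neighborhood of $f(b')$ by the slice theorem. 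With that adjustment your argument goes through.
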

By \autoref{prop:local-surjectivity}, if $f$ is injective, then the open subset $f(U)\subset \T(\Y_{b})$ canonically admits the structure of a complex manifold with a (global) chart provided by $f.$

\section{Small limit sets and Teichm\"uller space}
\label{sec:small-teich}

We now apply the generalities of \autoref{sec:teich} to the setting of Anosov families.

\begin{thm}
\label{thm:uniformization}
Let $I$ be a balanced ideal of type $(P_{A}, \F)$ and let $\rho \in \A^\star_I$.
Suppose that $V\subset \A^{\star}_I$ is an open
$G$-invariant set containing $\rho$ such that $(\eta, I)$ is $4$-small for
all $\eta \in V.$  Let $\underline{V}\subset \uA^{\star}_I$ be the corresponding quotient.  

Then, if $[\rho] \in \underline{U}\subset \underline{V}$ is any $1$-connected open subset, there exists a continuous map
\begin{align}
f: \underline{U}\rightarrow \T(\W_{\rho}^{I})
\end{align}
which is open, locally surjective, and a homeomorphism onto its image.
\end{thm}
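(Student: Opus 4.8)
The plan is to realize $f$ as the classifying map of the universal Anosov family furnished by \autoref{cor:universality}, read off openness and local surjectivity from the general statement \autoref{prop:local-surjectivity}, and then establish injectivity of $f$ — which by that same proposition is equivalent to the homeomorphism-onto-image property — using Harvey's extension theorem \autoref{thm:harvey}. Concretely, I would first restrict the universal family $\underline{\W}^{I}\to\underline{V}$ of \autoref{cor:universality} to the open subset $\underline{U}$; restriction preserves universality, the base $\underline{U}$ is a $1$-connected complex manifold, and the total space is a complex manifold (being smooth and proper over a manifold). Since $p:\W^{I}\to\A_{I}$ is $C^{\infty}$-locally trivial and $G$ acts freely and properly, $\underline{\W}^{I}\to\underline{V}$ is $C^{\infty}$-locally trivial, so over the $1$-connected base $\underline{U}$ it carries a unique marking and hence a continuous classifying map $f:\underline{U}\to\T(\W_{\rho}^{I})$ with $f([\rho])=[\W_{\rho}^{I}]$. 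Then \autoref{prop:local-surjectivity} applies directly: $f$ is open and locally surjective, and it is a homeomorphism onto its image if and only if it is injective.

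It thus remains to prove injectivity of $f$. Suppose $f([\eta_{1}])=f([\eta_{2}])$ for $[\eta_{1}],[\eta_{2}]\in\underline{U}$, and fix representatives $\eta_{1},\eta_{2}\in V$. By the defining property of the classifying map there is a biholomorphism $\phi:\W_{\eta_{1}}^{I}\to\W_{\eta_{2}}^{I}$ lying in the preferred isotopy class determined by the marking. Since the regular $\Gamma$-covers $\Omega_{\eta_{i}}^{I}\to\W_{\eta_{i}}^{I}$ fit together compatibly with the local trivializations defining the marking, $\phi$ lifts to a biholomorphism $\tilde\phi:\Omega_{\eta_{1}}^{I}\to\Omega_{\eta_{2}}^{I}$ equivariant for the identity automorphism of $\Gamma$, i.e.\ $\tilde\phi\circ\eta_{1}(\gamma)=\eta_{2}(\gamma)\circ\tilde\phi$ on $\Omega_{\eta_{1}}^{I}$ for all $\gamma$.

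Next I would use smallness to upgrade $\tilde\phi$ to a global symmetry of $\F$. As $(\eta_{i},I)$ is $4$-small we have $\mathcal{H}_{2N-2}(\Lambda_{\eta_{i}}^{I})=0$, so \autoref{thm:harvey} with $m=0$ applies on $\F$; applying it to the sections of a very ample line bundle pulled back under $\tilde\phi$ via a projective embedding of $\F$ shows that $\tilde\phi$ and $\tilde\phi^{-1}$ extend to holomorphic self-maps of $\F$ that are mutually inverse on the dense open set $\Omega_{\eta_{2}}^{I}$, hence everywhere, so $\tilde\phi$ extends to some $\bar\phi\in\Aut(\F)$. Connectedness of $\underline{U}$ forces $\bar\phi$ to be isotopic to $\id_{\F}$, hence to lie in the identity component $G=\Aut_{0}(\F)$. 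Taking the dense limit in the equivariance relation gives $\bar\phi\,\eta_{1}(\gamma)\,\bar\phi^{-1}=\eta_{2}(\gamma)$ in $G$ for all $\gamma$, i.e.\ $\eta_{2}=\bar\phi\cdot\eta_{1}\cdot\bar\phi^{-1}$ with $\bar\phi\in G$. Since $\eta_{1},\eta_{2}$ are very good, the character map restricts to a set-theoretic quotient on the very good locus, so $[\eta_{1}]=[\eta_{2}]$; hence $f$ is injective, which completes the proof.

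The main obstacle is the injectivity step, and within it the two delicate points are: (a) verifying that the marking of $\W^{I}$ is compatible with the $\Gamma$-cover $\Omega^{I}\to\W^{I}$ over $\underline{U}$, so that the lift $\tilde\phi$ is equivariant for the \emph{identity} of $\Gamma$ and not merely for some nontrivial automorphism; and (b) confirming that the Hartogs-type extension $\bar\phi$ genuinely lies in $\Aut_{0}(\F)$ rather than in an outer component. Both are expected to follow from the connectedness of $\underline{U}$ together with the explicit realization of $\Omega^{I}$ as an open subset of $\F\times\A_{I}$ on which $\Gamma$ acts by $\gamma\cdot(x,\rho)=(\rho(\gamma)x,\rho)$, but they require care — and they are exactly where the $1$-connectedness hypothesis on $\underline{U}$ does work beyond simply guaranteeing the existence of a marking.
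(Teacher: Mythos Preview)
Your overall architecture matches the paper's exactly: construct the classifying map from the unique marking on the $1$-connected base, invoke \autoref{cor:universality} for universality, get openness and local surjectivity from \autoref{prop:local-surjectivity}, and reduce everything to injectivity by lifting a biholomorphism $\W_{\eta_1}^I\to\W_{\eta_2}^I$ to $\tilde\phi:\Omega_{\eta_1}^I\to\Omega_{\eta_2}^I$ and then extending to an element of $G$. The one substantive divergence is in how you carry out that extension, and there your argument has a genuine gap.

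Harvey's theorem applied to the pullback $\tilde\phi^\ast\mathcal{L}$ of a very ample line bundle does extend the bundle and its sections across $\Lambda_{\eta_1}^I$, but the resulting map $[\bar s_0:\cdots:\bar s_N]$ into $\mathbb{P}^N$ is only \emph{meromorphic}: nothing you have said rules out a common zero locus of the extended sections inside $\Lambda_{\eta_1}^I$. That base locus is precisely the indeterminacy set, and Hartogs-type extension for holomorphic maps into a projective target fails in general (e.g.\ $(z,w)\mapsto[z:w]$ on $\C^2\setminus\{0\}$). Running the same argument for $\tilde\phi^{-1}$ and observing that the two meromorphic extensions are mutual inverses on a dense open set does not by itself resolve the indeterminacy---birational self-maps of smooth projective varieties can and do have indeterminacy loci (Cremona transformations of $\mathbb{P}^n$, say).

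The paper handles exactly this point by isolating it as a separate \autoref{prop:flag-extension}. There the route is: Chirka's theorem gives a strongly meromorphic extension, holomorphic off a closed analytic set $B$ of complex codimension at least $2$; Shiffman's theorem (applied to $1/\det d\tilde h$) shows the extension remains a local biholomorphism on $\F\setminus B$; and an Ivashkovich-type lemma, using the homogeneity of $\F$, then removes the remaining indeterminacy along $B$. Your Harvey-based shortcut gets you to roughly the same intermediate stage as Chirka's step, but you still need the Shiffman and Ivashkovich ingredients (or some substitute) to finish. Your concerns (a) and (b) are real but secondary; the paper treats (b) much as you anticipate, by appealing directly to the Teichm\"uller-space meaning of $f([\eta_1])=f([\eta_2])$ to place the biholomorphism in the preferred isotopy class.
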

\textbf{Remark:} When $G=\PSL(2, \C)$ and $\Gamma$ is the fundamental group of a closed, oriented surface $S$ of genus at least two, then $\uA^{\star}$ is equal to the quasi-Fuchsian space $\underline{\mathcal{QF}}_{S}.$ Moreover, there is a unique balanced ideal $I$ and $\W_{\rho}^{I}$ is smoothly diffeomorphic to $S\sqcup \overline{S}$ for any quasi-Fuchsian homomorphism $\rho: \Gamma\rightarrow \PSL(2, \C).$ The Bers Simultaneous Uniformization Theorem \cite{BER60} states that there is a biholomorphism $\underline{\mathcal{QF}}_{S}\simeq \mathcal{T}(S)\times \mathcal{T}(\overline{S}).$ 
In this vein, \autoref{thm:uniformization} should be viewed as a local uniformization theorem for Anosov homomorphisms.  

\begin{proof}[Proof of \autoref{thm:uniformization}]
Let $\underline{\W}^{I}$ denote the Anosov family over $\underline{U}.$  
Since $\underline{U}$ is $1$-connected, the family $\underline{\W}^{I}$ over $\underline{U}$ has a canonical marking and we obtain a continuous map
\begin{align}
f: \underline{U}\rightarrow \T(\W_{\rho}^{I}).
\end{align}
By \autoref{cor:universality}, the family $\underline{\W}^{I}$ over $\underline{U}$ is universal.  Therefore, by \autoref{prop:local-surjectivity}, the map $f: \underline{U}\rightarrow \T(\W_{\rho}^{I})$ is continuous, open and locally surjective.

To complete the proof, we need to show that $f$ is injective.
This will be achieved using the following result from complex-analytic extension theory:
\begin{prop}
\label{prop:flag-extension}
Suppose $h : (\F - E) \to \F$ is a locally biholomorphic map, where $E \subset \F$ has $\mathcal{H}_{2N-2}(E)=0$ for $N = \dim_\C \F$.
Then $h$ extends to a biholomorphism $\F \to \F$, and hence is the restriction of a unique element of $\textnormal{Aut}(\F).$  
\end{prop}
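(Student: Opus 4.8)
The plan is to invoke Hartogs-type extension across a set of small Hausdorff measure to first extend $h$ to a holomorphic map $\F \to \F$, and then to argue that this extension is an automorphism by a degree/openness argument together with a second application of extension theory to the inverse. First I would fix a projective embedding $\F \hookrightarrow \P^M$ and use the resulting local coordinates; the components of $h$ are then (locally) holomorphic functions on $\F - E$. The condition $\mathcal{H}_{2N-2}(E) = 0$ says precisely that $E$ has Hausdorff codimension greater than $2$, and by the Shiffman–Harvey removable singularity theorem (the same circle of ideas underlying \autoref{thm:harvey}, or directly \cite{HAR74}) a bounded — indeed any — holomorphic function on $\F - E$ extends holomorphically across $E$, since such $E$ has zero $(2N-2)$-dimensional measure and in particular is removable for holomorphic functions (it cannot support the polar set of a meromorphic function, and is too thin to obstruct $L^2$ extension). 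Composing with the embedding, $h$ extends to a holomorphic map $\bar h : \F \to \P^M$; since $\F$ is compact and $\bar h(\F - E)$ lies in $\F$, continuity forces $\bar h(\F) \subset \F$, so we obtain a holomorphic self-map $\bar h : \F \to \F$ agreeing with $h$ off $E$.

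Next I would show $\bar h$ is a biholomorphism. Since $h$ is locally biholomorphic on the dense open set $\F - E$, the Jacobian of $\bar h$ is nonvanishing on $\F - E$; its vanishing locus is therefore an analytic subset contained in $E$, hence of codimension $\geq 2$, but the critical locus of a nonconstant holomorphic self-map of a connected compact complex manifold is either empty or a hypersurface (codimension exactly $1$), so it must be empty and $\bar h$ is everywhere a local biholomorphism. A local biholomorphism between compact connected complex manifolds is a finite covering map; as $\F$ is simply connected (it is a flag variety, hence rational and simply connected), $\bar h$ must be a covering of degree one, i.e. a biholomorphism. Then $\bar h \in \Aut(\F) = \Aut_0(\F) = G$ (using connectedness of $\Aut(\F)$ for flag varieties, as recalled in \autoref{sec: flag}, or simply that $\Aut(\F)$ acts and $\bar h$ lands in the identity component since it is homotopic to a point via the rational connectedness — in any case $\bar h$ is an element of the algebraic group $G$). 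Uniqueness is immediate: two elements of $\Aut(\F)$ agreeing on the dense set $\F - E$ agree everywhere.

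The main obstacle — really the only substantive point — is the removable singularity input: one must be careful that $E$ is merely assumed to be a closed set with $\mathcal{H}_{2N-2}(E) = 0$, not an analytic subvariety, so the classical Riemann extension theorem for thin analytic sets does not apply directly. The correct tool is Shiffman's theorem (or Harvey's \cite{HAR74}, cf.\ \autoref{thm:harvey} with $m=0$ applied to the structure sheaf, giving $H^0(\F, \O_\F) \simeq H^0(\F - E, \O_\F)$ and more relevantly the extension statement for the locally free sheaf $\O_\F$): a holomorphic function on the complement of a closed set of vanishing $(2N-2)$-Hausdorff measure extends holomorphically. One subtlety to handle with a sentence of care is that $h$ is a priori only defined and holomorphic, not assumed bounded; but boundedness is automatic here because its target $\F$ is compact, so the image lies in a bounded region of each affine chart of the ambient projective space, and one applies the extension theorem chart by chart after noting that the union of the coordinate hyperplanes' preimages is again negligible. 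Once extension is in hand, everything else is the standard "local biholomorphism of compact simply connected complex manifolds is a biholomorphism" argument, which I would present in a few lines.
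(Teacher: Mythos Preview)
There is a genuine gap in your extension step. Your claim that ``the union of the coordinate hyperplanes' preimages is again negligible'' is false: since $h$ is a local biholomorphism, the preimage under $h$ of a hyperplane section of $\F$ is a \emph{hypersurface} in $\F - E$, which certainly has positive $\mathcal{H}_{2N-2}$-measure. Consequently, near a point $p\in E$ there is no reason for $h$ to take values in a single affine chart of $\P^M$, and Shiffman's theorem for bounded holomorphic functions does not apply to the coordinate components. The general phenomenon is that a holomorphic map into a \emph{compact} target across a set with $\mathcal{H}_{2N-2}=0$ extends only \emph{meromorphically}, not holomorphically. A clean illustration that your argument, as written, cannot be correct: the inclusion $\P^2 - \{p\} \hookrightarrow \widehat{\P^2}$ into the blow-up of $\P^2$ at $p$ is a locally biholomorphic map from the complement of a codimension-$2$ point into a compact complex manifold, yet it does \emph{not} extend holomorphically across $p$ (the cluster set is the whole exceptional $\P^1$). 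Nothing in your argument distinguishes $\F$ from $\widehat{\P^2}$.

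What the paper does instead is exactly to confront this difficulty. First, a theorem of Chirka gives a \emph{meromorphic} extension, i.e.\ a holomorphic $\tilde h:\F - B\to\F$ with $B\subset E$ a closed analytic set of complex codimension $\geq 2$. Then one shows $\tilde h$ is a local biholomorphism on all of $\F - B$ (here your Jacobian/hypersurface argument, or equivalently applying Shiffman's theorem to $1/\delta$, is the right idea). The crucial remaining step---extending the local biholomorphism across the analytic set $B$---genuinely requires the \emph{homogeneity} of the target $\F$: this is Ivashkovich's lemma, and it is precisely what fails for the blow-up example above. Once one has a holomorphic local biholomorphism $\F\to\F$, your final covering-space argument using simple connectivity of $\F$ is correct and is exactly how the paper concludes. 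In short: your steps after the extension are fine, but the extension itself is where the content lies, and it needs both the meromorphic extension theorem and the homogeneity of $\F$, neither of which you invoke.
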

Before giving a proof of the proposition, we derive injectivity of $f$ from it.
Suppose that $f([\rho]) = f([\rho'])$ for $[\rho],[\rho'] \in \underline{U}$.
Then the manifolds $\W_\rho^{I}$ and $\W_{\rho'}^{I}$ are biholomorphic, and a biholomorphism between them lifts to their universal covers to give $h : \Omega_{\rho}^{I} \to \Omega_{\rho'}^{I}$.
Since $(\rho, I)$ is $4$-small, this map satisfies the hypotheses of \autoref{prop:flag-extension}, and therefore $h$ is the restriction of an automorphism of $\F.$  Moreover, by the definition of Teichm\"{u}ller space, this automorphism is smoothly isotopic to the identity, and therefore $h$ is the restriction of an element $g\in G=\textnormal{Aut}_{0}(\F).$  
It follows that $\rho=g\cdot \rho^{\prime} \cdot g^{-1}.$
Thus $[\rho] = [\rho']$, as desired.

\begin{proof}[Proof of \autoref{prop:flag-extension}]
The following argument is inspired by the results in \cite{McKay2009}.

By a theorem of Chirka \cite{Chirka1996}, the holomorphic map $h: (\mathcal{F} - E) \to \mathcal{F}$ extends to a strongly meromorphic mapping $\mathcal{F} \to \mathcal{F}$.
Such a meromorphic map in particular gives a holomorphic extension to much of $\mathcal{F}$; precisely, there exists a holomorphic extension $\tilde{h} : (\mathcal{F} - B) \to \mathcal{F}$ where $B \subset E$ is a closed analytic set.
By the Hausdorff measure condition, $B$ has complex codimension at least two in $\F$.

We claim that this holomorphic extension $\tilde{h}$ is also a local biholomorphism, which of course is already known except at points of $E-B$.
Here we use use an argument inspired by \cite[Lemma 3.18]{McKay2009}:
Work in local coordinates about a neighborhood $U$ of a point in $E - B$, chosen so that $U \cap B = \emptyset$.
Let $\delta$ denote the Jacobian determinant of $d\tilde{h}$ relative to this coordinate system, which is a nonvanishing holomorphic function on $U - E$.
Then $1/\delta$ is holomorphic on $U - E$ and thus extends holomorphically to $U$ by Shiffman's theorem \cite[Lemma 3]{Shi68}.
Thus $\delta$ is nonzero throughout $U$, and $\tilde{h}$ is a local biholmorphism.

Finally, by a Lemma of Ivashkovich (proved e.g.~in \cite[Lemma 3.45]{McKay2009}), the homogeneity of $\mathcal{F}$ implies that the meromorphic local biholomorphism $\tilde{h}$ in fact extends to a local biholomorphism $\tilde{h}: \F \rightarrow \F.$
Since $\mathcal{F}$ is compact and simply connected, $\tilde{h}$ is an automorphism of $\F.$
\end{proof}

As noted earlier, this also completes the proof of \autoref{thm:uniformization}.
\end{proof}

As in the previous section, we can adapt this theorem to the case of surface groups; the following is a more detailed version of \autoref{thm:main-local} from the introduction:
\begin{cor}\label{cor: simun}
Let $G$ be a complex simple group not of type $A_{1}, A_{2}, A_{3}$ or $B_{2}$ and $I$ any balanced ideal of type $(P_{A}, \F).$  Let $S$ be a closed orientable surface and $\Gamma=\pi_{1}(S).$  Then there exists a connected open non-empty $G$-invariant set $\mathcal{U}\subset \A_{I}^{\star}$ such that
\begin{rmenumerate}
\item  The set $\mathcal{U}$ contains all $G$-quasi-Fuchsian homomorphisms.  If $G$ is not of type $F_{4}, E_{6}, E_{7}$ or $E_{8},$ then $\mathcal{U}$ also contains the set of $G$-Hitchin homomorphisms.
\item The corresponding quotient $\underline{\mathcal{U}}\subset \uA_{I}^{\star}$ is $1$-connected.  
\item For any $\rho\in \mathcal{U},$ the classifying map $f: \underline{\mathcal{U}}\rightarrow \T(\W_{\rho}^{I})$ is open, locally surjective, and a homeomorphism onto its image.
\item There is a commutative diagram
\begin{center}
\begin{tikzcd}
\underline{\mathcal{QF}}_{S} \arrow{r}{\simeq} \arrow{d}
&\T(S) \times \T(\overline{S}) \arrow{d} \\
\underline{\mathcal{U}} \arrow{r}{f}
& \T(\W_{\rho}^{I}).
\end{tikzcd}
\end{center}
where the top horizontal arrow is the simultaneous uniformization map.
\end{rmenumerate}
\end{cor}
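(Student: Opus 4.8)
The plan is to assemble \autoref{cor:surface-completeness}, \autoref{thm:uniformization}, and the Bers Simultaneous Uniformization Theorem; the only genuinely new ingredient is the construction of a set $\mathcal{U}$ whose quotient is $1$-connected. First I would observe that the nonemptiness hypothesis appearing in \autoref{cor:surface-completeness} — that the $G$-quasi-Fuchsian and (off the exceptional types) $G$-Hitchin homomorphisms have nonempty domains $\Omega_\rho^{I}$ — is automatic here, because $G$ is not of type $A_1, A_2, A_3, B_2$: \autoref{thm: PSW}(iv)--(v) then show that every such $\rho$ has $(\rho, I)$ $4$-small for every balanced ideal $I$, and $4$-smallness forces $\Omega_\rho^{I}\neq\emptyset$. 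Thus \autoref{cor:surface-completeness} supplies a connected, non-empty, open, $G$-invariant set $\mathcal{U}_1\subset\A_I^{\star}$ that contains all $G$-quasi-Fuchsian homomorphisms (and, away from the exceptional types, all $G$-Hitchin homomorphisms), that consists of $4$-small thickened homomorphisms, and over whose quotient the Anosov family is universal. I will take the desired $\mathcal{U}$ to be a suitably shrunk $G$-invariant open subset of $\mathcal{U}_1$.

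The main obstacle is to perform this shrinking so that the quotient $\underline{\mathcal{U}}$ is $1$-connected while still containing the distinguished loci. Here I would use that, inside the complex manifold $\uA_I^{\star}$, the submanifolds $\underline{\mathcal{QF}}_S(G)$ and (off the exceptional types) $\underline{\mathcal{H}}_S(G)$ are \emph{contractible} — the former by Bers' theorem via \autoref{prop:GQF}(iii), the latter by \autoref{prop:GHIT} — and that they meet, since both contain the $G$-Fuchsian characters. Choosing tubular neighborhoods $N$ and $N'$ of these two loci inside the quotient $\mathcal{U}_1/G$, each is the total space of a vector bundle over a contractible base, hence contractible and in particular $1$-connected; shrinking so that $N\cap N'$ is non-empty and connected, van Kampen gives $\pi_1(N\cup N')=1$. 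I then set $\underline{\mathcal{U}}:=N\cup N'$ (just $N$ when $G$ is of exceptional type) and $\mathcal{U}:=\pi^{-1}(\underline{\mathcal{U}})$ for $\pi:\A_I^{\star}\to\uA_I^{\star}$; this $\mathcal{U}$ is open, $G$-invariant, connected (as $G$ is connected), contains all the relevant homomorphisms, and is $4$-small throughout, so (i) and (ii) hold. The delicate points are that $\mathcal{QF}_S(G)$ and $\mathcal{H}_S(G)$ are embedded submanifolds — they are smooth and locally closed, by \autoref{prop:GQF} and \autoref{prop:GHIT} — hence admit tubular neighborhoods, and that the two tubes can be made to overlap connectedly, the overlap being controlled by the connected $G$-Fuchsian locus.

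Statement (iii) is then immediate from \autoref{thm:uniformization} applied with $V=\mathcal{U}$ and with $\underline{U}=\underline{\mathcal{U}}$, which is a $1$-connected open subset of itself: this yields the classifying map $f:\underline{\mathcal{U}}\to\T(\W_\rho^{I})$ — well defined for any $\rho\in\mathcal{U}$ since all fibers over the connected base are diffeomorphic — and it is open, locally surjective, and a homeomorphism onto its image.

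Finally, for (iv) I would construct the square directly. The inclusion $\mathcal{QF}_S(G)\hookrightarrow\mathcal{U}$ is a holomorphic map of analytic spaces, since $\mathcal{QF}_S(G)\subset\A$ is a smooth locally closed analytic subspace (\autoref{prop:GQF}(ii)) that is contained in $\mathcal{U}$; it descends to a holomorphic inclusion $\underline{\mathcal{QF}}_S(G)\hookrightarrow\underline{\mathcal{U}}$, and precomposing with the biholomorphism $\underline{\mathcal{QF}}_S\simeq\underline{\mathcal{QF}}_S(G)$ from the proof of \autoref{prop:GQF} gives the left vertical arrow $\underline{\mathcal{QF}}_S\to\underline{\mathcal{U}}$; the top arrow is the Bers biholomorphism $\underline{\mathcal{QF}}_S\xrightarrow{\sim}\T(S)\times\T(\overline{S})$. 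Declaring the right vertical arrow $\T(S)\times\T(\overline{S})\to\T(\W_\rho^{I})$ to be $f$ composed with the left vertical arrow and with the inverse of Bers' map makes the diagram commute tautologically, and all four maps are holomorphic: Bers' map is biholomorphic, the left vertical arrow is holomorphic, and $f$ is biholomorphic onto its image with the complex structure furnished by \autoref{prop:local-surjectivity}. As indicated above, the single step that requires real care is securing the $1$-connectedness of $\underline{\mathcal{U}}$ in (ii).
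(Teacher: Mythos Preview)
Your proposal is correct and follows essentially the same route as the paper's proof: invoke \autoref{cor:surface-completeness} for the initial $\mathcal{U}$, shrink to obtain a $1$-connected quotient using the contractibility of the $G$-quasi-Fuchsian and $G$-Hitchin loci (\autoref{prop:GQF}, \autoref{prop:GHIT}), apply \autoref{thm:uniformization} for (iii), and define the right vertical arrow of (iv) so that the square commutes. You supply more detail than the paper does at two points---you verify explicitly that the nonemptiness hypothesis of \autoref{cor:surface-completeness} is automatic via $4$-smallness, and you spell out the shrinking step with tubular neighborhoods and van Kampen---whereas the paper simply writes ``upon shrinking $\mathcal{U}$ we may assume $\underline{\mathcal{U}}$ is $1$-connected.''
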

\begin{proof}
The existence of $\mathcal{U}$ is guaranteed by \autoref{cor:surface-completeness}.  By \autoref{prop:GQF} and \autoref{prop:GHIT}, the spaces of conjugacy classes of $G$-quasi-Fuchsian and $G$-Hitchin homomorphisms are $1$-connected (even contractible), so upon shrinking $\mathcal{U}$ we may assume $\underline{\mathcal{U}}$ is $1$-connected.  Then, \autoref{cor:surface-completeness} in conjunction with \autoref{thm:uniformization} implies $(3).$   

Finally, since $\underline{\mathcal{QF}}_{S}\simeq \T(S) \times \T(\overline{S}) $, the right vertical arrow in $(4)$ is uniquely defined so that the diagram commutes.  
\end{proof}
The above result shows that the simultaneous uniformization map admits an analytic continuation to the setting of homomorphisms $\Gamma\rightarrow G.$
We leave unanswered the important problem of understanding the maximal domain of definition of this analytic continuation.

\section{Uniform lattices in \texorpdfstring{$\textnormal{SO}_{0}(n,1)$}{SO(n,1)}}
\label{sec:son1}

We close the paper with a brief discussion of another class of interesting examples.  Let $\Gamma<\textnormal{SO}_{0}(n,1)$ be a torsion free cocompact lattice considered as a subgroup of $\PSL(n+1, \C)$ via the natural inclusion.  Let $\F_{1,n}$ be the flag variety consisting of flags $\ell\subset H\subset \C^{n+1}$ where $\ell$ is a line and $H$ is a hyperplane.  Then, $\PSL(n+1, \C)$ acts transitively on $\F_{1,n}$ and there is a parabolic subgroup $P_{1,n}<\PSL(n+1, \C)$ such that $\PSL(n+1, \C)/P_{1,n}\simeq \F_{1,n}.$ 

It is straightforward to verify that $\rho: \Gamma\rightarrow \PSL(n+1, \C)$ is $P_{1,n}$-Anosov.  Indeed, the limit curve $\xi_{\rho}: \partial\Gamma\rightarrow \F_{1,n}$ is obtained as follows.  The group $\Gamma$ acts on the hyperbolic space $\mathbb{H}^{n}$ and the limit set (in the sense of accumulation set of an orbit) gives a homeomorphism
$\partial \Gamma\simeq \partial \mathbb{H}^{n}\simeq S^{n-1}.$  

Hyperbolic space can be viewed in its projective model $\mathbb{H}^{n}\subset \mathbb{RP}^{n},$ and taking tangent hyperplanes to the boundary $(n-1)$-sphere gives a map $\partial \Gamma\rightarrow \F_{1,n-1}(\R)\subset \F_{1,n-1}$ where $\F_{1,n-1}(\R)$ is the real points of the complex projective variety $\F_{1,n-1}.$ This defines the limit curve $\xi_{\rho}: \partial \Gamma\rightarrow \F_{1,n-1}.$  The quotient of $\mathbb{H}^{n}\subset \mathbb{RP}^{n}$ is an example of a strictly convex real projective manifold.

Let $\F$ be any other flag variety of $\PSL(n+1, \C)$ and $I$ be a balanced ideal of type $(P_{1,n}, \F).$  We note that if $\F$ is the variety of complete flags, there always exists such an ideal (see \cite{DS20} for a discussion of ideals in this setting).  Then, we obtain a  domain $\Omega_{\rho}^{I}\subset \F$ and, when this domain is non-empty, a compact quotient manifold $\W_{\rho}^{I}.$  

By \autoref{thm:pent} we have a diagram:
\begin{equation}
\begin{tikzcd}
&  Z^1(\Gamma,\sl(n+1,\C)_\rho) \ar[rd,"\KS_\rho"] \ar[d] \\
0 \ar[r] &  H^1(\Gamma,H^0(\Omega_{\rho}^{I}, \Theta_{\Omega_\rho^{I}})) \ar[r] & H^1(\W_{\rho}^{I},\Theta_{\W_\rho^{I}}) \ar[r] & H^0(\Gamma, H^1(\Omega_{\rho}^{I}, \Theta_{\Omega_\rho}^{I})).
\end{tikzcd}
\end{equation}

If $n\geq 4,$ there always exists some flag variety $\F$ and a balanced ideal $I$ of type $(P_{1,n-1}, \F)$ such that $(\rho, I)$ is $4$-small, and for such a pair we obtain an isomorphism
$H^{1}(\Gamma, \sl(n+1,\C)_\rho)\simeq H^{1}(\W_{\rho}^{I}, \Theta_{\W_{\rho}^{I}}).$
However, the question of whether $H^{1}(\Gamma, \sl(n+1,\C)_\rho)$ is non-trivial is very delicate.  If the closed hyperbolic manifold $\Gamma\backslash \mathbb{H}^{n}$ contains a totally geodesic hypersurface, then there exist non-trivial deformations given by a bending procedure, and hence the corresponding complex manifold $\W_{\rho}^{I}$ will have non-trivial complex deformations.  

Meanwhile, in the absence of a totally geodesic hypersurface, there are many examples where $H^{1}(\Gamma, \sl(n+1,\C)_\rho)=\{0\};$ this has been analyzed most thoroughly for $n=3,$ but no recognizable pattern has emerged (see \cite{CLM18} for a detailed discussion).
 We remark that this study essentially reduces to the study of deformations of the strictly convex real projective manifold given by the $\Gamma$-action on $\mathbb{H}^{n}\subset \mathbb{RP}^{n}.$  

Thus we obtain a collection of complex manifolds labeled by the data $(\Gamma, I)$, some of which are infinitesimally rigid, and some of which admit nontrivial first-order deformations of complex structure.

\appendix
\section{Appendix}
\label{app:homological}

The homological algebra appearing in \autoref{sec:ks} is entirely classical, but among geometric topologists interested in Anosov homomorphisms, it is probably less well-known.
This appendix provides additional detail on one aspect: the case of the Grothendieck spectral sequence used in the proof of \autoref{thm:pent}.
The results here are particular examples of those in the original paper of Grothendieck \cite{GRO57}.

Let $p: X\rightarrow Y$ be a regular covering of complex manifolds with deck group $\Gamma$ and $\mathcal{L}$ a $\Gamma$-equivariant locally free sheaf on $X.$  There is a canonical locally free sheaf $\mathcal{L}^{\Gamma}$ on $Y$ such that $p^{\star}\mathcal{L}^{\Gamma}\simeq \mathcal{L}$; we will call this the \emph{descent} of $\mathcal{L}$ to $Y$.  

Next, the Dolbeault complex  
is the complex
\begin{align}
A^{0}(X, \mathcal{L})\xrightarrow{\overline{\partial}} A^{0,1}(X, \mathcal{L})\rightarrow ...
\end{align}
of smooth $(0,q)$-forms on $X$ with values in the associated holomorphic vector bundle.  Since $\Gamma$ acts holomorphically on $X,$ the $\C$-vector spaces $A^{0,q}(X, \mathcal{L})$ are $\Gamma$-modules via pullback of forms, and thus 
\begin{align}
A^{0}(X, \mathcal{L})\xrightarrow{\overline{\partial}} A^{0,1}(X, \mathcal{L})\xrightarrow{\overline{\partial}} ...
\end{align}
is an object of the abelian category of complexes of $\Gamma$-modules.  Taking $\Gamma$-invariants defines a functor from this category to the category of complexes of $\C$-vector spaces which is left exact, and the right derived functors define the (hyper)-cohomology groups $\mathbb{H}^{i}(\Gamma, A^{0, \param}(X, \mathcal{L})).$  
\begin{thm}
Let $\mathcal{L}^{\Gamma}$ be the descent of $\mathcal{L}$ to $Y.$ Then, there are a canonical isomorphisms $\mathbb{H}^{k}(\Gamma, A^{0, \param}(X, \mathcal{L}))\simeq H^{k}(Y, \mathcal{L}^{\Gamma})$ for every $k\geq 0.$
\end{thm}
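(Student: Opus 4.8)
The plan is to identify both sides with the total cohomology of a suitable double complex, and then compute that total cohomology in two ways using the two spectral sequences of the double complex. Concretely, I would take the double complex
\[
X^{p,q} := C^p(\Gamma, A^{0,q}(X,\mathcal{L})),
\]
where $C^\param(\Gamma,-)$ denotes the inhomogeneous cochain complex computing group cohomology, the vertical differential is $\dbar$, and the horizontal differential is the group-cohomology differential $d_\Gamma$. Since the spaces $A^{0,q}(X,\mathcal{L})$ are $\C[\Gamma]$-modules (which need not be injective, so one genuinely needs hypercohomology here), the cohomology of the total complex $\Tot(X)$ is by definition the hypercohomology $\mathbb{H}^\param(\Gamma, A^{0,\param}(X,\mathcal{L}))$; this is the statement that the inhomogeneous cochain functor $C^\param(\Gamma,-)$, applied columnwise, computes the right derived functors of $(-)^\Gamma$ on complexes. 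So the left-hand side of the theorem is $H^k(\Tot X)$, and the content is to show $H^k(\Tot X)\simeq H^k(Y,\mathcal{L}^\Gamma)$.

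First I would run the \emph{horizontal} spectral sequence $\lh E_r^{p,q}(X)$, whose $E_1$ page is group cohomology applied to the fixed $q$:
\[
\lh E_1^{p,q}(X) = H^p\bigl(\Gamma, A^{0,q}(X,\mathcal{L})\bigr).
\]
The key computation is that $A^{0,q}(X,\mathcal{L})$ is an \emph{acyclic} (indeed induced-type) $\Gamma$-module: because $p:X\to Y$ is a regular covering, any $(0,q)$-form with values in $\mathcal{L}$ is determined by its restriction to a fundamental domain, and smooth sections over $X$ form a module coinduced from smooth sections over small open sets in $Y$. Using a $\Gamma$-invariant partition of unity subordinate to the covering, one shows $H^p(\Gamma, A^{0,q}(X,\mathcal{L})) = 0$ for $p>0$, while $H^0(\Gamma, A^{0,q}(X,\mathcal{L})) = A^{0,q}(X,\mathcal{L})^\Gamma \simeq A^{0,q}(Y,\mathcal{L}^\Gamma)$ (the descent isomorphism $p^\star\mathcal{L}^\Gamma\simeq\mathcal{L}$ identifies $\Gamma$-invariant forms upstairs with forms downstairs). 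Hence $\lh E_1$ is concentrated in the column $p=0$, the spectral sequence degenerates, and $H^k(\Tot X)\simeq H^k\bigl(A^{0,\param}(Y,\mathcal{L}^\Gamma),\dbar\bigr)$. By the Dolbeault theorem, this last group is $H^k(Y,\mathcal{L}^\Gamma)$, which finishes the proof. (One could also run the \emph{vertical} spectral sequence as a consistency check: its $E_1$ page is $C^p(\Gamma, H^q(X,\mathcal{L}))$, giving $\lv E_2^{p,q} = H^p(\Gamma, H^q(X,\mathcal{L}))$ — this is the Cartan–Leray / Hochschild–Serre spectral sequence converging to $H^\param(Y,\mathcal{L}^\Gamma)$, and in particular reproves the theorem as an identification of abutments.)

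I expect the main obstacle to be the acyclicity claim in the previous paragraph: verifying cleanly that $A^{0,q}(X,\mathcal{L})$ has vanishing higher group cohomology. The cleanest route is to exhibit these modules as coinduced: choosing an open cover $\{V_\alpha\}$ of $Y$ over which $p$ is trivial and $\mathcal{L}^\Gamma$ is trivialized, the sheaf of smooth $\mathcal{L}$-valued $(0,q)$-forms on $p^{-1}(V_\alpha)$ is $\Gamma$-equivariantly $\mathrm{Coind}_1^\Gamma$ of the corresponding forms on $V_\alpha$, and coinduced modules are acyclic for group cohomology by Shapiro's lemma; a $\Gamma$-invariant partition of unity glues these local statements. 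Everything else — the descent isomorphism, the identification of the total cohomology with hypercohomology, the Dolbeault theorem — is standard and needs only to be invoked. I would also remark that this is precisely the instance of the Grothendieck spectral sequence for the composite functor $(-)^\Gamma\circ \Gamma(X,-)$ applied to the $\Gamma$-equivariant sheaf $\mathcal{L}$, with the Dolbeault resolution playing the role of an acyclic resolution adapted to the outer functor, so no new ideas beyond \cite{GRO57} are required.
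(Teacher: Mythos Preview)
The paper does not actually prove this theorem: the appendix states both theorems without argument, simply remarking that ``the results here are particular examples of those in the original paper of Grothendieck \cite{GRO57}.'' Your proposal is correct and supplies precisely the argument the paper defers to that reference. The double complex $C^p(\Gamma, A^{0,q}(X,\mathcal{L}))$ you write down is the same one the paper introduces (as $E_0^{p,q}$) in the sentence following the theorem, and your identification of its total cohomology with the hypercohomology is stated as the first part of the paper's next theorem (\autoref{thm:applied-groth}).

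Your route through the horizontal spectral sequence, degenerating via the acyclicity of $A^{0,q}(X,\mathcal{L})$ as a $\Gamma$-module, is the standard one and is exactly the mechanism behind the Grothendieck spectral sequence for the composite $(-)^\Gamma \circ \Gamma(X,-)$ that the paper is invoking. Your diagnosis that the acyclicity step is the only place requiring real work is accurate; the partition-of-unity argument you sketch is the usual one (one can phrase it as: a smooth partition of unity on $Y$ subordinate to an evenly-covered open cover pulls back to give a $\Gamma$-equivariant splitting of the inclusion of $A^{0,q}(X,\mathcal{L})$ into its coinduction, so the module is cohomologically trivial). The only cosmetic point is that ``coinduced'' is a slight overstatement globally---the module is a summand of a coinduced module rather than literally coinduced---but this is exactly what your partition-of-unity gluing provides, and it suffices for acyclicity.
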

Simultaneously taking the complex of inhomogeneous cochains leads to a double complex $E_{0}^{p,q}:=C^{p}(\Gamma, A^{0,q}(X, \mathcal{L}))$ whose horizontal differentials are given by the  differential on inhomogeneous group cochains 
$d_{\Gamma}: C^{p}(\Gamma, A^{0,q}(X, \mathcal{L}))\rightarrow C^{p+1}(\Gamma, A^{0,q}(X, \mathcal{L}))$ and vertical differentials are given by $\overline{\partial}: C^{p}(\Gamma, A^{0,q}(X, \mathcal{L}))\rightarrow C^{p}(\Gamma, A^{0,q+1}(X, \mathcal{L})).$  

\begin{thm}
\label{thm:applied-groth}
The degree $k$ cohomology of the total complex of $E_{0}^{p,q}:=C^{p}(\Gamma, A^{0,q}(X, \mathcal{L}))$ is canonically isomorphic to 
$\mathbb{H}^{k}(\Gamma, A^{0, \param}(X, \mathcal{L})).$ 

Therefore, there is a spectral sequence with $E_{2}^{p,q}:=H^{p}(\Gamma, H^{q}(X, \mathcal{L}))$ which converges to
$$\mathbb{H}^{p+q}(\Gamma, A^{0, \param}(X, \mathcal{L}))\simeq H^{p+q}(Y, \mathcal{L}^{\Gamma}).$$   
\end{thm}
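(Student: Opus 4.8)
The plan is to extract both statements from the two spectral sequences of the first-quadrant double complex $E_0^{p,q} = C^p(\Gamma, A^{0,q}(X,\mathcal{L}))$, whose vertical differential is $\dbar$ and whose horizontal differential is the inhomogeneous group-cochain differential $d_\Gamma$. Since the double complex is concentrated in the first quadrant, both its vertical and horizontal spectral sequences converge to the cohomology $H^{\param}$ of the total complex $\Tot E_0$.

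First I would identify $H^k(\Tot E_0)$ with the hyperderived functor $\mathbb{H}^k(\Gamma, A^{0,\param}(X,\mathcal{L}))$. The key point is that $\Tot E_0$ is the standard bar-resolution model for $\mathbb{R}\bigl((-)^{\Gamma}\bigr)$ applied to the bounded-below complex $A^{0,\param}(X,\mathcal{L})$ of $\Gamma$-modules: for a single $\Gamma$-module $M$ the augmented homogeneous cochain complex $0 \to M \to C^0_{\mathrm{hom}}(\Gamma,M) \to C^1_{\mathrm{hom}}(\Gamma,M) \to \cdots$ is an exact resolution of $M$ by coinduced, hence $(-)^{\Gamma}$-acyclic, $\Gamma$-modules whose $\Gamma$-invariants form the inhomogeneous complex $C^{\param}(\Gamma, M)$. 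Carrying out this construction termwise in $q$ and totalizing produces a double complex of $(-)^{\Gamma}$-acyclic modules resolving $A^{0,\param}(X,\mathcal{L})$, so taking invariants and passing to cohomology computes $\mathbb{H}^{\param}(\Gamma, A^{0,\param}(X,\mathcal{L}))$; by the preceding theorem (the isomorphism $\mathbb{H}^k(\Gamma, A^{0,\param}(X,\mathcal{L})) \simeq H^k(Y, \mathcal{L}^{\Gamma})$) this is in turn $H^{\param}(Y, \mathcal{L}^{\Gamma})$. This gives the first assertion.

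For the spectral sequence I would run the \emph{vertical} spectral sequence of $E_0$, computing the cohomology of the columns first. As $M \mapsto C^p(\Gamma, M) = \mathrm{Maps}(\Gamma^p, M)$ is an exact functor of $M$, it commutes with cohomology, so $\lv E_1^{p,q}(E_0) = C^p\bigl(\Gamma, H^q_{\dbar}(A^{0,\param}(X,\mathcal{L}))\bigr)$; the Dolbeault theorem — applicable since $X$ is a paracompact complex manifold and the sheaves of $\mathcal{L}$-valued $(0,q)$-forms are fine, hence acyclic — identifies this with $C^p(\Gamma, H^q(X,\mathcal{L}))$ carrying its natural $\Gamma$-module structure. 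The $d_1$ differential is induced by $d_\Gamma$, whence $\lv E_2^{p,q}(E_0) = H^p(\Gamma, H^q(X,\mathcal{L}))$; by the first part this first-quadrant spectral sequence converges to $H^{\param}(\Tot E_0) \simeq \mathbb{H}^{\param}(\Gamma, A^{0,\param}(X,\mathcal{L})) \simeq H^{\param}(Y, \mathcal{L}^{\Gamma})$, as claimed.

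The only step requiring genuine care — the rest being routine bookkeeping with first-quadrant double complexes — is the identification of $H^{\param}(\Tot E_0)$ with the hyperderived functor, i.e.~checking that the bar-resolution double complex really computes $\mathbb{R}\bigl((-)^{\Gamma}\bigr)$ of a \emph{complex} and not merely of a single module. An equivalent route, which also cross-checks the answer, is to run the \emph{horizontal} spectral sequence instead: a $\Gamma$-equivariant smooth partition of unity on $X$ indexed by $\Gamma$ (available since $X \to Y$ is a regular covering with $Y$ paracompact) provides a contracting homotopy showing that each $A^{0,q}(X,\mathcal{L})$ is a $\Gamma$-acyclic module, so $\lh E_1^{p,q}(E_0) = H^p(\Gamma, A^{0,q}(X,\mathcal{L}))$ vanishes for $p > 0$ and equals $A^{0,q}(Y,\mathcal{L}^{\Gamma})$ for $p = 0$; the sequence degenerates at $\lh E_2$ and gives $H^{\param}(\Tot E_0) = H^{\param}_{\dbar}(A^{0,\param}(Y,\mathcal{L}^{\Gamma})) = H^{\param}(Y,\mathcal{L}^{\Gamma})$, matching the preceding theorem.
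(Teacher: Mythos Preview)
Your argument is correct. The paper itself does not supply a proof of \autoref{thm:applied-groth}; it records the statement in the appendix and refers to Grothendieck's original paper \cite{GRO57} for the general composite-functor spectral sequence, of which this is a specialization. So there is no ``paper's own proof'' to compare against in detail.

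Your write-up is essentially the standard unpacking of Grothendieck's construction in this concrete setting, and it matches the way the paper uses the result in \autoref{sec:ks}. Two small remarks. First, your alternative route via the horizontal spectral sequence---using a $\Gamma$-equivariant partition of unity to show each $A^{0,q}(X,\mathcal{L})$ is $(-)^{\Gamma}$-acyclic---is exactly the mechanism behind the identification $\lh E_1^{0,q}(X)\simeq A^{0,q}(\W_\rho^{I},\Theta_{\W_\rho^{I}})$ that the paper invokes without comment in the proof of \autoref{thm:pent}; making it explicit is helpful. Second, the phrase ``degenerates at $\lh E_2$'' is fine but slightly understated: once $\lh E_1$ is concentrated in the column $p=0$, the remaining differential \emph{is} $\dbar$ on $A^{0,\param}(Y,\mathcal{L}^{\Gamma})$, and all higher differentials vanish for support reasons, so $\lh E_2=\lh E_\infty$ immediately.
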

For our applications, the regular covering is $\Omega_{\rho}^{I}\rightarrow \W_{\rho}^{I},$ the $\Gamma$-equivariant sheaf is $\Theta_{\Omega_{\rho}^{I}},$ and its descent is $\Theta_{\Omega_{\rho}^{I}}^{\Gamma}=\Theta_{\W_{\rho}^{I}}.$  Hence, we obtain the spectral sequence with $E_2$-page $H^{p}(\Gamma, H^{q}(\Omega_{\rho}^{I}, \Theta_{\Omega_{\rho}^{I}}))$ converging to $H^{p+q}(\W_{\rho}^{I}, \Theta_{\W_{\rho}^{I}})$ that is used in \autoref{sec:ks}.

\bibliographystyle{habbrv}
\bibliography{ds2}

\vspace{1.5em}

\noindent Department of Mathematics, Statistics, and Computer Science\\
University of Illinois at Chicago\\
\texttt{david@dumas.io}\\

\medskip

\noindent Mathematisches Institut\\
Ruprecht-Karls-Universit\"{a}t Heidelberg\\
\texttt{asanders@mathi.uni-heidelberg.de}\\

\end{document}